\documentclass{amsart}
\usepackage{amsmath,amsfonts,amsthm,amssymb,indentfirst,epic,url,graphics,needspace}

\setlength{\textwidth}{6.5in}
\setlength{\textheight}{9.00in}
\setlength{\evensidemargin}{0in}
\setlength{\oddsidemargin}{0in}
\setlength{\topmargin}{-.5in}
\sloppy

\setlength{\mathsurround}{.167em}

\newtheorem{theorem}{Theorem}
\newtheorem{lemma}[theorem]{Lemma}

\newtheorem{proposition}[theorem]{Proposition}
\newtheorem{definition}[theorem]{Definition}
\newtheorem{question}[theorem]{Question}

\newtheorem{convention}[theorem]{Convention}

\newcommand{\Z}{\mathbb{Z}}
\newcommand{\N}{\mathbb{N}}
\newcommand{\Pm}{\mathcal{P}}
\renewcommand{\r}{\mathrm}
\newcommand{\supp}{\r{supp}}
\newcommand{\cl}{\r{cl}}
\newcommand{\po}{\rightsquigarrow}

\newcommand{\lang}{\begin{picture}(5,7)
\put(1.2,2.5){\rotatebox{45}{\line(1,0){6.0}}}
\put(1.2,2.5){\rotatebox{315}{\line(1,0){6.0}}}
\end{picture}\kern.16em}
\newcommand{\rang}{\kern.1em\begin{picture}(5,7)
\put(.1,2.5){\rotatebox{135}{\line(1,0){6.0}}}
\put(.1,2.5){\rotatebox{225}{\line(1,0){6.0}}}
\end{picture}}

\raggedbottom

\begin{document}

\title%
[Embeddability of rings in division rings]%
{Some results relevant to embeddability of rings\\
(especially group algebras) in division rings}
\thanks{Archived at \url{http://arXiv.org/abs/1812.06123}\,.
After publication, any updates, errata, related references,
etc., found will be recorded at
\url{http://math.berkeley.edu/~gbergman/papers/}\,.
}

\subjclass[2010]{Primary: 06F16, 16K40, 20C07.
Secondary: 05B35, 06A05, 06A06, 43A17.}
\keywords{%
homomorphisms of rings to division rings;
coherent matroidal structures on free modules;
group algebras of right-ordered groups;
prime matrix ideals.
}

\author{George M. Bergman}
\address{University of California\\
Berkeley, CA 94720-3840, USA}
\email{gbergman@math.berkeley.edu}

\begin{abstract}
P.\,M.\,Cohn showed in 1971 that given a ring $R,$ to describe,
up to isomorphism, a division ring $D$ generated by a homomorphic
image of $R$ is equivalent to specifying the set of square matrices
over $R$ which map to singular matrices over $D,$ and he determined
precisely the conditions that such a set of matrices must satisfy.
The present author later developed another version of this data,
in terms of closure operators on free $\!R\!$-modules.

In this note, we examine the latter concept further, and show how
an $\!R\!$-module $M$ satisfying certain conditions
can be made to induce such data.
In an appendix we make some observations on Cohn's
original construction,
and note how the data it uses can similarly be induced by
an appropriate sort of $\!R\!$-module.

Our motivation is the longstanding question of whether,
for $G$ a right-orderable group and $k$ a field, the group
algebra $kG$ must be embeddable in a division ring.
Our hope is that the right $\!kG\!$-module \mbox{$M=k((G))$}
might induce a closure operator of the required sort.
We re-prove a partial result in this direction due to N.\,I.\,Dubrovin,
note a plausible generalization thereof which would give the
desired embedding, and also sketch some
thoughts on other ways of approaching the problem.
\end{abstract}
\maketitle

\section{Background}\label{S.intro}
A.\,I.\,Mal'cev~\cite{Malcev} and,
independently, B.\,H.\,Neumann~\cite{BHN} showed that
if $G$ is a group given with a {\em \mbox{$\!2\!$-sided}-invariant}
ordering, that is, a total ordering $\leq$ such that for
all $e,\,f,\,g,\,h\in G,$
\begin{equation}\begin{minipage}[c]{35pc}\label{d.2-sided}
$f\leq g\ \implies\  ef\leq eg$\ \ and\ \ $fh\leq gh,$
\end{minipage}\end{equation}
and if, for $k$ a field, we let $k((G))$ denote the
set of formal $\!k\!$-linear combinations
$\sum_{g\in G}\,\alpha_g\,g$ of elements of $G$ whose support,
\begin{equation}\begin{minipage}[c]{35pc}\label{d.supp}
$\supp(\sum_{g\in G}\,\alpha_g\,g)\ =\ \{g\in G\mid \alpha_g\neq 0\},$
\end{minipage}\end{equation}
is well-ordered, then $k((G))$ can be made a ring
in a natural way; in fact, a division ring.
This division ring contains the group algebra $kG,$
as the subalgebra of elements with finite support.

Now suppose $G$ is merely given with a {\em right-invariant} ordering,
that is, a total ordering satisfying
\begin{equation}\begin{minipage}[c]{35pc}\label{d.rt-ord}
$f\leq g\ \implies\  fh\leq gh,$
\end{minipage}\end{equation}
and again let $k((G))$ be the set of
formal $\!k\!$-linear combinations of elements of $G$ whose supports
are well-ordered.
This time we cannot extend the ring structure of $kG$ to $k((G))$
in any evident way: if we
try to take the formal product $ab$ of elements $a,b\in k((G)),$
the one-sided invariance of the ordering is not enough to guarantee
that only finitely many occurrences of each $g\in G$
arise when we multiply $ab$ out; and even when that is true, for
instance, when $a$ is a member of $G,$ the support of
the resulting formal sum $ab$ may fail to be well-ordered.

However, by~\eqref{d.rt-ord} we can make $k((G))$ a
right $\!kG\!$-module; and this module has been shown to
have a property that is very encouraging with respect to the possibility
of embedding $kG$ in a division ring:
Dubrovin~\cite{Dubrovin} shows that
every nonzero element of $kG$ acts invertibly on $k((G)).$

But it is not clear how to go further: if we form
the ring of $\!k\!$-linear endomorphisms of $k((G))$ generated
by the actions of the elements of $kG$ and their inverses, there is
no evident way to prove invertibility of
all nonzero elements of this larger ring;
so we are not in a position to iterate the adjunction of inverses.
Indeed, the question of whether group rings of all right-orderable
groups are embeddable in division rings is listed in \cite{Kourovka}
as Problem~1.6, attributed to A.\,I.\,Mal'cev, and dating from the first
(1965) edition of that collection of open problems in group theory.
(The still more general question of whether group rings
of all torsion-free groups embed in division
rings also appears to be open [{\em ibid.}, Problems~1.3 and 1.5].)

P.\,M.\,Cohn \cite{PMC_1971}-\cite{FRR+} showed that a homomorphism
from a not necessarily
commutative ring $R$ into a division ring $D$ can be studied
in terms of the set of square matrices over $R$ that become
singular over $D.$
He showed that this set of matrices determines the structure of
the division subring of $D$ generated by the image of $R,$ and
gave criteria for a set of matrices to arise in this
way (recalled in~\S\ref{S.PMC_versions} below);
he named sets of matrices satisfying those criteria
``prime matrix ideals''.
Subsequently, the present author showed in \cite{sfd_fr_mtrd}
that the same data can be described in terms
of closure operators on free $\!R\!$-modules of finite rank
(details recalled in~\S\ref{S.mtrd} below).

Something I did not notice then is that a structure with {\em most} of
the properties defining Cohn's prime matrix ideals, or
my classes of closure operators, is determined by
any right or left $\!R\!$-module $M.$
In~\S\ref{S.M} we develop these observations for the closure
operator construction, and describe the additional properties
that $M$ must have for the closure operator so induced
to satisfy {\em all} the required conditions.

We then give, in \S\S\ref{S.bij}-\ref{S.bij_via_Higman},
a slightly modified proof of the result of Dubrovin cited above,
and in~\S\S\ref{S.further?}-\ref{S.either/or} look at a plausible
strengthening of that
result which would lead to the conclusion that $k((G))$ has the
module-theoretic properties needed to induce, by the results
of \S\ref{S.M}, an embedding in a division ring.
In \S\S\ref{S.G*}-\ref{S.variants} we discuss some
other ideas that might be of use in tackling this problem.

Finally, in an appendix, \S\ref{S.PMC_versions}, we look at Cohn's
concept of a prime matrix ideal.
We note a discrepancy between the definition of that concept
used in most of his works, and a weaker definition
given in~\cite{SF}, and sketch an apparent difficulty with his
reasoning about the latter version.
But we record an argument supplied by Peter Malcolmson,
which shows that adding a
small additional condition to the weaker definition renders
it equivalent to the other, and show that, so modified, it
allows us to obtain prime matrix ideals from certain
$\!R\!$-modules $M$ in a way parallel to our results on
closure operators.

Let me remark, regarding
the concepts of $\!2\!$-sided and $\!1\!$-sided
orderability of groups, that though the former seems ``intrinsically''
more natural, the latter has considerable ``extrinsic'' naturality:
A group is right orderable if and only if it can be embedded in the
group of order-automorphisms of a totally ordered set, written
as acting on the right
\cite[Proposition~29.5]{Darnel}.
Here ``only if'' is clear, using the group's action on itself.
To see ``if'' we need, for any totally ordered set $A,$ a
way of right-ordering $\r{Aut}(A).$
To get this, index $A$ as $\{a_i\mid i\in\kappa\}$ for some ordinal
$\kappa,$ and for $s\neq t\in\r{Aut}(A),$ let $s\leq t$ if and only if
for the {\em least} $i$ such that $s_i\neq t_i,$ we have $s_i<t_i.$
(In this argument we could, in fact, merely let the $a_i$ run over
an order-dense subset of $A.$
Since familiar totally ordered sets such as the real line tend to
have explicit {\em countable} order-dense subsets, this construction can
be performed for such groups without using the axiom of choice
to get the desired indexing.)

Still another fascinating
characterization of the one-sided orderable groups
is that they are those groups embeddable in lattice-ordered
groups (groups with a
{\em partial} ordering under which they are lattices, and
which is $\!2\!$-sided-invariant)
\cite[Corollary~29.8]{Darnel}.

\section{Conventions}\label{S.cnvntns}

Throughout this note, rings are associative
with $1,$ ring homomorphisms respect $1,$ and modules are unital.
If $M$ is a right $\!R\!$-module and $X$ a subset of $M,$
then $XR$ denotes the submodule of $M$ generated by $X,$
i.e., the set of finite sums $\sum x_i\,r_i$ with $x_i\in X,$
$r_i\in R.$
These include the empty sum, $0;$ hence if $X=\emptyset,$ then
$XR$ is the zero submodule.

\section{Closure structures on free modules}\label{S.mtrd}

We review here the result of~\cite{sfd_fr_mtrd}
relating homomorphisms of a ring $R$ into division rings with
certain closure operators on free $\!R\!$-modules of finite rank.
Recall

\begin{definition}\label{D.closure}
If $X$ is a set, then a {\em closure operator} on $X$ means a
map $\cl$ from subsets of $X$ to subsets of $X,$ such that
for all $S,\,T\subseteq X,$
\begin{equation}\begin{minipage}[c]{35pc}\label{d.cl_>>}
$S\ \subseteq\ T\ \implies\ \cl(S)\ \subseteq\ \cl(T).$
\end{minipage}\end{equation}
\begin{equation}\begin{minipage}[c]{35pc}\label{d.cl_>}
$\cl(S)\ \supseteq\ S.$
\end{minipage}\end{equation}
\begin{equation}\begin{minipage}[c]{35pc}\label{d.cl_idpt}
$\cl(\cl(S))\ =\ \cl(S).$
\end{minipage}\end{equation}

A closure operator $\cl$ will be called {\em finitary} if
for all $S\subseteq X,$
\begin{equation}\begin{minipage}[c]{35pc}\label{d.cl_fin}
$\cl(S)\ =\ \bigcup_{\,\mbox{\em\scriptsize finite}
\ S_0\subseteq S}\ \cl(S_0).$
\end{minipage}\end{equation}
\end{definition}

(The most common term for a closure operator
satisfying~\eqref{d.cl_fin} is ``algebraic'', because that condition
is frequent in algebraic contexts.
But ``finitary'' seems more to the point.)

Now suppose $R$ is a ring,
and $f:R\to D$ a homomorphism into a division ring.
For every $n\geq 0,$ let us
define a closure operator $\cl_{R^n}$ on $R^n$
by looking at the induced map $f:R^n\to D^n,$ and sending each
$S\subseteq R^n$ to the inverse image
in $R^n$ of the right span over $D$ of the image of $S$ in $D^n.$
In writing this formally, it will be convenient to use the
same letter $f$ that denotes our homomorphism $R\to D$ to
denote also the induced homomorphisms of right $\!R\!$-modules,
$R^n\to D^n,$ for all $n\geq 0.$
Then our definition says that
\begin{equation}\begin{minipage}[c]{35pc}\label{d.cl}
$\cl_{R^n}(S)\ =\ f^{-1}(f(S)D)$\hspace{1em}
for $S\subseteq R^n.$
\end{minipage}\end{equation}

It is not hard to verify that this construction satisfies
the following five conditions for all $m,n\geq 0.$
\Needspace{4\baselineskip}
\begin{equation}\begin{minipage}[c]{35pc}\label{d.cl_cl}
$\cl_{R^n}$ is a closure operator on
the underlying set of the right
$\!R\!$-module $R^n,$ whose closed subsets are $\!R\!$-submodules.
\end{minipage}\end{equation}
\begin{equation}\begin{minipage}[c]{35pc}\label{d.cl_proper}
For all $n>0,$ $\cl_{R^n}(\emptyset)$ is a proper submodule of $R^n.$
\end{minipage}\end{equation}
\begin{equation}\begin{minipage}[c]{35pc}\label{d.cl_homs}
For every homomorphism of right $\!R\!$-modules $h:R^m\to R^n$
and every $\!\cl_{R^n}\!$-closed submodule $A\subseteq R^n,$
the submodule $h^{-1}(A)\subseteq R^m$ is $\!\cl_{R^m}\!$-closed.
\end{minipage}\end{equation}
\begin{equation}\begin{minipage}[c]{35pc}\label{d.cl_exch}
The closure operator $\cl_{R^n}$ has the {\em exchange property},
namely, for $S\subseteq R^n$ and $t,u\in R^n,$
if $u\notin\cl_{R^n}(S)$ but $u\in\cl_{R^n}(S\cup\{t\}),$
then $t\in\cl_{R^n}(S\cup\{u\}).$
\end{minipage}\end{equation}
\begin{equation}\begin{minipage}[c]{35pc}\label{d.cl_finitary}
The closure operator $\cl_{R^n}$ is finitary.
\end{minipage}\end{equation}

In~\cite{sfd_fr_mtrd}, I named families of closure operations
$(\cl_{R^n})_{n\geq 0}$ satisfying~\eqref{d.cl_cl}-\eqref{d.cl_finitary}
``proper coherent matroidal structures on free $\!R\!$-modules''
(``matroid'' being the standard term for a set $X$ given with a
finitary closure operator $\r{cl}$ having
the exchange property of~\eqref{d.cl_exch}),
and it was shown that every such structure determines a
homomorphism $f$ of $R$ into a division ring $D$
which induces the given operators via~\eqref{d.cl}, and which is,
up to embeddings of division rings, the unique such homomorphism.
By~\eqref{d.cl},
the kernel of that homomorphism is $\cl_{R}(\emptyset).$

Condition~\eqref{d.cl_homs} above is stated in
terms of inverse images of closed subsets.
It is also equivalent (given~\eqref{d.cl_cl}) to
a statement about {\em closures of images} of subsets, namely:
\begin{equation}\begin{minipage}[c]{35pc}\label{d.cl_homs'}
For every homomorphism of right $\!R\!$-modules $h:R^m\to R^n$
$(m,n\geq 0)$ and every subset $S\subseteq R^m,$
the submodule $h(\cl_{R^m}(S))$ of $R^n$ is
contained in $\cl_{R^n}(h(S)).$
\end{minipage}\end{equation}

Indeed, consider an arbitrary subset $S\subseteq R^m$ and
an arbitrary closed subset $A\subseteq R^n.$
Then~\eqref{d.cl_homs} is equivalent to the statement that
for any such sets, if $S\subseteq h^{-1}(A)$ then
$\cl_{R^m}(S)\subseteq h^{-1}(A),$ while~\eqref{d.cl_homs'}
is equivalent to the statement that for any such sets,
if $h(S)\subseteq A$ then $h(\cl_{R^m}(S))\subseteq A.$
These statements are clearly equivalent,
so~\eqref{d.cl_homs} and~\eqref{d.cl_homs'} are equivalent.

We remark that matroid theorists often require the underlying
sets of matroids to be finite; for instance,
this is assumed by Welsh~\cite{Welsh}, and only in his final
chapter does he discuss ways the theory can be
extended to infinite structures.
But for most algebraic applications, including those of this note,
the restriction to finite sets would be unnatural, and the
appropriate version in the infinite case is clear:  Regarding
matroids as sets with closure operators
(one of many equivalent formulations of the concept),
one should simply require that these operators be finitary, i.e.,
one should impose condition~\eqref{d.cl_finitary}.
We shall call on many results from~\cite{Welsh} in this note,
tacitly understanding that the statements we quote go over to
the infinite matroids we will be considering.
The assumption that our closure operators are finitary makes it
straightforward to deduce such statements from the corresponding
facts about finite matroids.

(The term ``matroid'' is based on the
motivating example of the linear dependence structure on the
rows or columns of a matrix over a field $K.$
From that point of view, the finiteness assumption is natural.
But such systems of rows or columns are simply
finite families of elements of a space $K^n,$ and to the
algebraist, linear dependence
is most naturally viewed as structure on that generally infinite set.)

In the situations we shall be looking at,
conditions~\eqref{d.cl_cl}-\eqref{d.cl_homs} will generally
be easy to establish.
The next lemma restricts the instances one has to verify to show
that~\eqref{d.cl_exch} also holds, and shows
that~\eqref{d.cl_finitary} is
implied by \eqref{d.cl_cl} and~\eqref{d.cl_exch}.

\begin{lemma}\label{L.<n}
Let $R$ be a ring, $n$ a nonnegative integer, and $\cl_{R^n}$
an operator on subsets of $R^n$ satisfying~\eqref{d.cl_cl}.
Then\\[.2em]
\textup{(i)}~~$\r{cl}_{R^n}$ satisfies
the exchange property~\eqref{d.cl_exch} if and only if it satisfies
the restriction of that condition to sets
$S\subseteq R^n$ of cardinality~$<n,$ i.e., the condition
\begin{equation}\begin{minipage}[c]{35pc}\label{d.cl_exch-}
For every subset $S\subseteq R^n$ of cardinality $<n,$
and every pair of elements $t,u\in R^n,$
if $u\notin\cl_{R^n}(S)$ but $u\in\cl_{R^n}(S\cup\{t\}),$
then $t\in\cl_{R^n}(S\cup\{u\}).$\vspace{.2em}
\end{minipage}\end{equation}

Moreover, if $\r{cl}_{R^n}$ does satisfy~\eqref{d.cl_exch-},
then\\[.2em]
\textup{(ii)}~\,For all $S\subseteq R^n,$
there exists $S_0\subseteq S$ of cardinality $\leq n$
such that $\cl_{R^n}(S_0)=\cl_{R^n}(S),$ and\\[.2em]
\textup{(iii)}~For $S\subseteq R^n$ whose closure is
a proper subset of $R^n,$ there exists $S_0$
as in~\textup{(ii)} of cardinality $<n.$

Hence\\[.2em]
\textup{(iv)}~If an operator $\cl_{R^n}$ satisfies~\eqref{d.cl_cl}
and~\eqref{d.cl_exch}, it also satisfies~\eqref{d.cl_finitary}.
\end{lemma}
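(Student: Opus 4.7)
The forward direction of (i) is a tautology, and the whole lemma reduces to showing that~\eqref{d.cl_cl} together with the restricted hypothesis~\eqref{d.cl_exch-} already implies (ii); from (ii), parts (iii), the converse of (i), and (iv) all follow easily. The main difficulty is that~\eqref{d.cl_exch-} only applies when the underlying set has fewer than $n$ elements, so in the Steinitz-type argument below one must choose the swaps carefully so as never to violate that bound.

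The core claim is: for $s_1, \ldots, s_k \in R^n$ satisfying $s_i \notin \cl_{R^n}(\{s_1, \ldots, s_{i-1}\})$ for every $i \leq k$ (call this an \emph{independent chain}), one has $k \leq n$, and when $k = n$, $\cl_{R^n}(\{s_1, \ldots, s_n\}) = R^n$. To prove it I would construct by induction on $i$ sets $Y_0, Y_1, \ldots, Y_k$ of cardinality $n$ with $\{s_1, \ldots, s_i\} \subseteq Y_i$ and $\cl_{R^n}(Y_i) = R^n$. Take $Y_0 = \{e_1, \ldots, e_n\}$: since any submodule of $R^n$ containing the $e_j$'s equals $R^n$, \eqref{d.cl_cl} gives $\cl_{R^n}(Y_0) = R^n$. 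Given $Y_i = \{s_1, \ldots, s_i\} \cup \{y_1, \ldots, y_{n-i}\}$ with $i < k$, the element $s_{i+1}$ lies in $\cl_{R^n}(Y_i) \setminus \cl_{R^n}(\{s_1, \ldots, s_i\})$, so there is a least $j$ with $s_{i+1} \in \cl_{R^n}(\{s_1, \ldots, s_i, y_1, \ldots, y_j\})$. The set $\{s_1, \ldots, s_i, y_1, \ldots, y_{j-1}\}$ has cardinality at most $n-1$, so~\eqref{d.cl_exch-} yields $y_j \in \cl_{R^n}(\{s_1, \ldots, s_i, y_1, \ldots, y_{j-1}, s_{i+1}\})$; putting $Y_{i+1} = (Y_i \setminus \{y_j\}) \cup \{s_{i+1}\}$ then forces $Y_i \subseteq \cl_{R^n}(Y_{i+1})$, hence $\cl_{R^n}(Y_{i+1}) = R^n$. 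A chain of length $n+1$ would land us at $Y_n = \{s_1, \ldots, s_n\}$ with closure $R^n$, contradicting $s_{n+1} \notin \cl_{R^n}(\{s_1, \ldots, s_n\})$; when $k = n$, the identity $Y_n = \{s_1, \ldots, s_n\}$ gives the second assertion directly.

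For (ii), I would build such a chain in $S$ greedily: while $S \not\subseteq \cl_{R^n}(\{s_1, \ldots, s_i\})$, pick $s_{i+1}$ in the complement. By the core claim the process halts at some $k \leq n$, and termination gives $S \subseteq \cl_{R^n}(\{s_1, \ldots, s_k\})$, so $S_0 := \{s_1, \ldots, s_k\}$ satisfies $\cl_{R^n}(S_0) = \cl_{R^n}(S)$. For (iii), if $\cl_{R^n}(S) \neq R^n$ then the core claim forbids $|S_0| = n$, hence $|S_0| < n$.

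For the converse of (i), suppose $u \notin \cl_{R^n}(S)$ and $u \in \cl_{R^n}(S \cup \{t\})$ for arbitrary $S \subseteq R^n$. Since $u$ witnesses that $\cl_{R^n}(S) \neq R^n$, by (ii) and (iii) there is $S_0 \subseteq S$ with $|S_0| < n$ and $\cl_{R^n}(S_0) = \cl_{R^n}(S)$. A routine application of~\eqref{d.cl_idpt} gives $\cl_{R^n}(S_0 \cup \{t\}) = \cl_{R^n}(S \cup \{t\})$, so $u \in \cl_{R^n}(S_0 \cup \{t\}) \setminus \cl_{R^n}(S_0)$; then~\eqref{d.cl_exch-} applied to $S_0$ delivers $t \in \cl_{R^n}(S_0 \cup \{u\}) \subseteq \cl_{R^n}(S \cup \{u\})$. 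Finally, (iv) is immediate: \eqref{d.cl_exch} specializes to \eqref{d.cl_exch-}, whence (ii) gives $\cl_{R^n}(S) = \cl_{R^n}(S_0)$ for some finite $S_0 \subseteq S$, and \eqref{d.cl_fin} follows on combining this with monotonicity~\eqref{d.cl_>>}.
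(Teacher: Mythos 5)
Your argument is correct and is essentially the paper's own proof: the same Steinitz-type replacement starting from $\{e_1,\dots,e_n\}$ and applying~\eqref{d.cl_exch-} to a minimal prefix of size $<n,$ followed by the same greedy selection for (ii), the same deduction of (iii), of the converse of (i) via a small $S_0,$ and of (iv); packaging the replacement as a separate ``core claim'' about independent chains is only a cosmetic reorganization. One tiny imprecision: $Y_{i+1}$ need not have cardinality exactly $n$ (e.g.\ if $s_{i+1}$ already lies in $Y_i),$ but your argument only ever uses $|Y_i|\le n$ and the fact that the set fed to~\eqref{d.cl_exch-} omits $y_j,$ so nothing breaks.
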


\begin{proof}
Let us first show that~\eqref{d.cl_exch-} implies~(ii) and~(iii).

Since $R^n$ is generated as an $\!R\!$-module by
the standard basis $e_1,\dots,e_n,$ and since by~\eqref{d.cl_cl},
closed subsets of $R^n$ are submodules,
we have $\cl_{R^n}(\{e_1,\dots,e_n\})=R^n.$

Now the conclusion of~(ii) that $\cl_{R^n}(S)$ is the
closure of a $\!{\leq}n\!$-element subset of $S$
is trivial if $S\subseteq \cl_{R^n}(\emptyset);$
in the contrary case, let $s_1\in S-\cl_{R^n}(\emptyset),$
and choose a subset $\{e_{i_1},\dots,e_{i_m}\}$
of $\{e_1,\dots,e_n\}$ minimal for having $s_1$ in its closure.
Thus by~\eqref{d.cl_exch-} with $\{e_{i_1},\dots,e_{i_{m-1}}\}$
in the role of $S,$ we have
$e_{i_m}\in\cl_{R^n}(\{e_{i_{m_1}},\dots,e_{i_{m-1}},s_1\});$
hence one can replace $e_{i_m}$ by $s_1$ in the
relation $\cl_{R^n}(\{e_1,\dots,e_n\})=R^n.$

If $S\not\subseteq\cl_{R^n}(\{s_1\})$
(which by the above observation can only happen if $n>1),$
then taking $s_2\in S-\cl_{R^n}(\{s_1\}),$
the corresponding argument shows that we can
replace another $e_i$ by $s_2;$ and so on.
Since there are only $n$ elements $e_i$ to be replaced,
this process must stop after $\leq n$ steps, giving a
subset of $\leq n$ elements of $S$ which (because the
process has stopped) has $S$ in its closure,
hence has the same closure as $S,$ proving~(ii).

Further, if $\cl_{R^n}(S)\neq R^n,$ this process can't
terminate with all the $e_i$ replaced by elements of $S,$
since that would imply that $S$ had
closure $R^n;$ so it must terminate
with $<n$ elements so replaced.
Again, the fact that the process has terminated means
that the set of $<n$ elements by which we have replaced
those elements has closure $\cl_{R^n}(S),$ establishing~(iii).

Now to get~(i), note that the exchange property of~\eqref{d.cl_exch}
for the closure operator
$\cl_{R^n}$ clearly implies~\eqref{d.cl_exch-}.
To get the converse, assume~\eqref{d.cl_exch-} and suppose we are
given $S\subseteq R^n,$ and $t$ and $u$
satisfying $u\notin\cl_{R^n}(S)$ but $u\in\cl_{R^n}(S\cup\{t\}).$
Thus $\cl_{R^n}(S)\neq R^n,$ so since~\eqref{d.cl_exch-}
implies~(iii), there exists $S_0\subseteq S$ of cardinality $<n$
such that $\cl_{R^n}(S_0)=\cl_{R^n}(S).$
Hence $u\notin\cl_{R^n}(S_0)$ and $u\in\cl_{R^n}(S_0\cup\{t\});$
so~\eqref{d.cl_exch-} gives
$t\in\cl_{R^n}(S_0\cup\{u\}),$ whence $t\in\cl_{R^n}(S\cup\{u\}),$
establishing~\eqref{d.cl_exch}.

Clearly~(ii) implies that $\cl_{R^n}$ is finitary, proving~(iv).
\end{proof}

Statements~(ii) and~(iii) above are instances of well-known properties
of matroids $(X,\cl)$ for which the whole set $X$ is the closure
of an $\!n\!$-element subset;
cf.~\cite[Corollary to Theorem~1.5.1, p.\,14]{Welsh}.
The method of proof of statement~(i) likewise yields a general result:
if $\cl$ is a closure operator on a set $X$
such that $X$ is the closure under $\cl$ of an $\!n\!$-element subset,
then $(X,\cl)$ is a matroid if and only if it satisfies the
weakened version of the exchange property in
which $S$ is restricted to subsets of cardinality $<n.$
I haven't seen this stated, but it is probably known.

Statement~(iv) is \cite[Lemma~2]{sfd_fr_mtrd}.
As noted in~\cite{sfd_fr_mtrd}, I included finitariness in my
list of conditions on the families of operators
considered so that these would clearly be matroid structures;
but that lemma showed the finitariness
condition superfluous in the presence of the other conditions.
So in the remainder of this note, the set of conditions
on a family of closure operators that we shall understand need to be
verified to get a homomorphism into a
division ring will be~\eqref{d.cl_cl}-\eqref{d.cl_exch}.

\section{Closure structures on restricted families of free modules}\label{S.n<N}

We shall consider in this section families
$(\cl_{R^n})_{0\leq n\leq N}$ of closure
operators $\cl_{R^n}$ defined only for the finitely many values of $n$
indicated in the subscript.
The hope is that results on such families
may prove useful in inductive proofs that certain
infinite families $(\cl_{R^n})_{n\geq 0}$
satisfy~\eqref{d.cl_cl}-\eqref{d.cl_exch} for all $n.$
The results of this section will not be called on in later sections,
so some readers may prefer to skip or skim this material.

\begin{convention}\label{C.N}
In this section, $R$ will be a ring and $N$ a fixed nonnegative integer,
and for $0\leq n\leq N,$ closure operators $\cl_{R^n}$ on $R^n$
will be assumed given, which satisfy~\eqref{d.cl_cl}-\eqref{d.cl_exch}
for all $m,n\leq N.$
\end{convention}

It will be useful to regard elements of $R^n$ as column vectors over
$R,$ and to treat finite families of such elements as matrices.
Let us fix some conventions regarding these.

Given an $n\times m$ matrix $H,$ we shall understand a
{\em submatrix} of $H$ to be
specified by a (possibly empty) subset of the $n$ row-indices and a
(possibly empty) subset of the $m$ column-indices.
(Thus, submatrices determined by different pairs of subsets will be
regarded as distinct, even if, when re-indexed using index-sets
$1,\dots,n_0$ and $1,\dots,m_0,$ they happen to give equal matrices.)
We shall regard the set of submatrices of $H$ as ordered by inclusion
(corresponding to inclusions among the sets of row-indices
and column-indices involved), so that we can speak of submatrices
maximal or minimal for a property.
A submatrix of $H$ will be called square if it has the same number
of rows as of columns, even if these are not indexed by the same
families of integers.
Because the statements we will be considering will not be
affected by rearranging the rows and columns, we shall, however,
for ease in visual presentation, often assume without loss of generality
that submatrices we are interested in form contiguous blocks.

For $R$ and $(\cl_{R^n})_{0\leq n\leq N}$ as in
Convention~\ref{C.N}, let us give names
to the properties of matrices over $R$ with $n\leq N$ rows which, in
the special case where $R$ is
a division ring and $\cl_{R^n}(S)$ denotes the
right subspace generated by $S,$
describe right, left, and $\!2\!$-sided invertibility of such matrices.

We remark that since conditions~\eqref{d.cl_cl}-\eqref{d.cl_exch}
are all stated in terms of {\em right} module structures, what we can
say about these matrix conditions will not, in general,
be symmetric with respect to rows and columns.
Note also that an index $m,$ $n$ etc.\ may or may not be
restricted to values $\leq N,$ depending on whether it occurs
in a context where it describes the heights of column vectors.

\begin{definition}\label{D.strong}
An $n\times m$ matrix $H$ over $R$ with $n\leq N$ will be called
{\em right strong} with respect to $\cl_{R^n}$ if the
closure under that operation of its set of columns is all of $R^n,$
{\em left strong} with respect to $\cl_{R^n}$
if no proper subfamily of its columns has closure equal to
the closure of all the columns,
and {\em strong} with respect to $\cl_{R^n}$
if both those conditions hold.
When the closure operator in question is clear from
context, we shall simply write ``right strong'',
``left strong'', and ``strong''.
\end{definition}

If operators $\cl_{R^n}$ are given for all $n\geq 0$
(and all cases of~\eqref{d.cl_cl}-\eqref{d.cl_exch} are thus assumed),
then the remaining results of this
section are easy to prove using the homomorphism
of $R$ into a division ring $D$ discussed
following~\eqref{d.cl_cl}-\eqref{d.cl_exch} above,
together with basic linear algebra over division rings.
But since here we are only assuming such operators given
for $n\leq N,$ we shall have to do things the hard way.

First, a few very basic facts,
though some of them are lengthy to establish.

\begin{lemma}\label{L.strong_basics}
\textup{(i)}~For all $n\leq N,$ the $n\times n$ identity
matrix $I_n$ is strong.

\textup{(ii)}\, Among matrices over $R$ with $\leq N$ rows, the class
of right-strong matrices and the class of left-strong matrices
are each closed under matrix multiplication
\textup{(}where defined\textup{)}.

\textup{(iii)} The class of right-strong matrices is also closed
under adjoining
additional columns and under deleting rows, and the class
of left-strong matrices under deleting
columns and adjoining rows \textup{(}so long as the number
of rows remains $\leq N).$
\end{lemma}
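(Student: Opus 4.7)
The plan is to dispatch the easy parts first and leave the main obstacle---left-strength of a product---for last. For~(i), right-strength of $I_n$ is immediate: $\{e_1,\dots,e_n\}$ generates $R^n$ as a right $R$-module, and by~\eqref{d.cl_cl} closed subsets are submodules, so $\cl_{R^n}(\{e_1,\dots,e_n\})\supseteq R^n$. For left-strength I fix an index $i$ and apply~\eqref{d.cl_homs'} to the coordinate projection $\pi_i\colon R^n\to R$, obtaining $\pi_i(\cl_{R^n}(\{e_j:j\neq i\}))\subseteq\cl_R(\{0\})=\cl_R(\emptyset)$; by~\eqref{d.cl_proper}, $1\notin\cl_R(\emptyset)$, so $e_i\notin\cl_{R^n}(\{e_j:j\neq i\})$. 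The four cases of~(iii) proceed along similar lines: adjoining columns to a right-strong matrix is trivial by monotonicity; deleting a row from a right-strong $H$ uses~\eqref{d.cl_homs'} for the projection $R^n\to R^{n-1}$ to push $R^{n-1}$ into the closure of the reduced columns; deleting a column from a left-strong $H$ uses the characterization ``each column lies outside the closure of the others'', which passes to subfamilies by monotonicity~\eqref{d.cl_>>}; adjoining a row to a left-strong $H$ is the contrapositive, with any purported dependence upstairs projected down by the same device to a dependence among the original columns.

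For~(ii) right-strength, I would introduce the module map $\phi\colon R^m\to R^n$ with $\phi(e_i)=h_i$, so that the columns of $HK$ are $\phi(k_1),\dots,\phi(k_p)$. By~\eqref{d.cl_homs'} applied to $\cl_{R^m}(\{k_j\})=R^m$ (right-strength of $K$), one has $\phi(R^m)\subseteq\cl_{R^n}(\{\phi(k_j)\})$; since the right-hand side is closed and contains $\{h_1,\dots,h_m\}\subseteq\phi(R^m)$, it contains $\cl_{R^n}(\{h_i\})=R^n$ by right-strength of $H$.

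The main obstacle is~(ii) for left-strength. My plan is to establish the auxiliary assertion that \emph{if $H$ is left-strong, then $\phi^{-1}(\cl_{R^n}(\phi(S)))=\cl_{R^m}(S)$ for every $S\subseteq R^m$}; granted this, left-strength of $HK$ follows, for if $\phi(k_p)\in\cl_{R^n}(\{\phi(k_j):j<p\})$ then $k_p\in\cl_{R^m}(\{k_j:j<p\})$, contradicting left-strength of $K$. The inclusion $\supseteq$ is a direct application of~\eqref{d.cl_homs'}; for the reverse I would begin with $S=\emptyset$. Writing $v=\sum e_i a_i\in\phi^{-1}(\cl_{R^n}(\emptyset))$ and supposing some $a_i\notin\cl_R(\emptyset)$, the case $n=1$ of~(i) makes $(R,\cl_R)$ a rank-$1$ matroid, so $\cl_R(\{a_i\})=R$; applying~\eqref{d.cl_homs'} to $r\mapsto h_ir$ then yields $h_i\in\cl_{R^n}(\{h_ia_i\})$, while the relation $\sum h_ja_j=\phi(v)\in\cl_{R^n}(\emptyset)$ rearranges to place $h_ia_i$ inside $\cl_{R^n}(\{h_j:j\neq i\})$, and combining these contradicts left-strength of $H$. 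Extending this argument from $S=\emptyset$ to arbitrary $S$ is the step I expect to consume the bulk of the work; I would attack it via finitariness (Lemma~\ref{L.<n}(iv)) together with the exchange property~\eqref{d.cl_exch} to pivot between $v$ and individual elements of $S$, or failing that, by reducing to the case of an $n\times n$ strong matrix---extending $\{h_1,\dots,h_m\}$ to a matroid basis of $R^n$ via exchange and Lemma~\ref{L.<n}(ii)--(iii), and padding $K$ with zero rows---where the added structure may make the closure-preservation claim more tractable. Carrying this out without an ambient division ring, which the restricted hypothesis $n\leq N$ denies us, is where the real difficulty should lie.
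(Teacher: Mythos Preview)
Your handling of~(i), (iii), and the right-strong half of~(ii) is correct; in~(iii) you argue the row-deletion and column-deletion cases more directly than the paper, which instead reduces them to~(ii) by multiplying on the left or right by the matrices $(I_{n-d}\ \ 0)$ and $\left(\begin{smallmatrix}I_{n'-d}\\0\end{smallmatrix}\right)$.

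The genuine gap is the left-strong half of~(ii), as you yourself flag. Your auxiliary claim $\phi^{-1}(\cl_{R^n}(\phi(S)))=\cl_{R^m}(S)$ is correct and is in fact equivalent to the statement that $\phi$ carries independent families in $R^m$ to independent families in $R^n$; but you establish it only for $S=\emptyset$, and neither of your suggested extensions is the right move. The paper proves the independence-preservation form by a short rank argument carried out in the \emph{domain} $R^m$, not the codomain: given an independent family $(x_i)_{i\in I}$ in $R^m$, extend it (via exchange against $e_1,\dots,e_m$) to an $m$-element independent family $(x_i)_{i\in J}$ whose closure is all of $R^m$. Then~\eqref{d.cl_homs'} yields
\[
\cl_{R^n}\{\phi(x_i):i\in J\}
\;=\;\cl_{R^n}\bigl(\phi(R^m)\bigr)
\;=\;\cl_{R^n}\{h_1,\dots,h_m\}.
\]
The right-hand side has an independent spanning set of size $m$ (namely the $h_i$, by left-strength of $H$), so by equicardinality of bases in a matroid, the $m$-element spanning family $(\phi(x_i))_{i\in J}$ must itself be independent; a fortiori so is its subfamily $(\phi(x_i))_{i\in I}$. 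Your alternative of extending $\{h_1,\dots,h_m\}$ to a basis of $R^n$ and padding $K$ with zero rows points the wrong way: it reduces to the square case $m=n$, but that case still requires exactly the argument just described and is no easier.
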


\begin{proof}
(i):  The assertion that $I_n$ is right strong says
that $\cl_{R^n}(\{e_1,\dots,e_n\})=R^n,$ which was noted
in the second sentence of the proof of Lemma~\ref{L.<n}.
The assertion that $I_n$ is left strong says that none
of these elements is in the closure of all the others.
To see this, note that for each $i,$
the inverse image of $\cl_R(\emptyset)$ under the $\!i\!$-th projection
map $R^n\to R$ is closed by~\eqref{d.cl_homs},
and contains $e_j$ for all $j\neq i,$ but by~\eqref{d.cl_proper}
does not contain $e_i.$

To get the first assertion of~(ii),
suppose $A$ and $B$ are right-strong matrices,
where $A$ is $n\times n'$ and $B$ is $n'\times n''$
with $n,\,n'\leq N.$
Let $a: R^{n'}\to R^n$ and
$b: R^{n''}\to R^{n'}$ be the linear maps on column vectors defined
by left multiplication by these matrices.
The image of $a$ is the right $\!R\!$-submodule of $R^n$
spanned by the columns of $A,$ so the assumption that $A$ is
right strong implies (and, given~\eqref{d.cl_cl},
is equivalent to saying) that the closure of that image is all of $R^n.$
Similarly, the assumption that $B$ is right strong
says that the closure of the image of $b$ is all of $R^{n'}.$
Now by~\eqref{d.cl_homs'}, the closure of the image of $ab$
contains the image under $a$ of the closure of the image of $b,$
in other words, $a(R^{n'}),$ hence it contains
$\cl_{R^n}(a(R^{n'})),$ which we have noted is $R^n,$ proving that
$AB$ is right strong.

The proof of the statement about left-strong matrices is longer,
and is most easily carried out with the help of some concepts
and results from the theory of matroids.
Let us call a family $(x_i)_{i\in I}$ of elements of
$R^m$ $(m\leq N)$ {\em independent}
if the closure of $\{x_i\mid i\in I\},$ which for brevity we will
call the closure of the $\!I\!$-tuple $(x_i)_{i\in I},$
is not the closure of any proper subfamily
$(x_i)_{i\in J}$ $(J\subsetneqq I);$
and let the {\em rank} of an arbitrary family $(x_i)_{i\in I}$
mean the cardinality of any independent subfamily
$(x_i)_{i\in J}$ $(J\subseteq I)$ having
the same closure as the whole family, that is, any subfamily
minimal for having that closure.
Such subfamilies exist by Lemma~\ref{L.<n}(ii),
and by a standard result on matroids, based on an
element-by-element replacement construction as in the proof
of Lemma~\ref{L.<n}(ii), their cardinalities are all the same
value $\leq m$ \cite[Corollary to Theorem~1.5.1, p.14]{Welsh},
justifying the above definitions.
That same replacement argument shows that every independent family
of elements of $R^m$ can be extended to an $\!m\!$-element independent
family whose closure is all of $R^m$ \cite[Theorem~1.5.1]{Welsh}.

Now suppose $A$ is an $n\times n'$ left strong matrix
(with $n\leq N,$ and hence $n'$ necessarily also $\leq N$
by the left-strong condition),
let $a: R^{n'}\to R^n$ be the map it determines,
and let $(x_i)_{i\in I}$
be any independent family of elements of $R^{n'}.$
By the above observations, this can be extended to an
$\!n'\!$-element independent family
$(x_i)_{i\in J}$ $(J\supseteq I,$ $\r{card}(J)=n')$
with closure all of $R^{n'}.$
Hence by~\eqref{d.cl_homs'},
\begin{equation}\begin{minipage}[c]{35pc}\label{d.xiei}
$\cl_{R^n}\{a(x_i)\mid i\in J\}
=\cl_{R^n}(a(\cl_{R^{n'}}\{x_i\mid i\in J\}))
=\cl_{R^n}(a(R^{n'}))
=\cl_{R^n}\{a(e_i)\mid 1{\leq} i {\leq}n'\},$
\end{minipage}\end{equation}
where $\{e_1,\dots,e_{n'}\}$ is the standard basis of $R^{n'}.$
Now $\{a(e_i)\mid 1\leq i\leq n'\}$ is independent because $A$ is
left strong.
If $(a(x_i))_{i\in J}$ were dependent, then its closure
would be the closure of an independent family of $<n'$ elements,
so~\eqref{d.xiei} would
contradict the equal-cardinalities result cited in the
next-to-last sentence of the preceding paragraph.
So $(a(x_i))_{i\in J}$ is independent,
so a fortiori, $(a(x_i))_{i\in I}$ is independent.
Thus, $a$ carries independent families to independent families.

But this is equivalent to the result we are trying to prove; for if
$B$ is an $n'\times n''$ left strong matrix
and $b: R^{n''}\to R^{n'}$ its action, then
the statement that $B$ is left strong says that the family
of its columns, $(b(e_i))_{1\leq i\leq n''},$ is independent, and
the above result shows the same for $(ab(e_i))_{1\leq i\leq n''},$
the family of columns of~$AB.$
This completes the proof of~(ii).

In~(iii), the assertion
that the class of right-strong matrices is closed under adjoining
additional columns is clear from the definition of right strong.

That left strong matrices are closed under adjoining rows is
most easily verified as the contrapositive statement, that the class of
non-left-strong matrices is closed under dropping rows.
This is equivalent to saying that a dependent family of column
vectors remains dependent on dropping from each of its
members a fixed subset of the entries.
Such an entry-dropping operation corresponds to a homomorphism
$R^m\to R^n,$ and from~\eqref{d.cl_homs'} it is easy to see
that the image of a dependent family under {\em any} homomorphism
is dependent, giving the desired result.

In showing that dropping rows preserves the property
of being right strong, and dropping columns that
of being left strong, let our matrix be $n\times n',$
and assume without loss of
generality that the rows or columns to be dropped are
the last $d$ rows or columns, for some $d.$
Then the row-dropping operation is equivalent to left multiplication by
the $n{-}d\times n$ matrix $(I_{n-d}\ \ 0),$
where $I_{n-d}$ is the $n{-}d\times n{-}d$ identity matrix, and
the column-dropping operation to right multiplication by the
$n'\times n'{-}d$ matrix
$\left(\begin{matrix} I_{n'-d}\\
0\end{matrix}\right).$
That these matrices are respectively right and left strong
can be seen from the argument proving~(i);
the desired results then reduce to special cases of~(ii).
\end{proof}

In some considerations, it is easy to deal with adding and deleting
columns of a matrix, because these operations keep us within the
same set $R^n,$ but harder to handle adding and deleting rows.
In such situations, the following observation will be helpful.

\begin{lemma}\label{L.e_i}
Let $H$ be an $n\times n'$ matrix over $R,$ with $0\leq n\leq N,$
and let us be given a list of indices $0<i_1<\dots<i_d\leq n.$
Let $H'$ be the $n{-}d\times n'$ matrix formed by deleting
from $H$ the rows with indices $i_1,\dots,i_d,$ and
$H''$ the $n\times n'{+}d$ matrix formed by adjoining to $H$
the additional columns $e_{i_1},\dots,e_{i_d}.$

Then $H'$ is right strong, respectively, left strong, if and
only if $H''$ is.
\end{lemma}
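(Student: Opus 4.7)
The plan is to exploit the coordinate splitting $R^n=\iota(R^{n-d})\oplus K$, where $\pi\colon R^n\to R^{n-d}$ denotes the projection forgetting coordinates $i_1,\ldots,i_d$, $\iota\colon R^{n-d}\to R^n$ is the section inserting zeros in those positions, and $K=\bigoplus_{j=1}^d R e_{i_j}$. Each column $c_k$ of $H$ then decomposes uniquely as $c_k=c_k^0+\sum_j a_{jk}\,e_{i_j}$ with $c_k^0=\iota\pi c_k$, so the columns of $H'$ are $\pi c_1,\ldots,\pi c_{n'}$ in $R^{n-d}$, while those of $H''$ are $c_1,\ldots,c_{n'},e_{i_1},\ldots,e_{i_d}$ in $R^n$.

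For the right-strong equivalence, suppose first $H'$ is right strong, so $\cl_{R^{n-d}}(\pi c_1,\ldots,\pi c_{n'})=R^{n-d}$. Applying $\iota$ through~\eqref{d.cl_homs'} gives $\iota(R^{n-d})\subseteq\cl_{R^n}(c_1^0,\ldots,c_{n'}^0)$; since each $c_k^0=c_k-\sum_j a_{jk}\,e_{i_j}$ lies in the $R$-span of the columns of $H''$, this is contained in $\cl_{R^n}(\text{cols of }H'')$, which together with $K\subseteq\cl_{R^n}(\text{cols of }H'')$ gives $R^n\subseteq\cl_{R^n}(\text{cols of }H'')$. Conversely, if $H''$ is right strong, applying $\pi$ through~\eqref{d.cl_homs'} to $R^n=\cl_{R^n}(\text{cols of }H'')$ and using $\pi e_{i_j}=0$ yields $R^{n-d}=\pi(R^n)\subseteq\cl_{R^{n-d}}(\pi c_1,\ldots,\pi c_{n'})$.

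For the left-strong equivalence I will prove the chain of three equivalences: $(\pi c_k)_k$ is independent in $R^{n-d}$ iff $(c_k^0)_k$ is independent in $R^n$ iff $(c_k^0,e_{i_j})_{k,j}$ is independent in $R^n$ iff $(c_k,e_{i_j})_{k,j}$ is independent in $R^n$. The first equivalence is immediate: dependencies transport between these two families through $\iota$ and $\pi$ via~\eqref{d.cl_homs'}, using $\pi\iota=\mathrm{id}$ and $\pi c_k^0=\pi c_k$. The third equivalence follows because the two families $R$-span the same submodule (the differences $c_k-c_k^0$ lie in $K$), hence have equal closure and therefore equal rank; combined with their common cardinality $n'+d$, the standard matroid fact that a family is independent precisely when its cardinality equals the rank of its closure shows that they are simultaneously independent or dependent. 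The middle equivalence is the substantive step. Its $\Leftarrow$ direction is trivial, as a subfamily of an independent family is independent. For $\Rightarrow$, assume for contradiction that $(c_k^0,e_{i_j})$ is dependent; then either some $c_{k_0}^0\in\cl_{R^n}((c_k^0)_{k\ne k_0}\cup\{e_{i_j}\}_j)$, in which case applying $\pi$ via~\eqref{d.cl_homs'} gives $\pi c_{k_0}\in\cl_{R^{n-d}}((\pi c_k)_{k\ne k_0})$, so $(\pi c_k)_k$ (and hence $(c_k^0)_k$, by the first equivalence) is dependent, contradicting our hypothesis; or else some $e_{i_{j_0}}\in\cl_{R^n}(\{c_k^0\}_k\cup(e_{i_j})_{j\ne j_0})$, which is ruled out by projecting onto the $i_{j_0}$-th coordinate $R^n\to R$: via~\eqref{d.cl_homs'}, since every element on the right-hand side has $0$ in position $i_{j_0}$, we would get $1\in\cl_R(\emptyset)$, contradicting~\eqref{d.cl_proper}.

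The main obstacle in the argument is this last case: the projection $\pi$ annihilates $e_{i_{j_0}}$ and so gives no leverage on a dependency involving it, and one must instead single out the coordinate $i_{j_0}$ and invoke~\eqref{d.cl_proper} to derive a contradiction. Everything else reduces to routine applications of~\eqref{d.cl_cl}--\eqref{d.cl_exch} and the rank-based matroid reasoning already used in the proof of Lemma~\ref{L.strong_basics}.
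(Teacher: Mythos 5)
Your argument is correct, and while it is built on the same two maps as the paper's proof -- the coordinate-deleting projection (your $\pi$, the paper's $f$) and the zero-inserting section (your $\iota$, the paper's $g$) -- the mechanism is genuinely different. The paper's sketch runs through \eqref{d.cl_homs} rather than \eqref{d.cl_homs'}: it notes that $f^{-1}$ and $g^{-1}$ induce a bijection between the closed submodules of $R^{n-d}$ and the closed submodules of $R^n$ containing $e_{i_1},\dots,e_{i_d}$, and extracts the single biconditional that $t\in\cl_{R^n}(S\cup\{e_{i_1},\dots,e_{i_d}\})$ if and only if $f(t)\in\cl_{R^{n-d}}(f(S))$, from which both the right-strong and left-strong statements are read off with little further work and, in particular, without rank considerations. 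You instead push spans and dependencies forward through \eqref{d.cl_homs'}; this forces the case split in the left-strong direction (a dependency hitting one of the adjoined columns $e_{i_{j_0}}$, which you correctly dispose of by projecting to the $i_{j_0}$-th coordinate and invoking \eqref{d.cl_proper}), and it makes you pay for the passage from the columns $c_k$ to their truncations $c_k^0$ with the rank/cardinality argument, which rests on the exchange property \eqref{d.cl_exch} via the equal-cardinality facts from Welsh already used in Lemma~\ref{L.strong_basics}. All of this is legitimate under Convention~\ref{C.N}, so the proof stands; the trade-off is that the paper's closed-submodule correspondence buys uniformity and economy (one equivalence, no matroid rank machinery), while your version is more computational but entirely elementwise and makes explicit where \eqref{d.cl_proper} and \eqref{d.cl_exch} enter. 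You could in fact streamline your left-strong case analysis by first establishing the paper's biconditional from your own setup, since your $\pi$- and $\iota$-arguments essentially prove it along the way.
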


\begin{proof}[Sketch of proof]
Let $f: R^n\to R^{n-d}$ be the map which drops the coordinates
with indices $i_1,\dots,i_d,$ and $g: R^{n-d}\to R^n$ the right
inverse to $f$ which inserts $\!0\!$'s in these positions.
We see that $f^{-1}$ and $g^{-1},$ applied to
submodules, give a bijection between
all submodules of $R^{n-d},$ and those submodules
of $R^n$ which contain $e_{i_1},\dots,e_{i_d}.$
By~\eqref{d.cl_homs}, $f^{-1}$ and $g^{-1}$ each carry closed
submodules to closed submodules, hence by the preceding
observation they induce a
bijection between $\!(\cl_{R^{n-d}})\!$-closed submodules of $R^{n-d},$
and $\!\cl_{R^n}\!$-closed submodules
of $R^n$ that contain $e_{i_1},\dots,e_{i_d}.$
It follows that if $S$ is a subset and $t$ an element of $R^n,$
then $t\in\cl_{R^n}(S\cup\{e_{i_1},\dots,e_{i_d}\})$ if and only if
$f(t)\in\cl_{R^{n-d}}(f(S)).$
The two conclusions of
the lemma follow by combining these
observations with the definitions of right and left strong.
\end{proof}

We can now generalize several facts about right and left invertible
matrices over a division ring to right and left strong
matrices with respect to the system $(\cl_{R^n})_{0\leq n\leq N}$
of Convention~\ref{C.N}.

\begin{lemma}\label{L.strong_n-1}
Let $H$ be an $n\times n'$ matrix over $R,$ with $n\leq N.$
Then

\textup{(i)} If $H$ is right strong, then $n\leq n',$
i.e., $H$ has at least as many columns as rows.

\textup{(ii)} If $H$ is left strong, then $n\geq n',$
i.e., $H$ has at least as many rows as columns.

\textup{(iii)} Suppose $H$ is right strong, and $H'$ is a
submatrix of $H$ given by a subset of the columns of $H.$
Then $H'$ is minimal among such submatrices
which are right strong if and only if it is maximal among such
submatrices which are left strong.
Hence if $H'$ has either the above minimality property or
the above maximality property, it is strong.

\textup{(iv)} Suppose $H$ is left strong, and $H'$ is a
submatrix of $H$ given by a subset of the rows of $H.$
Then $H'$ is minimal among such submatrices
which are left strong if and only if it is maximal among such
submatrices which are right strong.
Hence, again, if $H'$ has either the above minimality property or
the above maximality property, it is strong.

\textup{(v)} If $H$ is left or right strong,
then it is strong if and only if $n=n',$
i.e., if and only if it is square.

\textup{(vi)} If $m$ is the common cardinality of all
maximal independent sets of columns of $H,$
equivalently, all sets of columns minimal
for having the same closure as the set of all columns of $H,$
then all maximal strong submatrices of $H$ are $m\times m.$

\textup{(vii)} If $H$ is square, hence $n\times n,$
and its upper left-hand $n{-}1\times n{-}1$
block is strong, then $H$ is strong if and only if
its $\!n\!$-th column is not in the
closure of its other $n{-}1$ columns.
\end{lemma}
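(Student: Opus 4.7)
The plan is as follows. The forward direction ($\Rightarrow$) is immediate from Definition~\ref{D.strong}: if $H$ is strong then it is in particular left strong, so no column lies in the closure of the others, which is exactly the statement that the $n$-th column is not in the closure of the first $n{-}1$ columns.

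For the converse, write the columns of $H$ as $c_1,\dots,c_n$ and assume $c_n\notin\cl_{R^n}(\{c_1,\dots,c_{n-1}\})$. The first move is to invoke Lemma~\ref{L.e_i} (with $d=1$ and $i_1=n$) applied to the $n\times(n{-}1)$ matrix $(c_1,\dots,c_{n-1})$: dropping the $n$-th row of this matrix yields exactly $H_0$, and adjoining the column $e_n$ yields the $n\times n$ matrix $(c_1,\dots,c_{n-1},e_n)$. Hence the assumption that $H_0$ is strong is equivalent to the assertion that $(c_1,\dots,c_{n-1},e_n)$ is strong as an $n\times n$ matrix, which is a statement that lives entirely inside $R^n$. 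The task therefore reduces to the following clean assertion, of which the situation at hand is the special case $v_j=c_j$ for $j<n$, $v_n=e_n$, $w=c_n$: \emph{if $(v_1,\dots,v_n)$ is strong in $R^n$ and $w\in R^n$ satisfies $w\notin\cl_{R^n}(\{v_1,\dots,v_{n-1}\})$, then $(v_1,\dots,v_{n-1},w)$ is strong.}

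To check right strongness of $(v_1,\dots,v_{n-1},w)$, I would note that $w\in R^n=\cl_{R^n}(\{v_1,\dots,v_n\})$ while $w\notin\cl_{R^n}(\{v_1,\dots,v_{n-1}\})$, so the exchange property~\eqref{d.cl_exch} with $S=\{v_1,\dots,v_{n-1}\}$, $t=v_n$, $u=w$ gives $v_n\in\cl_{R^n}(\{v_1,\dots,v_{n-1},w\})$; the closure of $\{v_1,\dots,v_{n-1},w\}$ therefore contains $\{v_1,\dots,v_n\}$, hence is all of $R^n$. For left strongness, nonredundancy of $w$ is the hypothesis, so it remains to handle $v_i$ for $i<n$. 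Suppose toward a contradiction that $v_i\in\cl_{R^n}(T\cup\{w\})$ with $T=\{v_1,\dots,v_{n-1}\}\setminus\{v_i\}$. Left strongness of $(v_1,\dots,v_n)$ gives $v_i\notin\cl_{R^n}(T\cup\{v_n\})$, and in particular $v_i\notin\cl_{R^n}(T)$. A single application of~\eqref{d.cl_exch} (this time with $u=v_i$, $S=T$, $t=w$) then yields $w\in\cl_{R^n}(T\cup\{v_i\})=\cl_{R^n}(\{v_1,\dots,v_{n-1}\})$, contradicting the hypothesis on $w$.

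The one nontrivial choice in the argument is picking the right exchange instance in the left-strong half, and the one conceptual step is the reduction via Lemma~\ref{L.e_i}; without that reduction one would have to relate $\cl_{R^{n-1}}$ on the block $H_0$ to $\cl_{R^n}$ on $H$ directly, which is awkward because $H_0$'s columns live in the wrong ambient module. I do not anticipate any other obstacles.
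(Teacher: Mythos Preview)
Your proof of (vii) is correct, and takes a somewhat different route from the paper's. The paper first notes, via Lemma~\ref{L.strong_basics}(iii) (adjoining a row preserves left strongness), that the $n\times(n{-}1)$ matrix of the first $n{-}1$ columns of $H$ is left strong; it then argues (by the same exchange instance you use for the left-strong half) that appending $c_n$ preserves left strongness if and only if $c_n\notin\cl_{R^n}(\{c_1,\dots,c_{n-1}\})$, and finally invokes part~(v) to pass from ``left strong and square'' to ``strong.'' You instead use Lemma~\ref{L.e_i} to translate strongness of $H_0$ into strongness of $(c_1,\dots,c_{n-1},e_n)$ inside $R^n$, and then carry out a direct matroid basis-exchange argument, establishing right-strongness of $(c_1,\dots,c_{n-1},c_n)$ by a second application of~\eqref{d.cl_exch} rather than via~(v). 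Both arguments are short and standard; yours has the minor advantage of not depending on the earlier part~(v) of the same lemma, while the paper's is slightly terser because it needs only one exchange.
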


\begin{proof}
(i)~says that any
family of elements of $R^n$ whose closure is $R^n$ must have
$\geq n$ elements, and~(ii) that
any independent family must have $\leq n$ elements.
Both of these facts follow easily from the results cited in
the second paragraph of the proof of Lemma~\ref{L.strong_basics}(ii).

Assertion~(iii) is immediate:
With the help of the exchange property, it is
easy to check that the maximal independent subfamilies of the family
of columns of $H$ are the same as the minimal subfamilies having $R^n$
as closure (an instance of a general property of matroids).

To get~(iv), let $H^*$ be the matrix $(H\ I_n)$ obtained by
appending the columns $e_1,\dots,e_n$ to $H.$
Clearly, $H^*$ is right strong.
Let us now consider submatrices of $H^*$ which consist of all the
columns of $H,$ together with a subset of the columns of $I_n,$
and apply~(iii) to such submatrices.
It is easy to see that a submatrix given by such
a set of columns, which is minimal
or maximal among such submatrices for one of the properties
named in~(iii), will in fact be minimal or maximal for the same
property among {\em all} submatrices given by subsets
of the columns of $H^*.$
Applying~(iii), and then Lemma~\ref{L.e_i}, we get~(iv).

In~(v), the ``only if'' direction follows from~(i) and~(ii).
To get ``if'', suppose that $H$ is square and right strong.
Statement~(iii) tells us that by deleting some columns of $H$
we can get a strong matrix $H';$ but if the set of columns so
deleted were nonempty, then $H'$ would not be square, contradicting
the ``only if'' direction.
So that set of columns is empty, so $H=H'$ is itself strong.
In the case where $H$ is left strong, one uses the same
method, calling on~(iv) in place of~(iii).

The conclusion of~(vi) is equivalent to saying that every
strong submatrix of $H$ is contained in an $m\times m$
strong submatrix; so let $A$ be any strong submatrix of $H,$ and
assume without loss of generality \textup{(}by permuting
the rows and columns of $H$ if necessary\textup{)}
that $A$ is the upper left-hand $m'\times m'$ block of $H$
for some $m'\leq n.$
Since $A$ is strong it is left strong, hence by
Lemma~\ref{L.strong_basics}(iii), the submatrix $H'$ of $H$
given by the first $m'$ columns of $H$ is also left strong.
Let us extend $H'$ to a maximal left-strong submatrix
$H''$ of $H$ given by a subset of the columns; this will
have $m$ columns, which we can assume by a further rearrangement
are the first $m$ columns of $H.$
Now the submatrix given by the first $m'$ rows of $H''$
contains the columns of $A,$ hence is right strong; let us
adjoin further rows of $H''$ to get a maximal right strong
submatrix $A''$ consisting of rows of $H''.$
By~(iv) (with $H''$ in the role of the $H$ of that statement),
$A''$ is strong, so it is the desired $m\times m$ strong
submatrix of $H$ containing $A.$

Finally, in the situation of~(vii), note that the $n\times n{-}1$
submatrix given by the first $n-1$ columns of $H$ is left strong,
since it is obtained by adjoining a row to a left strong
$n{-}1\times n{-}1$ matrix.
The matrix $H$ is obtained from that submatrix
by bringing in a final column,
so it will be left strong if and only if that column is not
in the closure of the other columns.
(The verification of the ``if'' direction uses the exchange property.)
Since it is square,~(v) tells us that
that necessary and sufficient condition for it to
be left strong is in fact necessary and sufficient for it to be strong.
\end{proof}

The above arguments were, as noted, row-column asymmetric because
we were considering closure operators on column vectors,
rather than on row vectors.
Note, however, that the set of $m\times n$ matrices $(m,n\leq N)$
right strong with respect to $(\cl_{R^n})_{0\leq n\leq N}$
is easily seen to determine $(\cl_{R^n})_{0\leq n\leq N}.$
It seems likely that one could find simple necessary and sufficient
conditions for a set of matrices to be the right strong matrices
with respect to such a family.
If it should turn out that the left strong matrices
satisfy the analogs of those conditions, it would follow
that our family of closure operators on right modules is equivalent to
a similar family on left modules.
I have not investigated these ideas.
(If one developed such results, one would want distinct notations for
the sets of $\!n\!$-element rows and $\!n\!$-element columns over $R.$
Following Cohn, one might denote these $R^n$ and $^n\!R.)$

Let us also note that the existence of just finitely many
operators, $\cl_{R^n}$ for $0\leq n\leq N,$ as
discussed in this section, cannot alone be sufficient
for $R$ to be embeddable in a division ring.
For if $R$ is any {\em $\!N\!$-fir}, the method of
\cite[\S7]{sfd_fr_mtrd} yields such closure operators
for $0\leq n\leq N,$ but examples are known of $\!N\!$-firs
whose free modules of larger finite ranks behave badly,
e.g., satisfy $R^{N+1}\cong R^{N+2},$ making it
impossible to embed such rings in division rings.
Thus, if the results of this section are to be useful in proving
results on embeddability in division rings, that use
is likely to be, as suggested at the start of this section,
as a tool in inductive developments.
(Some details on the facts cited above:
The concept of {\em fir}, i.e., free ideal ring,
is recalled in \cite[\S7]{sfd_fr_mtrd}, and $R$ is assumed
to be a fir in most of the that section; but the
final paragraphs of that section note how to generalize the
arguments to the case of a {\em semifir}, and the same method,
applied to {\em $\!N\!$-firs}, rings in which every right or left
ideal generated by $\leq N$ elements is free of unique rank,
gives the desired family $(\cl_{R^n})_{0\leq n\leq N}.$
For examples of $\!N\!$-firs whose free modules of ranks ${>}\,N$ behave
badly, see the $V_{m,n}$ case of \cite[Theorem 6.1]{coproducts2}.)

\section{Systems of closure operators induced by $\!R\!$-modules}\label{S.M}
In \S\ref{S.mtrd} we saw how a homomorphism of a ring $R$
into a division ring $D$ induces, by~\eqref{d.cl},
a system of closure operators
satisfying~\eqref{d.cl_cl}-\eqref{d.cl_exch}.
Suppose that instead of
a homomorphism from $R$ to a division ring, we are given
a nonzero right $\!R\!$-module $M.$
There is no obvious way to put $M$ in place of $D$ in~\eqref{d.cl}
(even if we assume it a left rather than a right module);
but we shall see below that there is a natural way to get from $M$
a system of closure operators $(\cl_{R^n})_{0\leq n}$
which, for $M=D_R$ $(D$ regarded as a right $\!R\!$-module)
agrees with that given by~\eqref{d.cl}.

For each $n>0,$ let us write elements $a\in M^n$ as
row vectors, and elements $x\in R^n$ as column vectors.
Then for such $a$ and $x$ we can define $a x\in M$ in the obvious way;
thus we can speak of elements of $R^n$
annihilating elements of $M^n.$
For $S\subseteq R^n,$ let $\cl_{R^n}(S)$
be the set of elements of $R^n$ that annihilate all
elements of $M^n$ annihilated by all elements of $S.$
Writing $\r{ann}_{M^n}(S)$ for
$\{a\in M^n\mid (\forall\,s\in S)\linebreak[1]\ as=0\},$
this becomes
\begin{equation}\begin{minipage}[c]{35pc}\label{d.cl_fr_M}
$\cl_{R^n}(S)\ =\ \{x\in R^n\mid
\r{ann}_{M^n}(x)\supseteq \r{ann}_{M^n}(S)\}.$
\end{minipage}\end{equation}
We see that the closed subsets of $R^n$ under~\eqref{d.cl_fr_M}
are precisely the annihilators of subsets of $M^n.$

It is not hard to check that given a homomorphism to a division
ring, $f:R\to D,$ as in \S\ref{S.mtrd},
if we let $M=D_R,$ then~\eqref{d.cl_fr_M} describes the
same closure operator as~\eqref{d.cl}.
(The key observation is that every subspace of the right $\!D\!$-vector
space of height-$\!n\!$ columns over $D$ is the right annihilator
of a set of length-$\!n\!$ rows over $D$ -- since such row
vectors correspond to the $\!D\!$-linear functionals on that space --
so the right $\!R\!$-submodules of $R^n$ that are inverse
images under $f$ of $\!D\!$-subspaces of $D^n,$
regarded as sets of columns, are those that are
annihilators of sets of elements of~$D^n$ regarded as rows.)

Returning to the case of a general right $\!R\!$-module $M,$
let us, for any {\em matrix} $A$ over $R$ with $n$ rows,
write $\r{ann}_{M^n}(A)$ for the subset of $M^n$ annihilated
by the right action of $A,$ in other words,
the annihilator in $M^n$ of the set of columns of~$A.$

\begin{lemma}\label{L.cl_fr_M}
Let $R$ be a ring and $M$ a nonzero right $\!R\!$-module, and
for each $n\geq 0$ let $\cl_{R^n}$ be defined by~\eqref{d.cl_fr_M}.
Then this family of operators satisfies
conditions~\eqref{d.cl_cl}, \eqref{d.cl_proper} and
\eqref{d.cl_homs}.
For each $n,$ the condition that $\cl_{R^n}$ also
satisfy~\eqref{d.cl_exch}
\textup{(}which, as noted, is equivalent
to~\eqref{d.cl_exch-}\textup{)}, is equivalent
to each of the following three statements.
\begin{equation}\begin{minipage}[c]{35pc}\label{d.cl_exch_iff}
There do not exist a subset $S\subseteq R^n,$ and elements
$u,t\in R^n,$ such that\\
$\r{ann}_{M^n}(S)\ \supsetneqq\ \r{ann}_{M^n}(S\cup\{u\})\ \supsetneqq
\ \r{ann}_{M^n}(S\cup\{t\}).$
\end{minipage}\end{equation}
\begin{equation}\begin{minipage}[c]{35pc}\label{d.cl_exch_iff_mx-}
There do not exist an $n\times n{-}1$ matrix $A$ over $R,$
and $n\times n$ matrices $B,$ $C$ over $R,$
each obtained by adding a single column to $A,$ such that
\ $\r{ann}_{M^n}(A)\ \supsetneqq
\ \r{ann}_{M^n}(B)\ \supsetneqq\ \r{ann}_{M^n}(C).$
\end{minipage}\end{equation}
\begin{equation}\begin{minipage}[c]{35pc}\label{d.cl_exch_iff_mx}
There do not exist $n\times n$ matrices $A,$ $B,$ $C$
over $R$ which all agree except in one column, such that
\ $\r{ann}_{M^n}(A)\ \supsetneqq
\ \r{ann}_{M^n}(B)\ \supsetneqq\ \r{ann}_{M^n}(C).$
\end{minipage}\end{equation}

Hence, if a ring $R$ has a {\em faithful} right module $M$ which
for all $n\geq 0$ satisfies~\eqref{d.cl_exch_iff}, equivalently,
\eqref{d.cl_exch_iff_mx-}, equivalently, \eqref{d.cl_exch_iff_mx},
then $R$ is embeddable in a division ring.
\end{lemma}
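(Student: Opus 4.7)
My plan has three parts: (i) verify \eqref{d.cl_cl}, \eqref{d.cl_proper}, and \eqref{d.cl_homs} directly from the ``double annihilator'' form of \eqref{d.cl_fr_M}; (ii) translate \eqref{d.cl_exch} and its restrictions into statements about strict chains of annihilators in $M^n$, yielding the three equivalent conditions; (iii) invoke the main result of \S\ref{S.mtrd} to obtain an embedding when $M$ is faithful.

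For (i), formula \eqref{d.cl_fr_M} presents $\cl_{R^n}$ as the closure induced by the Galois connection between subsets of $R^n$ and subsets of $M^n$ coming from the bilinear pairing $M^n\times R^n\to M.$  The three axioms of a closure operator are the usual formal consequence of any such connection, and closed sets are right $\!R\!$-submodules because the annihilator in $R^n$ of any $a\in M^n$ is a submodule.  For \eqref{d.cl_proper}, pick a nonzero $m\in M$ and pair the row $(m,0,\dots,0)\in M^n$ against $e_1\in R^n$ to see that $e_1\notin\cl_{R^n}(\emptyset).$  For \eqref{d.cl_homs}, realise $h:R^m\to R^n$ as left multiplication by an $n\times m$ matrix $H$ acting on column vectors; the identity $h^{-1}(\r{ann}_{R^n}(T))=\r{ann}_{R^m}(\{a H:a\in T\})$ shows that preimages of annihilators of subsets of $M^n$ are annihilators of subsets of $M^m.$

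For (ii), write $A_S=\r{ann}_{M^n}(S)$ and use $A_{S\cup T}=A_S\cap A_T.$  I would check that ``$u\notin\cl(S)$'' translates to $A_S\supsetneq A_{S\cup\{u\}},$ that ``$u\in\cl(S\cup\{t\})$'' translates to $A_{S\cup\{t\}}=A_{S\cup\{t,u\}},$ and that ``$t\notin\cl(S\cup\{u\})$'' translates to $A_{S\cup\{u\}}\supsetneq A_{S\cup\{u,t\}};$ chaining these gives exactly the forbidden configuration of \eqref{d.cl_exch_iff}, and the converse recovers the three hypotheses from the chain, establishing \eqref{d.cl_exch}$\,\Leftrightarrow\,$\eqref{d.cl_exch_iff}.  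Lemma~\ref{L.<n}(i) then lets me restrict to $|S|<n,$ i.e.\ to column-sets of $n\times n{-}1$ matrices, giving the equivalence with \eqref{d.cl_exch_iff_mx-}.  Finally, \eqref{d.cl_exch_iff_mx-}$\,\Rightarrow\,$\eqref{d.cl_exch_iff_mx} by padding $A$ with a zero column (which leaves its annihilator unchanged), and the reverse takes $A$ to be the common $n\times n{-}1$ submatrix of the three matrices in \eqref{d.cl_exch_iff_mx},$\,$ with strictness of the shortened chain again verified via $A_{X\cup Y}=A_X\cap A_Y.$

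The final assertion follows immediately: faithfulness of $M$ gives $\cl_R(\emptyset)=\r{ann}_R(M)=0,$ and if \eqref{d.cl_exch_iff} holds for every $n$ then the family $(\cl_{R^n})_{n\geq 0}$ satisfies \eqref{d.cl_cl}-\eqref{d.cl_exch} (and hence \eqref{d.cl_finitary} by Lemma~\ref{L.<n}(iv)), so the construction recalled in \S\ref{S.mtrd} yields a homomorphism $f:R\to D$ into a division ring whose kernel is $\cl_R(\emptyset)=0,$ i.e.\ an embedding.  No single step is hard; the main obstacle is the bookkeeping in step~(ii), where one must track that each containment remains strict as one passes between the ``sets $S$'' and ``columns of a matrix'' formulations, all of which reduces to the identity $A_{X\cup Y}=A_X\cap A_Y$ and careful use of the strict containments already in hand.
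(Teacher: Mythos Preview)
Your proposal is correct and follows essentially the same route as the paper's proof: direct verification of \eqref{d.cl_cl}--\eqref{d.cl_homs} from the annihilator description, the order-reversing Galois correspondence to convert \eqref{d.cl_exch} into the forbidden chain \eqref{d.cl_exch_iff}, the reduction to $|S|<n$ via Lemma~\ref{L.<n}(i) for \eqref{d.cl_exch_iff_mx-}, and the cosmetic passage to \eqref{d.cl_exch_iff_mx}. One small slip: in your last equivalence you have the two constructions attached to the wrong directions. Padding the $n\times(n{-}1)$ matrix $A$ by a zero column turns a counterexample to \eqref{d.cl_exch_iff_mx-} into one to \eqref{d.cl_exch_iff_mx}, so by contraposition it proves \eqref{d.cl_exch_iff_mx}$\,\Rightarrow\,$\eqref{d.cl_exch_iff_mx-}; conversely, passing to the common $n\times(n{-}1)$ submatrix of three $n\times n$ matrices proves \eqref{d.cl_exch_iff_mx-}$\,\Rightarrow\,$\eqref{d.cl_exch_iff_mx}. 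Swap those labels and the argument is complete.
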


\begin{proof}
That the operators defined by~\eqref{d.cl_fr_M} satisfy
\eqref{d.cl_cl} and \eqref{d.cl_proper} is straightforward,
the key fact being that the annihilator in $R^n$ of
every element $a\in M^n$ is a right submodule of $R^n,$ which
is proper if $a\neq 0.$

\eqref{d.cl_homs} is also not difficult, but here are the details.
Let $h:R^m\to R^n$ be represented by the
$n\times m$ matrix $H,$ acting on the left on columns
of elements of $R.$
The matrix $H$ can also be applied on the right to rows of elements of
$M,$ so as to carry $M^n$ to $M^m,$ and if we also call this
map $h$ (and write it on the right), the associativity
of formal matrix multiplication gives the law $(a\,h)\,x=a\,(h\,x).$
Thus, if $A\subseteq R^n$ is closed, i.e., is the
annihilator of a subset $T\subseteq M^n,$ and we write its
inverse image $h^{-1}(A)\subseteq R^m$ as
$\{x\in R^m\mid h\,x\in A\}=
\{x\in R^m\mid(\forall\,t\in\nolinebreak T)\ t\,(h\,x)=0\}=
\{x\in R^m\mid(\forall\,t\in T)\ (t\,h)\,x=0\},$ we see that
this is the annihilator of $T\,h\subseteq M^m,$ hence also closed.

The equivalence of~\eqref{d.cl_exch} with~\eqref{d.cl_exch_iff}
is easy to see if we bear in mind that an inclusion between the
annihilators in $M^n$ of two subsets of $R^n$ is
equivalent to the {\em reverse} inclusion between the closures
of those subsets of $R^n,$ as defined by~\eqref{d.cl_fr_M}.

Condition~\eqref{d.cl_exch_iff_mx-} is a translation
of~\eqref{d.cl_exch-}, gotten by
looking at the $<n$ elements of $R^n$ in~\eqref{d.cl_exch-}
as columns of a matrix (and if there are fewer than $n-1$
elements in the set, throwing in enough zero columns
to bring the number of columns up to $n-1),$
then applying the definition~\eqref{d.cl_fr_M}.

The difference between~\eqref{d.cl_exch_iff_mx-}
and~\eqref{d.cl_exch_iff_mx} is merely cosmetic.
Indeed, if three $n\times n$ matrices with the properties
referred to in~\eqref{d.cl_exch_iff_mx} exist, then letting
$A'$ be the $n\times n{-}1$ matrix gotten by deleting the
column in which those three differ, we get
$\r{ann}_{M^n}(A')\supseteq \r{ann}_{M^n}(A)\supsetneqq
\r{ann}_{M^n}(B)\supsetneqq\r{ann}_{M^n}(C),$ so $A',$ $B,$ $C$
have the properties referred to
in~\eqref{d.cl_exch_iff_mx-}, while conversely, given
$A,$ $B,$ $C$ as in~\eqref{d.cl_exch_iff_mx-}, if we expand $A$
to an $n\times n$ matrix by adjoining a zero column, we get
matrices with the properties of~\eqref{d.cl_exch_iff_mx}.

To see the final assertion of the lemma, note that
applying~\eqref{d.cl_fr_M} with $n=1$ and $S=\emptyset,$ we find that
$\cl_{R}(\emptyset)\subseteq R$ is the annihilator of $M.$
If $M$ is faithful, this is the trivial ideal of $R,$ so
by \cite[(21) and Theorem~22]{sfd_fr_mtrd},
the system of closure operators $\cl_{R^n}$
determines a homomorphism with zero kernel from $R$ to a division ring.
\end{proof}

(So if, for every right ordered group $G,$
we could prove that the right $\!kG\!$-module $k((G))$
satisfied~\eqref{d.cl_exch_iff}, equivalently~\eqref{d.cl_exch_iff_mx-},
equivalently~\eqref{d.cl_exch_iff_mx}, this would show $kG$ embeddable
in a division ring.)\vspace{.4em}

Above, we have obtained a closure operator on free
$\!R\!$-modules by comparing
{\em kernels} of the maps $M^n\to M$ induced by elements of $R^n.$
Can we get a similar construction using images rather than kernels?

Yes, but things have to be set up a bit differently.
Note that in the key case where $M=D_R$ for
$D$ a division ring, there are only two possibilities for the image
of a map $M^n\to M$ induced by an element of $R^n;$ so to get
useful structure, we should
instead look at images of maps $M\to M^n$ induced by such elements.
If we treat elements of $R^n$ as row vectors,
acting on a right $\!R\!$-module $M,$ we find that
the set of elements of $R^n$ determining maps whose images lie in
(say) the image of a given such map will not, in general, be a
right $\!R\!$-submodule of $R^n,$ but will be a left $\!R\!$-submodule.
We could ask when the resulting left-$\!R\!$-submodule-valued
closure operators satisfy the left-right duals of
conditions~\eqref{d.cl_cl}-\eqref{d.cl_exch}, which, by
symmetry, would also lead to homomorphisms into division rings;
but let us, instead, stay in the context
of~\eqref{d.cl_cl}-\eqref{d.cl_exch}
by starting with a {\em left} $\!R\!$-module $L,$
and carrying out the left-right dual of the construction just sketched.
Thus, given $L,$ we again regard elements
of $R^n$ as column vectors, but now let them act on the
left on $L,$ mapping elements of $L$ to column vectors over $L.$
We now define closure operators $\cl_{R^n}$ $(n\geq 0)$
by specifying that for $S\subseteq R^n,$
\begin{equation}\begin{minipage}[c]{35pc}\label{d.cl_fr_L}
$\cl_{R^n}(S)\ =\ \{x\in R^n\mid
x\,L\subseteq \sum_{s\in S} s\,L\}.$
\end{minipage}\end{equation}

We easily obtain the analog of Lemma~\ref{L.cl_fr_M}:

\begin{lemma}\label{L.cl_fr_L}
Let $R$ be a ring and $L$ a nonzero left $\!R\!$-module, and
for each $n\geq 0$ let $\cl_{R^n}$ be defined by~\eqref{d.cl_fr_L}.
Then this family of operators satisfies
conditions~\eqref{d.cl_cl}, \eqref{d.cl_proper} and \eqref{d.cl_homs}.
For each $n,$ the condition that $\cl_{R^n}$ also
satisfy the exchange property of~\eqref{d.cl_exch}
\textup{(}which, as noted, is equivalent
to~\eqref{d.cl_exch-}\textup{)} is equivalent
to each of the following three statements.
\begin{equation}\begin{minipage}[c]{35pc}\label{d.cl_exch_iff_L}
There do not exist a subset $S\subseteq R^n$ and elements
$u,t\in R^n$ such that\\
$\sum_{s\in S}s\,L\ \subsetneqq
\ \sum_{s\in S\cup\{u\}}s\,L\ \subsetneqq
\ \sum_{s\in S\cup\{t\}}s\,L.$
\end{minipage}\end{equation}
\begin{equation}\begin{minipage}[c]{35pc}\label{d.cl_exch_iff_mx-_L}
There do not exist an $n\times n{-}1$ matrix $A$
and $n\times n$ matrices $B$ and $C,$ each obtained by adding a single
column to $A,$ such that
\ $A\,L^{n-1}\ \subsetneqq\ B\,L^n\ \subsetneqq\ C\,L^n.$
\end{minipage}\end{equation}
\begin{equation}\begin{minipage}[c]{35pc}\label{d.cl_exch_iff_mx_L}
There do not exist $n\times n$ matrices $A,$ $B,$ $C$
over $R$ which all agree except in one column, such that
\ $A\,L^n\ \subsetneqq\ B\,L^n\ \subsetneqq\ C\,L^n.$
\end{minipage}\end{equation}

Hence, if a ring $R$ has a faithful left module $L$ which
for all $n\geq 0$ satisfies~\eqref{d.cl_exch_iff_L}, equivalently,
\eqref{d.cl_exch_iff_mx-_L}, equivalently, \eqref{d.cl_exch_iff_mx_L},
then $R$ is embeddable in a division ring.
\end{lemma}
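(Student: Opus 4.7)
The plan is to follow the proof of Lemma~\ref{L.cl_fr_M} line by line, with ``ascending chains of images $sL$'' in place of ``descending chains of annihilators $\r{ann}_{M^n}(s),$'' and to use the elementary observation that a subset $A\subseteq R^n$ is $\cl_{R^n}\!$-closed if and only if $A=\{x\in R^n\mid xL\subseteq N\}$ for some additive subgroup $N\subseteq L^n$ (the ``only if'' holds with $N=\sum_{a\in A}aL,$ and the right-hand side is closed because the $N$'s one could write down for it all give the same set).

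For~\eqref{d.cl_cl} I would check the closure axioms directly; the only nontrivial point is that $\cl_{R^n}(S)$ is a right $\!R\!$-submodule of $R^n,$ which follows from the identity $(xr)L=x(rL)\subseteq xL$ for $x\in R^n$ and $r\in R.$ For~\eqref{d.cl_proper} I would pick any nonzero $\ell\in L$ and observe that $e_i L$ contains the nonzero column $e_i\ell\in L^n,$ so $e_i\notin\cl_{R^n}(\emptyset).$ For~\eqref{d.cl_homs}, letting $h\colon R^m\to R^n$ be left multiplication by an $n\times m$ matrix $H,$ I would introduce the additive map $H_L\colon L^m\to L^n$ given by the same matrix $H$ acting on columns of elements of $L.$ The associativity identity $(Hy)\ell=H_L(y\ell)$ then yields, for any closed $A=\{x:xL\subseteq N\},$
\[ h^{-1}(A)=\{y\in R^m:H_L(yL)\subseteq N\}=\{y\in R^m:yL\subseteq H_L^{-1}(N)\}, \]
and since $H_L^{-1}(N)$ is again an additive subgroup of $L^m,$ $h^{-1}(A)$ is closed.

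The three characterizations of the exchange property are handled uniformly. Condition~\eqref{d.cl_exch_iff_L} is the direct translation: using that $u\in\cl_{R^n}(S)$ iff $uL\subseteq\sum_{s\in S}sL,$ failure of the exchange property becomes precisely the existence of the asserted strictly ascending length-2 chain. To pass to~\eqref{d.cl_exch_iff_mx-_L} I would invoke Lemma~\ref{L.<n}(i) to restrict to subsets of cardinality $<n,$ then view the elements of such an $S$ as the columns of an $n\times n{-}1$ matrix $A$ over $R$ (padding with zero columns if $|S|<n-1$), so that $\sum_{s\in S}sL=AL^{n-1}$ and similarly $\sum_{s\in S\cup\{u\}}sL=BL^n,$ $\sum_{s\in S\cup\{t\}}sL=CL^n$ for the augmented matrices $B$ and $C.$ The equivalence of~\eqref{d.cl_exch_iff_mx-_L} with~\eqref{d.cl_exch_iff_mx_L} is cosmetic, just as in Lemma~\ref{L.cl_fr_M}: given the $n\times n$ matrices $A,B,C$ of~\eqref{d.cl_exch_iff_mx_L} differing only in one column, deleting that column from $A$ produces an $n\times n{-}1$ matrix $A'$ with $A'L^{n-1}\subseteq AL^n\subsetneqq BL^n\subsetneqq CL^n,$ which is the chain required by~\eqref{d.cl_exch_iff_mx-_L}; conversely, adjoining a zero column to $A$ in~\eqref{d.cl_exch_iff_mx-_L} produces an $n\times n$ matrix $\tilde A$ with $\tilde A L^n=AL^{n-1}.$

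The final assertion uses that $\cl_R(\emptyset)=\{x\in R:xL=0\}$ is the annihilator of the left module $L,$ which is $\{0\}$ when $L$ is faithful; the conclusion then follows from \cite[(21) and Theorem~22]{sfd_fr_mtrd} exactly as at the end of Lemma~\ref{L.cl_fr_M}. I do not foresee a real obstacle: the main subtlety is that, unlike the right-module case where closed sets are literally annihilators of subsets of $M^n,$ here closed sets are parametrized by arbitrary additive subgroups of $L^n$ rather than by left $\!R\!$-submodules, but this is harmless since $H_L^{-1}$ need only preserve additive subgroups.
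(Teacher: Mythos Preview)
Your proposal is correct and follows essentially the same approach as the paper: the paper's own proof simply says the verifications of~\eqref{d.cl_cl}--\eqref{d.cl_homs} are ``straightforward, with that of~\eqref{d.cl_homs} using associativity of matrix multiplication,'' and that the equivalences ``parallel the proofs for~\eqref{d.cl_exch_iff}, \eqref{d.cl_exch_iff_mx-}, and~\eqref{d.cl_exch_iff_mx}.'' You have spelled out exactly those parallel details, including the useful observation that closed sets are the sets $\{x:xL\subseteq N\}$ for additive subgroups $N\subseteq L^n,$ which is what makes the verification of~\eqref{d.cl_homs} go through cleanly.
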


\begin{proof}[Sketch of proof]
Again, the verifications of~\eqref{d.cl_cl}, \eqref{d.cl_proper}
and~\eqref{d.cl_homs} are straightforward, with that
of~\eqref{d.cl_homs} using associativity of matrix multiplication.
The proofs that~\eqref{d.cl_exch_iff_L},
\eqref{d.cl_exch_iff_mx-_L}, and~\eqref{d.cl_exch_iff_mx_L}
are all equivalent to~\eqref{d.cl_exch} parallel
the proofs for~\eqref{d.cl_exch_iff},
\eqref{d.cl_exch_iff_mx-}, and~\eqref{d.cl_exch_iff_mx}.
\end{proof}

(Let us note examples showing that in the situations of the
above two lemmas, condition~\eqref{d.cl_finitary} need not
hold if~\eqref{d.cl_exch} does not.
Let $k$ be a field, let $R\subseteq k^\N$ be the subring of all
{\em eventually constant} sequences of elements of $k,$
let $M=R,$ and let $L=S=$ the ideal of eventually-zero sequences.
It is not hard to verify that under the closure operator
$\cl_R,$ defined either as in Lemma~\ref{L.cl_fr_M} using $M$ or as in
Lemma~\ref{L.cl_fr_L} using $L,$ $\cl_R(S)=R,$ while the closure of any
finite subset of $S$ is the ideal that it generates, hence does
not contain $1\in R;$ so
$\cl(S)\neq\bigcup_{\,\mbox{\scriptsize finite}
\,S_0\subseteq S}\,\cl(S_0).)$
\vspace{.4em}

We remark that the description of matroids in terms of closure
operators is only one of many
surprisingly diverse, though ultimately equivalent, ways of
developing that concept \cite[Chapter~1]{Welsh}.
Moreover, matroid structures on modules $R^n,$ discussed above,
and prime matrix ideals, considered in \S\ref{S.PMC_versions} below,
are just two of several ways of describing
the data that determine a homomorphism from $R$ into a division ring;
for others, see~\cite{Malcolmson2}.

\section{The idea of Dubrovin's result}\label{S.bij}

Let us change gears, and in this and the next two
sections develop the result:
\begin{equation}\begin{minipage}[c]{35pc}\label{d.Dubrovin}
(After N.\,I.\,Dubrovin \cite{Dubrovin}.)
For $G$ a right ordered group, the right action of every nonzero
element of $kG$ on $k((G))$ is invertible.
\end{minipage}\end{equation}

In this section we sketch what is involved; in \S\ref{S.wo} we look
at an order-theoretic tool that can ``organize'' the proof,
and in \S\ref{S.bij_via_Higman}, we apply that tool
to recover~\eqref{d.Dubrovin}.
In \S\S\ref{S.further?}-\ref{S.either/or} we shall return to the
ideas of \S\ref{S.M} above, and note a plausible generalization
of~\eqref{d.Dubrovin} which, if true, would imply
that the $\!kG\!$-module $k((G))$ satisfies~\eqref{d.cl_exch_iff}.

Incidentally, Dubrovin \cite{Dubrovin} assumes $G$
{\em left}-ordered and regards $k((G))$ as a left $\!kG\!$-module; but
the results for left- and right-ordered groups are clearly equivalent.
He also develops his result with $k$ a general division ring,
and with the multiplication of the group
ring skewed by an action of $G$ on $k$ by automorphisms.
In this note, I restrict attention to the case where $k$ is
a field and $G$ centralizes $k,$ simply because
the added generality would be a distraction.
But the generalizations
mentioned seem to involve no fundamental complications, so
if further results are eventually obtained for the
case discussed here, the techniques are likely
to go over to those more general cases.

(Dubrovin also proves in~\cite{Dubrovin_cuts}, \cite{Dubrovin_GL2}
that group rings of certain particular classes of
right-ordered groups are embeddable in division rings;
but we are here concerned with what one can hope to prove for
arbitrary right-ordered groups.)

So let $G$ be a right ordered group and $x$ an element of $kG-\{0\}.$
The easy half of~\eqref{d.Dubrovin} is
that the action of $x$ on $k((G))$ is one-to-one,
i.e., that for any $a\in k((G))-\{0\}$ we have $ax\neq 0.$
To see this, let $g_0$ be the least element of the support of $a.$
Then for each $h\in\supp(x),$ since right multiplication
by $h$ preserves the order of $G,$ the least
element of $\supp(a\,h)$ is $g_0\,h.$
Since left multiplication by $g_0$ is one-to-one on $G,$
the finitely many well-ordered sets $\supp(a\,h)$ $(h\in\supp(x))$
have distinct least elements $g_0\,h;$ so the least of these
least elements appears exactly once when we evaluate $a\,x.$
Hence $a\,x\neq 0.$

Note, however, that
we cannot say a priori which $h\in\supp(x)$ will make $g_0\,h$
the least element of the support of $a\,x.$
It need not be the least element of $\supp(x),$
since {\em left} multiplication by $g_0$ does not, in general, preserve
the ordering of $G.$

Nevertheless, given $x\in kG-\{0\},$
the function associating to each $g\in G$
the least product $g\,h$ for $h\in\supp(x)$
will be an order-preserving bijection $G\to G.$
To see that it is order-preserving and one-to-one, let $g_0<g_1\in G,$
and take an $a\in k((G))$ with $\supp(a)=\{g_0,g_1\}.$
Then our observation that the least element of $\supp(a)\cdot\supp(x)$
has the form $g_0\,h,$ and occurs just once, shows that
$g_0\,h\in g_0\,\supp(x)$ must be distinct from
the least element of $g_1\,\supp(x),$ and less than it, as asserted.

To see that the function $G\to G$ of the
preceding paragraph is also surjective,
take any $g\in G,$ which we wish to show is in its range.
Let $h_0$ be the member of $\supp(x)$ that {\em maximizes}
$g\,h_0^{-1}.$
Note that $g\in(g\,h_0^{-1})\,\supp(x);$
I claim $g$ is the smallest element of that set.
For taking any $h_1\neq h_0$ in $\supp(x),$ by choice of $h_0$ we have
$g\,h_1^{-1}<g\,h_0^{-1},$ hence,
right multiplying by $h_1,$ we get $g<(g\,h_0^{-1})\,h_1,$ so $g$ is
indeed the least element of $(g\,h_0^{-1})\,\supp(x),$ as claimed.

Let us give the function we have defined a name.
\begin{equation}\begin{minipage}[c]{35pc}\label{d.rho}
Suppose $x\in kG-\{0\}.$
For each $g\in G,$ we shall write
$\rho_{\supp(x)}(g)$ for the least element of $g\cdot\supp(x).$
Thus, as shown
above, $\rho_{\supp(x)}$ is an order-preserving bijection $G\to G.$
\end{minipage}\end{equation}
Here $\rho_{\supp(x)}$ is mnemonic for the fact that the operation
involves {\em right} multiplication by $\supp(x).$

We can now approach the task of
showing that right multiplication by $x\in kG-\{0\}$
is surjective as a map $k((G))\to k((G)).$
Given $a\in k((G)),$ we want to construct
$b\in k((G))$ such that $b\,x=a.$
If $a=0$ there is no problem; if not, let $g_0$ be the
least element of $\supp(a).$
From the above discussion, we see
that the least element of $\supp(b)$ has to be
$\rho_{\supp(x)}^{-1}(g_0).$
Attaching to this the appropriate coefficient in $k,$
we get a first approximation to $b;$ an element $b_0\in k((G))$
whose product with $x$ has the correct lowest term.

Now let $a_1=a-b_0\,x.$
If this is zero, we are again done; if not, we let $g_1$
be the least element of its support, and repeat the process.

But can we continue this process transfinitely?
When we come to a limit ordinal $\alpha,$ will the
expression $b_\alpha$ that the previously
constructed expressions $b_\beta$ $(\beta<\alpha)$
converge to have well-ordered support?

Dubrovin shows by a transfinite induction that this
does indeed hold at every step.
As a variant approach, we shall recall in the next section a
general result on ordered sets, going back to
G.\,Higman, using which we can obtain a
well-ordered subset $Y\subseteq G$ such that the process
sketched above keeps the supports of the $b_\alpha$ within $Y.$
What we have looked at as a process of
successively modifying elements $\!b_\alpha\!$ then becomes
a transfinite coefficient-by-coefficient calculation
of $b,$ indexed by the well-ordered set~$Y.$

The result on ordered sets is quite
powerful, so we can hope that it will also
be applicable to studying solutions to several linear equations
in several unknown elements of $k((G)),$ as might be needed
to combine the idea of Dubrovin's result with the approach
of~\S\S\ref{S.mtrd}-\ref{S.M}.

\section{Generating well-ordered sets}\label{S.wo}

What sort of order-theoretic result do we need?
Given $a\in k((G))$ and $x\in kG,$ we want to modify
the former by subtracting off a multiple of the latter
having the same least term, and iterate this process.
At steps after the first, the least element in the
support of our modified $a$ might be one of the elements
of the original support, or an element in the support of
one of the terms we have subtracted off.
What we can say is that it lies in
the closure of $\supp(a)$ under adjoining, for every element
$g$ that at some stage is in our set, all the other elements of the
unique left multiple of $\supp(x)$ having $g$ for its least member.
We want to know that this closure, like $\supp(a),$ is well-ordered.

If $\supp(x)$ has $n$ elements, then the above
construction can be thought of as closing $\supp(a)$ under $n(n-1)$
{\em partial} functions.
Indeed, given $g\in G,$ and distinct
elements $h_0,\,h_1\in\supp(x),$ we are interested in
$g\,h_0^{-1} h_1$ {\em if} the left translate of
$\supp(x)$ which has $g$ as its least member is
$g\,h_0^{-1}\supp(x),$ the translate in which $h_0$ is carried to $g.$
So to each pair $h_0\neq h_1$ of elements of $\supp(x),$
let us associate the partial function $G\to G$ which, if $g$
is the least element of $(g\,h_0^{-1})\,\supp(x),$ takes $g$ to
$g\,h_0^{-1}\,h_1,$ but is undefined otherwise.

These $n(n-1)$ partial functions are partial {\em unary}
operations on $G;$ but the order-theoretic arguments to be used
can in fact handle partial operations of arbitrary finite arities.
This suggests a general formulation that would start with a well-ordered
set of elements (corresponding to $\supp(a)),$
and a finite family of partial finitary operations.
But the members of the former set can be thought of as zeroary
operations;
so if we allow our {\em finitely many} finitary operations to be
replaced by {\em well-ordered families} of such operations -- one such
family for each of finitely many arities -- then
we can treat the given set of elements as one of these families.

Finally, the assumption that the set on which we are operating
(in our case, $G)$ is given with a total ordering, and our families
of operations are indexed by well-ordered sets, can be weakened to
make the given set partially ordered, and the families of operations
``well-partially-ordered'', i.e., having
descending chain condition and no infinite antichains.
Indeed, in the case we are interested in,
there is no natural order to put on the
pairs $(h_0,h_1)$ indexing our unary partial operations;
and though $G$ is totally ordered, if we hope to generalize
our result from single relations $a\,x=b$ to families
of relations on tuples $(a_1,\dots,a_n)\in k((G))^n,$ then
the elements $a_1,\dots,a_n$ will be
multiplied by different elements of $kG,$ so it would make
most sense to regard their supports as belonging
to a union of $n$ copies of $G,$ each ordered as $G$ is,
but with elements of the different copies incomparable.

A result of the sort suggested
above was proved by G.\,Higman \cite{Higman},
except that he started with everywhere-defined functions.
However, his proof goes over without modification to partial functions.
We state the result, so generalized, and in modern language, below.
(We drop a different sort of generality in the formulation
of Higman's result.
Namely, where we assume a partial ordering on $X,$
he only assumed a preordering.
But the hypotheses of his result imply that each of his operations,
when applied to elements equivalent
under the equivalence relation determined by
the preordering (cf.~\cite[Proposition~5.2.2]{245}),
gives equivalent outputs.
From this it can be
deduced that his conclusion about the preordered set is equivalent
to the corresponding statement about the partially ordered set
gotten by dividing out by that equivalence relation.)

We shall denote the action of a partial function by ``$\po$''.

\begin{theorem}[{after G.\,Higman \cite[Theorem~1.1]{Higman}}]\label{T.Higman}
Let $X$ be any partially ordered set,
let $I_0,\dots,I_{N-1}$ be well-partially-ordered sets for some
$N\geq 0,$ and suppose that for each $n\in\{0,\dots,N{-}1\}$
we are given a partial function $s_n:X^n\times I_n\po X.$
Suppose further that each $s_n$ is, on the one
hand, {\em isotone} \textup{(}i.e., if $p,q\in X^n\times I_n$
lie in the domain of $s_n,$
and $p\leq q$ under coordinatewise comparison,
then $s_n(p)\leq s_n(q)),$ and, on the other hand,
{\em nondecreasing} in its arguments in $X$ \textup{(}i.e., if
$s_n$ is defined at $p=(x_0,\dots,x_{n-1},i)\in X^n\times I_n,$
then $s_n(p)\geq x_m$ for all $m<n).$

Let $Y$ be the subset of $X$ generated by the above
operations; that is, the least subset of $X$ with the property that
$s_n(x_0,\dots,x_{n-1},i)\in Y$ whenever $0\leq n<N,$
$x_0,\dots,x_{n-1}\in Y,$ $i\in I_n,$ and $s_n$
is defined on $(x_0,\dots,x_{n-1},i).$
Then $Y$ is well-partially-ordered.

\textup{(}Remark: if $I_0$ is empty, then $Y$ is empty.
It is the elements $s_0(i)$ $(i\in I_0)$
that ``start'' the process that generates~$Y.)$
\end{theorem}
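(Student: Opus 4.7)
The plan is to follow the classical Nash-Williams minimal bad sequence argument, adapted to the partial-operation setting. Recall that a partially ordered set is well-partially-ordered if and only if every infinite sequence of elements contains an infinite weakly ascending subsequence (equivalently, no infinite strictly descending chain and no infinite antichain). So I would argue by contradiction, assuming that there is a \emph{bad} sequence $y_0,y_1,y_2,\dots$ in $Y$, meaning one with $y_i\not\leq y_j$ for all $i<j$, and aim to derive a contradiction from the inductive character of $Y$.

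First I would assign to each $y\in Y$ a rank $r(y)\in\mathbb{N}$, namely the least natural number such that $y$ admits a generation-tree of depth $\leq r(y)$ (or alternatively the least size of such a tree); well-foundedness of $\mathbb{N}$ is all that matters. Then, following Nash-Williams, I would select a \emph{minimal} bad sequence: choose $y_0$ of smallest rank among initial terms of bad sequences; having chosen $y_0,\dots,y_{k-1}$, choose $y_k$ of smallest rank such that $y_0,\dots,y_k$ extends to a bad sequence. Next, for each $k$, fix a generating expression $y_k=s_{n_k}(x^{(k)}_0,\dots,x^{(k)}_{n_k-1},i_k)$ realizing the rank. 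Because $n_k\in\{0,\dots,N-1\}$, by pigeonhole some value $n$ is taken infinitely often; pass to that subsequence and rename, so that now $y_k=s_n(x^{(k)}_0,\dots,x^{(k)}_{n-1},i_k)$ for all $k$.

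The crucial step is to observe that the set $Z$ consisting of all children $x^{(k)}_m$ (for all $k$ and $0\leq m<n$), with the order inherited from $X$, is well-partially-ordered. Indeed, any bad sequence $z_0,z_1,\dots$ inside $Z$ has its first term $z_0$ equal to some $x^{(k)}_m$, and then $y_0,\dots,y_{k-1},z_0,z_1,\dots$ is a bad sequence in $Y$: badness within the $z$'s is given, badness of the $y$'s follows from the original sequence being bad, and no $y_j$ can be $\leq$ any $z_\ell$ because otherwise, by the nondecreasing property applied to the expression giving $z_\ell=x^{(k)}_m\leq y_k$, we would obtain $y_j\leq y_k$, contradicting badness. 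But $r(z_0)<r(y_k)$, violating the minimality of the chosen bad sequence at position $k$. Hence $Z$ is well-partially-ordered.

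With $Z$ wpo and each $I_n$ wpo by hypothesis, the product $Z^n\times I_n$ is wpo (finite products of wpo's are wpo, since any infinite sequence in $Z^n\times I_n$ admits, by iterated extraction of ascending subsequences, an infinite coordinatewise weakly ascending subsequence). Applying this to the sequence $(x^{(k)}_0,\dots,x^{(k)}_{n-1},i_k)_{k\geq 0}$, we find $j<k$ with $(x^{(j)}_0,\dots,x^{(j)}_{n-1},i_j)\leq(x^{(k)}_0,\dots,x^{(k)}_{n-1},i_k)$ coordinatewise. By the isotonicity hypothesis on $s_n$, $y_j=s_n(x^{(j)}_0,\dots,x^{(j)}_{n-1},i_j)\leq s_n(x^{(k)}_0,\dots,x^{(k)}_{n-1},i_k)=y_k$, contradicting the badness of our sequence. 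The main obstacle, as usual for this style of argument, lies in step three: setting up the rank function carefully enough that minimality genuinely forces $Z$ to be wpo, and verifying that the nondecreasing hypothesis is what prevents a child-term from being $\geq$ an earlier term of the minimal bad sequence. The remark about empty $I_0$ is then automatic, since without nullary operations the inductive definition of $Y$ produces nothing.
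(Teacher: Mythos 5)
Your route is genuinely different from the paper's: the paper proves Theorem~\ref{T.Higman} by observing that Higman's original argument (via his Theorem~2.6, an induction over tuples of well-partially-ordered sets) goes through verbatim for partial operations, whereas you give a Nash--Williams minimal-bad-sequence proof. That strategy does work here, and your skeleton (rank via generation trees, minimal bad sequence, wpo-ness of the set of children, pigeonhole on arity, wpo-ness of $Z^n\times I_n,$ isotonicity for the final contradiction) is the right one. But the step you yourself flag as the crux -- that $Z$ is well-partially-ordered -- is not correctly verified as written. Given a bad sequence $z_0,z_1,\dots$ in $Z,$ you splice it onto $y_0,\dots,y_{k-1},$ where $y_k$ is the parent of $z_0,$ and argue badness of the spliced sequence because $y_j\leq z_\ell$ would give, via the nondecreasing hypothesis, $y_j\leq z_\ell\leq y_{k_\ell}$ (the parent of $z_\ell$) and hence contradict badness of $(y_i).$ That contradiction requires $j<k_\ell;$ but a later term $z_\ell$ may be a child of an \emph{earlier} parent, with $k_\ell\leq j<k,$ in which case $y_j\leq y_{k_\ell}$ contradicts nothing and the spliced sequence need not be bad. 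The standard repair: since the parent indices are natural numbers, choose $\ell_0$ with $k_{\ell_0}$ minimal among all $k_\ell,$ and splice the tail $z_{\ell_0},z_{\ell_0+1},\dots$ onto $y_0,\dots,y_{k_{\ell_0}-1};$ then every surviving $z_\ell$ has $k_\ell\geq k_{\ell_0}>j,$ the spliced sequence is bad, and $r(z_{\ell_0})<r(y_{k_{\ell_0}})$ contradicts the minimal choice at position $k_{\ell_0}.$

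A secondary issue of the same kind: you pass to the arity-$n$ subsequence \emph{before} invoking minimality, but a subsequence of a minimal bad sequence is not itself minimal, so ``minimality at position $k$'' is not available for the renamed sequence. The clean order is: form the minimal bad sequence; prove (as above) that the set $Z$ of all children occurring in the chosen rank-minimal expressions of \emph{all} its terms is wpo; only then pigeonhole on the arities $n_k$ and apply wpo-ness of $Z^{n}\times I_{n}$ to the tuples $(x^{(k)}_0,\dots,x^{(k)}_{n-1},i_k)$ with $n_k=n,$ obtaining original indices $j<k$ with $(x^{(j)}_0,\dots,x^{(j)}_{n-1},i_j)\leq(x^{(k)}_0,\dots,x^{(k)}_{n-1},i_k)$ and hence, by isotonicity, $y_j\leq y_k,$ contradicting badness of the original sequence. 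With these two adjustments your argument is complete and self-contained, at the cost of dependent choice in forming the minimal bad sequence; the paper instead simply defers to Higman's inductive proof.
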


\begin{proof}
As in \cite{Higman}.
\end{proof}

I recommend Higman's proof as a tour-de-force worth reading.
(His Theorem~2.6, used in that proof, is a method of induction
over the class of all $\!n\!$-tuples of well-partially-ordered sets.)

\section{Recovering Dubrovin's bijectivity result}\label{S.bij_via_Higman}

Let us now, with the help of the above result,
prove the bijectivity of the right action
on $k((G))$ of every nonzero element of $kG.$

Given
\begin{equation}\begin{minipage}[c]{35pc}\label{d.x,a}
$a\in k((G))$\quad and\quad $x\in kG-\{0\},$
\end{minipage}\end{equation}
we wish to find $b\in k((G))$ such that $bx=a.$
With this goal, we start by applying Theorem~\ref{T.Higman}
with $N=2,$ and the following choices of $X,$
$I_n$ and $s_n$ $(n<2):$
\begin{equation}\begin{minipage}[c]{35pc}\label{d.X,N}
$X=G,$ with its given right ordering.
\end{minipage}\end{equation}
\begin{equation}\begin{minipage}[c]{35pc}\label{d.I_0}
$I_0=\supp(a),$ with $s_0:I_0\to X$ given by (the restriction
to $I_0$ of) $\rho_{\supp(x)}^{-1},$ defined in~\eqref{d.rho}.
Thus, $s_0$ takes each $g\in\supp(a)$ to the
$g'\in G$ such that $g$ is the least element of $g'\,\supp(x).$
\end{minipage}\end{equation}
\begin{equation}\begin{minipage}[c]{35pc}\label{d.I_1}
$I_1=$ the finite set $\{(h_0,h_1)\in\supp(x)^2\mid h_0\neq h_1\},$
given with the antichain ordering (making distinct elements
incomparable), and $s_1: G\times I_1\po G$ is defined by
$s_1(g,(h_0,h_1))=g\,h_0^{-1}h_1$
if $g$ is the least element of $g\,h_0^{-1}\,\supp(x)$
(equivalently, if $\rho_{\supp(x)}^{-1}(g)=g\,h_0^{-1}),$ and
is undefined otherwise.
\end{minipage}\end{equation}

Thus, $s_1$ encodes the $n(n-1)$ partial functions discussed in the
second paragraph of the preceding section.

To see that the hypotheses of Theorem~\ref{T.Higman} are satisfied,
note that $I_0$ and $I_1$ are well-partially-ordered, the former
because, being the support of an element of $k((X)),$
it is well-ordered, the latter because, though
an antichain, it is finite.
Since $s_0$ has no arguments in $X,$
it only needs to be isotone in its argument in $I_0,$
which it is, because $\rho_{\supp(x)}^{-1}$ is an
order-automorphism of $G.$
Since $I_1$ is an antichain, $s_1$ need only
be isotone and non-decreasing in its argument in $G.$
It is isotone in that argument because it is given, when defined,
by {\em right} multiplication by the element $h_0^{-1}h_1.$
By its definition, it is non-decreasing
(in fact, increasing) in that argument when defined.

Thus, Theorem~\ref{T.Higman} yields a
subset $Y\subseteq G$ closed under the above operations
and well-partially-ordered; which,
since $G$ is totally ordered, means well-ordered.
Closure under the operation of~\eqref{d.I_0} means that
$\rho_{\supp(x)}^{-1}(\supp(a))\subseteq Y,$
while closure under the operations of~\eqref{d.I_1}
says that for each $g\in Y$ we also have
$\rho_{\supp(x)}^{-1}(g)\cdot\supp(x)\subseteq Y.$
Note, finally, that the definition~\eqref{d.rho} of $\rho_{\supp(x)}$
shows that
\begin{equation}\begin{minipage}[c]{35pc}\label{d.a-b'x}
For every $b'\in k((G))$ having support in $Y,$
we have $\supp(a-b'x)\subseteq \rho_{\supp(x)}(Y).$
\end{minipage}\end{equation}

Let us now construct by recursion elements $\beta_g\in k$
for all $g\in Y,$ such that $(\sum_{g\in Y}\beta_g\,g)\,x=a.$
To do this, assume recursively that for some $g\in Y$ we have found
$\beta_{g'}$ for all $g'<g$ in $Y,$ such that for each $g_0<g,$
\begin{equation}\begin{minipage}[c]{35pc}\label{d.g_0_approx}
all elements of\,
$\supp(a-(\sum_{g'\in Y,\, g'\leq g_0}\,\beta_{g'}\,g')\,x\,)$
are $>\rho_{\supp(x)}(g_0).$
\end{minipage}\end{equation}
Then applying~\eqref{d.g_0_approx} to the greatest $g_0<g$ in $Y$ if
there is one, or passing to the ``limit'' gotten by taking
the union of the ranges of summation in~\eqref{d.g_0_approx}
if there is not, we can say that
\begin{equation}\begin{minipage}[c]{35pc}\label{d.approx_lim}
all elements of\,
$\supp(a-(\sum_{g'\in Y,\, g'<g}\,\beta_{g'}\,g')\,x\,)$
are $>\rho_{\supp(x)}(g_0)$ for all $g_0<g$ in $Y.$
\end{minipage}\end{equation}
(The two changes from~\eqref{d.g_0_approx} are in the
range of summation, and the final quantification of $g_0.)$
Since $\rho_{\supp(x)}$ is an order automorphism of $G,$
we see from~\eqref{d.a-b'x} that
the condition ``\!$>\rho_{\supp(x)}(g_0)$ for all $g_0<g$ in $Y\!$''
is equivalent to ``$\geq \rho_{\supp(x)}(g)$'', so~\eqref{d.approx_lim}
says
\begin{equation}\begin{minipage}[c]{35pc}\label{d.approx}
all elements of\,
$\supp(a-(\sum_{g'\in Y,\, g'<g}\,\beta_{g'}\,g')\,x\,)$
are $\geq \rho_{\supp(x)}(g).$
\end{minipage}\end{equation}

Given~\eqref{d.approx} for some $g\in Y,$ let $\gamma\in k$ be the
coefficient of $\rho_{\supp(x)}(g)$ in
$a-(\sum_{g'\in Y,\, g'<g}\,\beta_{g'}\,g')\,x,$
let $h=g^{-1}\,\rho_{\supp(x)}(g)\in\supp(x),$
and let $\delta$ be the coefficient of $h$ in $x.$
Then letting $\beta_g=\gamma\,\delta^{-1},$ we see that
this is the unique choice of coefficient for $g$ that
will lead to~\eqref{d.g_0_approx} holding with $g$ in place of $g_0.$

Constructing $\beta_g$ in this way for each $g\in Y,$
and taking $b=\sum_{g\in Y}\,\beta_g\,g,$ we
find that $bx=a,$ as desired.
From the way our recursive construction has forced a unique value
for each $\beta_g$ $(g\in Y),$ it is not hard to deduce
that $b$ is unique for that
property (though this uniqueness is most easily seen
as in~\S\ref{S.bij}).
Thus we have

\begin{theorem}[after N.\,I.\,Dubrovin \cite{Dubrovin}]\label{T.Dubrovin}
If $G$ is a right-ordered group, and $x\in kG-\{0\},$ then the
action of $x$ on the right $\!kG\!$-module $k((G))$ is bijective.\qed
\end{theorem}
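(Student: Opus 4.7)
The plan is to split the theorem into injectivity and surjectivity. Injectivity is the easy half, already sketched in \S\ref{S.bij}: for $a \in k((G)) \setminus \{0\}$ with least support element $g_0$, the term at $\rho_{\supp(x)}(g_0)$ appears exactly once in $a\,x$, so $a\,x \neq 0$.

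For surjectivity I would fix $a \in k((G))$ and $x \in kG - \{0\}$, and seek $b = \sum_{g} \beta_g\,g \in k((G))$ with $b\,x = a$. The first step is to produce, \emph{in advance}, a well-ordered set $Y \subseteq G$ that will contain $\supp(b)$. To this end I would apply Theorem~\ref{T.Higman} with $X = G$ (totally ordered), $N = 2$, $I_0 = \supp(a)$ (well-ordered), $s_0 = \rho_{\supp(x)}^{-1}|_{I_0}$, $I_1 = \{(h_0, h_1) \in \supp(x)^2 \mid h_0 \neq h_1\}$ (finite, hence well-partially-ordered with the antichain order), and $s_1(g, (h_0, h_1)) = g\,h_0^{-1} h_1$ defined exactly when $g$ is the least element of $g\,h_0^{-1}\,\supp(x)$. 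The hypotheses of Higman's theorem are checked quickly: $\rho_{\supp(x)}^{-1}$ is an order-automorphism of $G$, so $s_0$ is isotone; and $s_1$, where defined, is right multiplication by $h_0^{-1} h_1$, hence isotone and strictly increasing in its $G$-argument. The resulting generated set $Y$ is well-partially-ordered, hence (since $G$ is totally ordered) well-ordered, and it satisfies $\rho_{\supp(x)}^{-1}(\supp(a)) \subseteq Y$ together with $\rho_{\supp(x)}^{-1}(g)\cdot \supp(x) \subseteq Y$ for every $g \in Y$.

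The second step is a transfinite recursion down $Y$ that determines the coefficients $\beta_g$. The invariant I would maintain is that, after all $g' < g$ in $Y$ have been assigned, the residual $a - \bigl(\sum_{g' \in Y,\, g' < g} \beta_{g'}\,g'\bigr)\,x$ has every support element $\geq \rho_{\supp(x)}(g)$. At a successor stage this follows from the definition of $\beta_{g_0}$ at the previous step; at a limit stage, one uses that~\eqref{d.a-b'x} constrains the residual's support to lie in $\rho_{\supp(x)}(Y)$, so the ``strictly above $\rho_{\supp(x)}(g_0)$ for all $g_0 < g$'' condition collapses (via the order-isomorphism $\rho_{\supp(x)}$) to ``$\geq \rho_{\supp(x)}(g)$''. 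Given the invariant at stage $g$, the coefficient $\beta_g$ is forced: reading off $\gamma \in k$ as the coefficient of $\rho_{\supp(x)}(g)$ in the residual, setting $h = g^{-1}\rho_{\supp(x)}(g) \in \supp(x)$ and letting $\delta$ be its coefficient in $x$, take $\beta_g = \gamma\,\delta^{-1}$. Summing over $g \in Y$ produces $b \in k((G))$ with $b\,x = a$, and the forced nature of each $\beta_g$ re-proves uniqueness.

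The main obstacle, and the step that does all the real work, is producing the well-ordered set $Y$ before any coefficients are chosen. A naive transfinite construction of $b$ one coefficient at a time threatens to run into limit ordinals at which the already-chosen support fails to remain well-ordered; Higman's theorem is exactly the tool that certifies well-orderedness \emph{a priori}, by packaging the two ways new support elements can arise (from $\supp(a)$ on the one hand and from cancellation against other terms of $x$ on the other) as isotone partial operations on $G$ indexed by well-partially-ordered sets. Once $Y$ is in place, the recursion is mechanical.
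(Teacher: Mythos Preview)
Your proposal is correct and follows essentially the same route as the paper: the same choices of $X$, $I_0$, $s_0$, $I_1$, $s_1$ in Higman's theorem to produce the well-ordered set $Y$, and the same transfinite recursion with invariant~\eqref{d.approx} and coefficient formula $\beta_g=\gamma\,\delta^{-1}$. The only difference is cosmetic presentation; nothing of substance distinguishes the two arguments.
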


\section{What should we try to prove next?}\label{S.further?}

In the context of Theorem~\ref{T.Dubrovin}, consider any column vector
$x=\left(\begin{matrix} x_1 \\
x_2 \end{matrix}\right)\in(kG)^2,$ and assume
for simplicity that both $x_1$ and $x_2$ are nonzero.
From that theorem it is not hard
to deduce that the set $K$ of row vectors $a=(a_1,a_2)\in k((G))^2$
right-annihilated by $x$ has the property that the projection maps
to first and second components each give a bijection $K\to k((G)).$
We may ask
\begin{question}\label{Q.next_case}
Given $x_1,\,x_2\in kG-\{0\},$ let $K$ be, as above, the kernel of
the map $k((G))^2\to k((G))$ induced by the column vector
$\left(\begin{matrix} x_1 \\
x_2 \end{matrix}\right)\in(kG)^2.$
Is it true that
for each $y=\left(\begin{matrix} y_1 \\
y_2 \end{matrix}\right)\in(kG)^2,$ the map $K\to k((G))$ induced
by $y,$ taking $(a_1,a_2)\in K$ to $a_1 y_1 + a_2 y_2,$
is either zero or bijective?
\end{question}

A positive answer seems intuitively plausible.

We shall see in the next section a sequence of conditions,
indexed by an integer $n\geq 1,$ on a right
module $M$ over a general ring $R,$ such that for $R=kG$ and $M=k((G)),$
the result of Theorem~\ref{T.Dubrovin} is the $n=1$ case,
a positive answer to Question~\ref{Q.next_case} would
be the $n=2$ case, and the full set of conditions
would imply that $R$ is embeddable in a division ring.

For the moment, let us restrict attention to Question~\ref{Q.next_case}.
To see concretely what it asks, note that the kernel $K$
can be described as the set of elements of $k((G))^2$
of the form $(a_1, -a_1 x_1 x_2^{-1})$ (where by $x_2^{-1}$
I mean the inverse of the action of $x_2$ on $k((G))\,).$
The image of such a member of $K$ under $y$
is $a_1 y_1 - a_1 x_1 x_2^{-1} y_2;$
hence a positive answer to Question~\ref{Q.next_case} would say
that in the ring of endomaps of $k((G))$ generated
by the actions of elements of $kG$ and the inverses of those actions,
every map of the form $y_1 - x_1 x_2^{-1} y_2$ is
either zero or invertible.
So this is, indeed, a ``next step''
after the invertibility of the actions of
nonzero elements of $kG$ itself.

If $y_2\neq 0,$ then right multiplying
$y_1 - x_1 x_2^{-1} y_2$ by $y_2^{-1},$ we see that a positive answer
to Question~\ref{Q.next_case} is also
equivalent to the statement that every nonzero map of the
form $y_1 y_2^{-1} - x_1 x_2^{-1}$ is invertible.
Alternatively, left multiplying by $x_1^{-1}$ gives
the corresponding condition on $x_1^{-1} y_1 - x_2^{-1} y_2,$
while if we instead right multiply by $y_1^{-1},$ we get the
same statement for $1 - x_1 x_2^{-1} y_2\,y_1^{-1}.$
So we can restate Question~\ref{Q.next_case} as
\begin{question}\label{Q.next_case_alt}
Is it true that for all $x_1,\,x_2,\,y_1,\,y_2\in kG-\{0\},$
the endomap of $k((G))$ given by the \textup{(}right\textup{)}
action of $y_1 y_2^{-1} - x_1 x_2^{-1}$ is either zero or invertible?
Equivalently, is the same true of the endomaps given
by the actions of $x_1^{-1} y_1 - x_2^{-1} y_2,$
$y_1 - x_1 x_2^{-1} y_2,$
$1 - x_1 x_2^{-1} y_2\,y_1^{-1}$?
\end{question}

(The assumption that $y_1$ and $y_2$ are nonzero was not made in
Question~\ref{Q.next_case}; but if either or both is zero,
an affirmative answer to Question~\ref{Q.next_case} is easily
deduced from Theorem~\ref{T.Dubrovin} and our description of~$K.)$

The difficulty in approaching
Question~\ref{Q.next_case_alt} is that we have no evident test
for when a map such as $x_1^{-1} y_1 - x_2^{-1} y_2$ should be zero.
Since each of $x_1,$ $x_2,$ $y_1,$ $y_2$ is a finite $\!k\!$-linear
combination of elements of $G,$ we might hope that this could
be answered by some finite computation; but the inverses
appearing in the expressions in
Question~\ref{Q.next_case_alt}
represent operators on $k((G))$ that can behave differently
on different terms of an element of that module.

A variant of this difficulty:  Note
that for every $r\in kG,$ the expression
$x_1^{-1} y_1 - x_2^{-1} y_2$ has the same action as
$x_1^{-1} (y_1 + x_1 r) - x_2^{-1} (y_2 + x_2 r).$
This can allow one to transform an expression
$a (x_1^{-1} y_1 - x_2^{-1} y_2)$ $(a\in k((G))\,)$
such that the lowest elements of the supports of
$a(x_1^{-1} y_1)$ and $a(x_2^{-1} y_2)$ cancel one
another to an expression for the same element with higher lowest terms.
But an $r$ that has that effect
for one $a\in k((G))$ might have the opposite
effect for an $a'$ with a different lowest term.

We remark, as an aside, that the statement that every map
of the form $z_1^{-1} + z_2^{-1}$ $(z_1,z_2\in kG-\{0\})$ is zero
or invertible does not require an answer to
Question~\ref{Q.next_case_alt}; it
follows from Theorem~\ref{T.Dubrovin}, by looking at
$z_1^{-1} + z_2^{-1}$ as $z_1^{-1} (z_2 + z_1) z_2^{-1}.$
(More generally, this holds for every map of the form
$z_1^{-1} w_1 + w_2 z_2^{-1},$ since this can be rewritten
$z_1^{-1} (w_1 z_2 + z_1 w_2)z_2^{-1}.)$
The statement that every map of the
form $z_1^{-1} + z_2^{-1} + z_3^{-1}$ is zero
or invertible would, on the other hand, follow
from a positive answer to
Question~\ref{Q.next_case_alt}, by writing that
sum as $z_1^{-1} ((z_2 + z_1) + z_1 z_3^{-1} z_2) z_2^{-1},$
and regarding the parenthesized factor as having the form
$y_1 - x_1 x_2^{-1} y_2.$

\section{A general condition}\label{S.either/or}

Let us now give the promised family of conditions generalizing
both the result proved by Dubrovin and the extension of
that result asked for in Question~\ref{Q.next_case_alt}.

(Readers who have read \S\ref{S.n<N} will notice similarities
between the properties treated there and those considered below;
for instance, between Lemma~\ref{L.strong_n-1}(vii) and
Lemma~\ref{L.X'}.
The material of \S\ref{S.n<N}
was, in fact, motivated by the idea of abstracting
the results below to closure
operators not necessarily arising from modules.
But that turned out to require lengthier arguments,
and I have not tried to carry it to completion.)

Given a ring $R,$ a right $\!R\!$-module $M,$ and any
$n\geq 0,$ let us refer to an $n\times n$ matrix
over $R$ as \mbox{{\em $\!M\!$-invertible}}
if it induces an invertible map $M^n\to M^n.$
The condition that we will be interested in for general $n\geq 1$ is
\begin{equation}\begin{minipage}[c]{35pc}\label{d.either/or}
For every $n\times n{-}1$ matrix $X$ over $R$
whose top $n{-}1\times n{-}1$ block is $\!M\!$-invertible, and
every column vector $y\in R^n,$ the action of $y$ either
annihilates the kernel $K$ of the additive
group homomorphism $M^n\to M^{n-1}$ induced by $X,$ or maps $K$
bijectively onto $M.$
\end{minipage}\end{equation}

What does the $n=1$ case of~\eqref{d.either/or} say?
In that case the matrix $X$ of~\eqref{d.either/or} is
$1\times 0,$ hence represents the unique map $M\to M^0=\{0\},$
which has kernel $M.$
Its upper $0\times 0$ block, also an empty matrix, represents
the unique endomorphism of $M^0=\{0\},$ which is clearly
invertible; i.e., that block is $\!M\!$-invertible.
Thus, that case of~\eqref{d.either/or} says
that every $y\in R$ not annihilating $M$ maps $M$ bijectively
to itself.
In particular, for $R=kG$ and $M=k((G)),$ this is the
statement of Theorem~\ref{T.Dubrovin}; so for these $R$ and $M,$
the $n=1$ case of~\eqref{d.either/or} holds.

Given Theorem~\ref{T.Dubrovin}, we now see that
Question~\ref{Q.next_case} asks whether the $n=2$ case
of~\eqref{d.either/or} holds for this ring and module.
(The one detail that has to be cleared up is that in formulating
Question~\ref{Q.next_case}, I assumed for conceptual simplicity that
$x_2\neq 0,$ which is not in the hypothesis of~\eqref{d.either/or}.
But the hypothesis of~\eqref{d.either/or} for $n=2$ does
imply $x_1\neq 0,$ hence if $x_2=0,$ we get $K=\{0\}\times M,$ and
the desired result holds by Theorem~\ref{T.Dubrovin}.
So Question~\ref{Q.next_case} asks for the part
of the $n=2$ case of~\eqref{d.either/or} which does not
follow from this observation.)

In studying the general case of~\eqref{d.either/or},
the following observation will be useful.

\begin{lemma}\label{L.X'}
Let $R$ be a ring, $M$ a right $\!R\!$-module, $X$
an $n\times n{-}1$ matrix over $R$ for some $n>0,$
and $y\in R^n$ a column vector.
Then if the upper $n{-}1\times n{-}1$ block of $X$
is $\!M\!$-invertible, and $y$ maps $\r{ann}_{M^n}(X)$ bijectively
to $M,$ then the $n\times n$ matrix $X'$ gotten by
appending $y$ to $X$ as an $\!n\!$-th column is $\!M\!$-invertible.
\end{lemma}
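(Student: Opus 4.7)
The plan is to decompose the map $M^n \to M^n$ induced by right-action of $X'$ into the action of $X$ (landing in $M^{n-1}$) together with the extra coordinate $a \mapsto ay$, and to check surjectivity and injectivity separately, each using one of the two hypotheses.

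Write $X = \left(\begin{matrix} X_0 \\ x_* \end{matrix}\right)$, with $X_0$ the $\!M\!$-invertible upper $n{-}1\times n{-}1$ block and $x_*$ the bottom row, and decompose $a \in M^n$ as $(a_0, a_n)$ with $a_0 \in M^{n-1}$ and $a_n \in M$. Then $aX = a_0 X_0 + a_n x_*$, so the $\!M\!$-invertibility of $X_0$ already makes $\cdot X: M^n \to M^{n-1}$ surjective: an arbitrary $b \in M^{n-1}$ is the image of $(b X_0^{-1}, 0)$. Write $K = \r{ann}_{M^n}(X)$ for the kernel of $\cdot X$; by hypothesis, the restriction of $\cdot y$ to $K$ is a bijection $K \to M$.

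For injectivity of $\cdot X'$: if $aX' = 0$, then reading off the first $n-1$ coordinates gives $aX = 0$, so $a \in K$, while the last coordinate gives $ay = 0$; since $y$ acts injectively on $K$, we conclude $a = 0$. For surjectivity: given $(b, c) \in M^{n-1} \times M$, first use surjectivity of $\cdot X$ to pick $a^{(0)} \in M^n$ with $a^{(0)}X = b$, then use surjectivity of $\cdot y|_K$ to pick $k \in K$ with $ky = c - a^{(0)}y$; the sum $a = a^{(0)} + k$ satisfies $aX = b$ (since $kX = 0$) and $ay = c$, so $aX' = (b, c)$.

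I do not see a substantive obstacle in this lemma: both halves reduce in one step to the hypotheses once $\cdot X'$ has been split as above. The only things to keep track of are the paper's conventions (elements of $M^n$ are rows, elements of $R^n$ are columns), and the observation that the $K$-to-$M$ bijectivity of $y$ is exactly the extra ingredient needed to promote the already-surjective $\cdot X$ to a bijective $\cdot X'$.
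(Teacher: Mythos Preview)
Your proof is correct and follows essentially the same route as the paper's: both arguments split the action of $X'$ into the first $n{-}1$ columns (the action of $X$) and the last column $y$, use the $\!M\!$-invertibility of $X_0$ to find a preimage in $M^n$ (with last coordinate~$0$) hitting the first $n{-}1$ target entries, and then correct the last entry by adding an element of $K=\r{ann}_{M^n}(X)$ chosen via the bijectivity of $y|_K$; injectivity in both is the observation that $aX'=0$ forces $a\in K$ and $ay=0$, hence $a=0$.
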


\begin{proof}
By assumption, $y$ annihilates no member of $\r{ann}_{M^n}(X);$
clearly this says that the matrix $X'$
annihilates no member of $M^n.$

To see that $X'$ is surjective, let $a\in M^n$
be an element we want to show is in its range.
By the $\!M\!$-invertibility of the top $n{-}1\times n{-}1$ block
of $X,$ we can find $b\in M^n$ with last term $0,$ and whose
first $n-1$ terms form a vector carried by that subblock of $X$
to the first $n-1$ terms of $a.$
Since the last term of $b$ is $0,$ multiplying $b$ by the whole
matrix $X$ still gives the first $n-1$ terms of $a.$
If we apply $y$ to $b,$ we get an element $by\in M$
which may differ from the desired last term $a_n$ of $a;$
but since $y$ carries the annihilator of $X$ bijectively
to $M,$ we can find an element $b'$ in that
annihilator which is carried by $y$ to $a_n - by.$
We then get $(b+b')X'=a,$ proving surjectivity.
\end{proof}

From this, we can prove, for any $N\geq 0,$

\begin{lemma}\label{L.max_inv}
Suppose $R$ and $M$ are a ring and module
satisfying~\eqref{d.either/or} for all $0<n\leq N.$
Let $H$ be an $n\times n'$ matrix over $R$ with $n\leq N,$
and suppose \textup{(}as we may, without loss of generality,
by a permutation of the rows and columns\textup{)} that
$H=\left(\begin{matrix} A\ B \\
C\ D \end{matrix}\right),$ where $A$ is maximal among
$\!M\!$-invertible square submatrices of $A.$
Say $A$ is $m\times m.$
\textup{(}Here one or more of $m,$ $n-m,$ $n'-m$ may be zero, making
some of the submatrices $A,$ $B,$ $C,$ $D$ empty.\textup{)}

Then every column of
$\left(\begin{matrix} B \\
D \end{matrix}\right)$ annihilates
$\r{ann}_{M^n}(\left(\begin{matrix} A \\
C \end{matrix}\right));$
equivalently, $\r{ann}_{M^n}(H)=
\r{ann}_{M^n}(\left(\begin{matrix} A \\
C \end{matrix}\right)).$
\end{lemma}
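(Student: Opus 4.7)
The displayed equality is equivalent to the first (column-annihilation) formulation: the inclusion $\r{ann}_{M^n}(H)\subseteq\r{ann}_{M^n}(\left(\begin{matrix} A \\ C \end{matrix}\right))$ is automatic, and the reverse inclusion is exactly the statement that every column of $\left(\begin{matrix} B \\ D \end{matrix}\right)$ annihilates the larger annihilator. So I will prove the first form. The trivial cases $m=n$ (in which $A$ is $\!M\!$-invertible, so $\r{ann}_{M^n}(\left(\begin{matrix} A \\ C \end{matrix}\right))=\{0\}$) and $n'=m$ (in which $\left(\begin{matrix} B \\ D \end{matrix}\right)$ is empty) are immediate, so I assume $m<n\leq N$ and $n'>m$, which in particular gives $m+1\leq N$ so that \eqref{d.either/or} is available at level $m+1$.

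The central step is the case $n=m+1$. Fix any column $y=\left(\begin{matrix} B_j \\ D_j \end{matrix}\right)$ of $\left(\begin{matrix} B \\ D \end{matrix}\right)$ and apply \eqref{d.either/or} to the $(m+1)\times m$ matrix $\left(\begin{matrix} A \\ C \end{matrix}\right)$, whose top $m\times m$ block $A$ is $\!M\!$-invertible, and to $y\in R^{m+1}$. The conclusion is that either $y$ annihilates $K=\r{ann}_{M^{m+1}}(\left(\begin{matrix} A \\ C \end{matrix}\right))$ — which is what I want — or $y$ maps $K$ bijectively onto $M$. In the latter alternative, Lemma~\ref{L.X'} produces an $\!M\!$-invertible $(m+1)\times(m+1)$ matrix $\left(\begin{matrix} A & B_j \\ C & D_j \end{matrix}\right)$, a square submatrix of $H$ strictly larger than $A$, contradicting the maximality of $A.$ So the first alternative holds.

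For general $n$ with $m<n\leq N$, I reduce to the case just proved by exploiting the $\!M\!$-invertibility of $A$ on $M^m$. Given $a=(a_1,\dots,a_n)\in\r{ann}_{M^n}(\left(\begin{matrix} A \\ C \end{matrix}\right))$, the defining relation $(a_1,\dots,a_m)\,A=-(a_{m+1},\dots,a_n)\,C$, combined with the bijectivity of $v\mapsto vA$ on $M^m$ and its additivity, yields a finite decomposition $a=\sum_{i=m+1}^n a^{(i)}$, where $a^{(i)}$ has $a_i$ in position $i$, zeros in all other positions among $m+1,\dots,n$, and first $m$ coordinates uniquely forced by the annihilator condition. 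Each $a^{(i)}$ is supported on rows $\{1,\dots,m,i\}$, and dropping the zero rows identifies it with an element of the annihilator of the $(m+1)\times m$ row-restriction of $\left(\begin{matrix} A \\ C \end{matrix}\right)$ to rows $1,\dots,m,i$. In the corresponding $(m+1)\times n'$ row-restriction of $H$, the block $A$ is still a maximal $\!M\!$-invertible square submatrix (any larger one would also be a larger one in $H$). The $n=m+1$ case, applied to this row-restriction, gives that the restricted column of $y$ annihilates the restricted annihilator; a direct unpacking shows this equals $a^{(i)}\cdot y=0.$ Summing over $i$ gives $a\cdot y=0.$

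The hard part will not be any single step but the bookkeeping of the reduction: I need to check that the decomposition $a=\sum a^{(i)}$ does lie coordinatewise inside $\r{ann}_{M^n}(\left(\begin{matrix} A \\ C \end{matrix}\right))$, that maximality of $A$ passes to row-restrictions of $H$, and that $a^{(i)}\cdot y$ genuinely coincides with the restricted-row pairing. None of these is deep, but all three are required to justify invoking \eqref{d.either/or} only at level $m+1$ while drawing a conclusion about $M^n.$
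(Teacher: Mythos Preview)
Your proposal is correct and follows essentially the same route as the paper's proof: both decompose $\r{ann}_{M^n}\!\left(\begin{smallmatrix}A\\ C\end{smallmatrix}\right)$ as a direct sum indexed by the positions $m+1,\dots,n$, restrict attention to a single extra row, and then invoke~\eqref{d.either/or} at level $m+1$ together with Lemma~\ref{L.X'} to produce an $(m{+}1)\times(m{+}1)$ $M$-invertible submatrix of $H$, contradicting the maximality of $A$. The only difference is organizational---you isolate the $n=m+1$ case first and then reduce the general case to it via row-restrictions, whereas the paper handles everything in a single contradiction argument, rearranging rows so the offending summand sits in position $m+1$ and then passing to the first $m+1$ rows directly.
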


\begin{proof}
Let us write $A^{-1}$ for the inverse of the action
of $A$ on $M^m$ (though this
action is not in general represented by a matrix over $R).$
Then every element of $\r{ann}_{M^n}(\left(\begin{matrix} A \\
C \end{matrix}\right))$ is uniquely determined by
its final $n-m$ entries; namely,
given $b\in M^{n-m},$ one sees that $(-b\,C A^{-1}, b)$
is the unique member of $M^n$ ending in $b$ and
annihilated by $\left(\begin{matrix} A \\
C \end{matrix}\right).$
Hence $\r{ann}_{M^n}(\left(\begin{matrix} A \\
C \end{matrix}\right))$ is the direct sum, over all
$i$ with $m<i\leq n,$ of the additive subgroup of that
annihilator consisting of
elements whose only nonzero entry after the first $m$ entries
(if any) is in the $\!i\!$-th position.

Now suppose that for some $j>m,$ the $\!j\!$-th column of $H$
did not annihilate $\r{ann}_{M^n}(\left(\begin{matrix} A \\
C \end{matrix}\right)).$
By rearranging the columns of $H$ after the $\!m\!$-th,
we can assume without loss of generality that $j=m+1.$
Since the $\!m{+}1\!$-st column of $H$ does not annihilate
$\r{ann}_{M^n}(\left(\begin{matrix} A \\
C \end{matrix}\right)),$ that column will not annihilate all of
the direct summands mentioned in the preceding paragraph, and by a
rearrangement of the rows of $H,$ we can assume
that a summand which it fails to annihilate consists of the members of
$\r{ann}_{M^n}(\left(\begin{matrix} A \\
C \end{matrix}\right))$ whose only nonzero entry after
the $\!m\!$-th (if any) is the $\!m{+}1\!$-st.

Let us now apply~\eqref{d.either/or}, putting in the role of $X$
the $m{+}1\times m$ matrix consisting of $A$ and the top row of $C,$
and in the role of $y$ the column vector consisting of the
first $m+1$ entries of the $\!m{+}1\!$-st column of $H.$
By assumption, that column does not annihilate the
annihilator of that matrix.
By Lemma~\ref{L.X'}, that makes the upper left $m{+}1\times m{+}1$
submatrix of $H$ an $\!M\!$-invertible matrix,
contradicting the maximality assumption on $A.$
This contradiction shows that every column of
$\left(\begin{matrix} B \\
D \end{matrix}\right)$ annihilates
$\r{ann}_{M^n}(\left(\begin{matrix} A \\
C \end{matrix}\right)),$ as claimed.
\end{proof}

We can now prove

\begin{proposition}\label{P.e/o=>exch}
Let $R$ and $M$ be a ring and module satisfying~\eqref{d.either/or}
for $0<n\leq N.$
Then condition~\eqref{d.cl_exch_iff_mx-}
holds for all $n\leq N.$

Hence if~\eqref{d.either/or} holds for all $n>0,$ and
the $\!R\!$-module $M$ is faithful, then $R$ admits an
embedding in a division ring.
\end{proposition}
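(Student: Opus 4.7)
The plan is to prove the first claim by contradiction. Suppose that for some $n$ with $0 < n \leq N$ we have an $n \times (n-1)$ matrix $A$ and columns $b, c \in R^n$ such that $\r{ann}_{M^n}(A) \supsetneqq \r{ann}_{M^n}(A|b) \supsetneqq \r{ann}_{M^n}(A|c)$, and derive a contradiction. The driving observation is that if we can find an $n \times (n-1)$ matrix $X$ whose top $(n-1) \times (n-1)$ block is $\!M\!$-invertible and satisfies $\r{ann}_{M^n}(X) = \r{ann}_{M^n}(A)$, then~\eqref{d.either/or} applied once with $y = b$ and once with $y = c$ forces each of $\r{ann}_{M^n}(A|b)$ and $\r{ann}_{M^n}(A|c)$ to be either equal to $\r{ann}_{M^n}(A)$ (contradicting the respective strict inclusion in the chain) or equal to $\{0\}$; and the simultaneous conclusion $\r{ann}_{M^n}(A|b) = \r{ann}_{M^n}(A|c) = \{0\}$ contradicts the middle strict inclusion.

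By Lemma~\ref{L.max_inv} applied to $A$, a maximal $\!M\!$-invertible square submatrix $A_0$ exists, of some size $m \times m$ with $m \leq n - 1$, and $\r{ann}_{M^n}(A) = \r{ann}_{M^n}(A_0^*)$, where $A_0^*$ is the $n \times m$ submatrix of $A$ whose columns contain those of $A_0$. When $m = n - 1$, after permuting rows the matrix $A$ itself has $\!M\!$-invertible top $(n-1) \times (n-1)$ block, so one takes $X = A$ and the contradiction follows immediately from the scheme above.

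The main obstacle is the case $m < n - 1$: a direct attempt to enlarge $A_0^*$ by new columns of $R^n$ that annihilate $\r{ann}_{M^n}(A)$ and enlarge the $\!M\!$-invertible top block is blocked, because any such new column lies in the $\!M\!$-closure of $A_0^*$'s columns and, by a computation involving $A_0^{-1}$ (the inverse of the action of $A_0$ on $M^m$), cannot extend $A_0$ to any larger $\!M\!$-invertible square. I would therefore proceed by induction on $n$. Applying Lemma~\ref{L.max_inv} to $H = (A|b|c)$, one checks that the maximal $\!M\!$-invertible square submatrix of $H$ has size either $m+1$ or $m+2$; in the former case it can be chosen to lie in $B = (A|b)$, using $b$ but not $c$, whereupon Lemma~\ref{L.max_inv} collapses $\r{ann}_{M^n}(H)$ to $\r{ann}_{M^n}(B)$, contradicting $\r{ann}_{M^n}(H) \subseteq \r{ann}_{M^n}(A|c) \subsetneqq \r{ann}_{M^n}(A|b)$. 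So the maximal size must be $m+2$, and every such square must use both $b$ and $c$ together with $m$ columns of $A$; restricting $A$, $b$, and $c$ to the $m+2$ rows and $m$ columns of $A$ involved yields a triple in dimension $n' = m+2 < n$. The central technical hurdle, which I expect to be delicate, is to show that the original strict chain descends (for a suitable choice of reduced matrices, possibly after further applications of Lemma~\ref{L.max_inv} to control which columns of $A$ are ``chosen'' consistently for $A$, $B$, $C$, and $H$) to a strict chain at dimension $n'$, so that the induction hypothesis applies and yields a contradiction.

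Finally, the second claim follows immediately once the first is established: by Lemma~\ref{L.cl_fr_M}, the system $(\cl_{R^n})_{n \geq 0}$ of closure operators defined from $M$ by~\eqref{d.cl_fr_M} then satisfies all of~\eqref{d.cl_cl}--\eqref{d.cl_exch}, and so (by the theory recalled at the end of \S\ref{S.mtrd}) determines a homomorphism $f : R \to D$ into a division ring whose kernel is $\cl_R(\emptyset) = \r{ann}_R(M)$; faithfulness of $M$ makes this kernel zero, so $f$ is the desired embedding.
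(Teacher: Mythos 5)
Your opening reduction is fine, and the case where the maximal $\!M\!$-invertible square submatrix $A_0$ of $A$ has size $m=n-1$ is handled correctly; the final paragraph (deducing the embedding from Lemma~\ref{L.cl_fr_M}) is also fine. But the case $m<n-1$ is where the real content lies, and there your argument has a genuine gap which you yourself flag: the proposed induction on $n$ is never carried out. The ``descent'' step is not just delicate but unclear as stated --- the strict chain $\r{ann}_{M^n}(A)\supsetneqq\r{ann}_{M^n}(A|b)\supsetneqq\r{ann}_{M^n}(A|c)$ lives in $M^n,$ and restricting to $m+2$ rows and $m$ columns produces annihilators in $M^{m+2}$ that have no evident containment relation with the original ones; moreover the $m$ columns of $A$ entering a maximal invertible square need not be chosen consistently for $(A|b)$ and $(A|c),$ so even the set-up of the smaller triple is not pinned down. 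Your ``driving observation'' also cannot be rescued by finding an $n\times(n-1)$ matrix $X$ with invertible top block and $\r{ann}_{M^n}(X)=\r{ann}_{M^n}(A)$: when $m<n-1,$ $\r{ann}_{M^n}(A)$ is (by the computation in Lemma~\ref{L.max_inv}) the graph of a function of the last $n-m\geq 2$ coordinates, hence contains distinct elements with the same last coordinate, whereas $\r{ann}_{M^n}(X)$ would be determined by the last coordinate alone; so no such $X$ exists in general.

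The paper's proof avoids the induction entirely, and the missing idea is this: you do not need to force $\r{ann}_{M^n}(A|b)$ and $\r{ann}_{M^n}(A|c)$ all the way down to $\{0\}$ or up to $\r{ann}_{M^n}(A)$; it is enough to show that both are graphs over the same set of coordinates. Concretely, after applying Lemma~\ref{L.max_inv} to $H=A$ one gets an $n\times m$ submatrix $H'$ with invertible top $m\times m$ block and $\r{ann}_{M^n}(H')=\r{ann}_{M^n}(H),$ whose annihilator is the graph of a function of the last $n-m$ coordinates. Since $b$ (and hence $c,$ by the assumed inclusion of annihilators) acts nontrivially on an element of that graph supported in a single free coordinate, one application of~\eqref{d.either/or} at size $m+1,$ fed through Lemma~\ref{L.X'}, shows that each of $(H',b)$ and $(H',c)$ has invertible top $(m{+}1)\times(m{+}1)$ block, so each of $\r{ann}_{M^n}(H',b)$ and $\r{ann}_{M^n}(H',c)$ is the graph of a function of the last $n-m-1$ coordinates. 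Two such graphs are either equal or incomparable, which contradicts $\r{ann}_{M^n}(H',b)\supsetneqq\r{ann}_{M^n}(H',c),$ i.e., the middle-versus-last strict inclusion of your chain. To make your write-up correct you would need either to supply this kind of direct argument or to actually prove the descent step of your induction; as it stands the hard case is unproved.
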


\begin{proof}
Assume, by way of contradiction, that strict inclusions
as in~\eqref{d.cl_exch_iff_mx-} hold; however, let us write $H$
for the matrix there called $A,$ and write the matrices
there called $B$ and $C$ as $(H,s)$
and $(H,t),$ where $s,t\in R^n$ (freeing up the
letters $A$ through $D$ for use as in Lemma~\ref{L.max_inv}).

Applying Lemma~\ref{L.max_inv} to $H,$ we get (after rearranging the
rows and columns of $H)$
an $n\times m$ submatrix $H'=\left(\begin{matrix} A \\
C \end{matrix}\right),$ where $m\leq n-1,$
having the same left annihilator in $M^n$
as $H,$ and such that $A$ is invertible.
By hypothesis, the vectors $s$ and $t$ each act nontrivially
on $\r{ann}_{M^n}(H)=\r{ann}_{M^n}(H'),$
with $s$ having strictly larger annihilator
there than $t$ does, so
\begin{equation}\begin{minipage}[c]{35pc}\label{d.H',sH',t}
$\r{ann}_{M^n}(H',s)\ \supsetneqq\ \r{ann}_{M^n}(H',t).$
\end{minipage}\end{equation}

Now as in the proof of Lemma~\ref{L.max_inv}, we see that in
$\r{ann}_{M^n}(H'),$ each element is determined uniquely
by its final $n-m$ terms, and that since $s$ acts nontrivially
on that annihilator, it will act nontrivially on an element
in which only one of those positions has a nonzero entry.
By another rearrangement of rows we can assume that
that position is the $\!m{+}1\!$-st.
The annihilator of the action $t$ on $\r{ann}(H')$
was assumed to be contained in that of $s,$ so $t$ will
also act nontrivially on that element.

Hence by Lemma~\ref{L.X'}, in each of the $n\times m{+}1$ matrices
$(H',s)$ and $(H',t),$ the top $m{+}1\times m{+}1$ block
will be invertible.
Hence in the annihilators of those matrices, every element
will be determined uniquely by its last $n-m-1$ terms.
Clearly, if the functions determining such elements
from their last $n-m-1$ terms are the same
for $(H',s)$ and $(H',t),$ then the annihilators of those matrices
are the same, while if the functions are different,
those annihilators are incomparable; so neither possibility
is compatible with the assumed strict inclusion~\eqref{d.H',sH',t}.

This contradiction completes the proof of~\eqref{d.cl_exch_iff_mx-}.
The final assertion of the
proposition follows by Lemma~\ref{L.cl_fr_M}.
\end{proof}

We remark that an $\!R\!$-module $M$
satisfying~\eqref{d.cl_exch_iff_mx-} for all $n,$
and hence leading to a homomorphism from $R$ to a division
ring $D,$ need not, in general, itself be a vector space
over a division ring.
For example, if $R$ is a commutative integral domain, one
finds that the choice $M=R$ leads as in~\S\ref{S.M}
to a closure operator that gives the field of fractions $F$ of $R.$
Indeed, the closure operator determined by $M$ is the same
as that determined by the $\!R\!$-module $F,$ since
the annihilator of any row vector over $F$
is also the annihilator of a row vector over $R,$
gotten by clearing denominators.
On the other hand,
I do not know whether a module satisfying the stronger
condition~\eqref{d.either/or} for all $n$ must be a vector space over
the division ring $D$ that it determines.
(This is indeed so in the case of commutative $R,$ where the
$n=1$ case of~\eqref{d.either/or} is the analog of Dubrovin's result:
It says that every element of $R$ that does not annihilate $M$
acts invertibly on it.)

The $n=2$ case of~\eqref{d.either/or},
discussed in the preceding section, should be a useful
test case for ideas on how to try to prove
that for $R=kG$ and $M=k((G)),$~\eqref{d.either/or} holds for all $n.$

\section{Sandwiching $kG$ between a right and a left module}\label{S.G*}

For $R$ an algebra over a field $k$ and $M$ a right $\!R\!$-module,
the dual $\!k\!$-vector space $M^*=\r{Hom}_k(M,k)$ has
a natural structure of {\em left} $\!R\!$-module.
If we write the image of $a\in M$ under
$b\in M^*$ as $\lang a,b\rang\in k,$ then the relation between these
module structures is described by the rule
\begin{equation}\begin{minipage}[c]{35pc}\label{d.MrM*}
$\lang a\,r,\,b\rang\ =\ \lang a,\,r\,b\rang$\quad
for\quad $a\in M,\ r\in R,\ b\in M^*.$
\end{minipage}\end{equation}
I do not know whether $M^*$ can somehow be used, together with
$M,$ in studying whether $R$ is embeddable in a division ring.
However, the above observation is really a lead-in to the observation
that for $R=kG$ and $M=k((G)),$ there is
a left $\!R\!$-module that behaves much like the above $M^*,$ but is
not itself constructed from $M,$ and hence has (conceivably) a better
chance of bringing additional strength to our investigations.

Namely, given a group $G$
with a right-invariant ordering $\leq,$ let $G^*$ be the same
group under the corresponding {\em left}-invariant
ordering, $\leq^*,$ characterized by
\begin{equation}\begin{minipage}[c]{35pc}\label{d.G*}
$g\,\leq^*\,h\quad\iff\quad g^{-1}\geq h^{-1}.$
\end{minipage}\end{equation}

(A right- or left-invariant ordering $\leq$
on a group is determined by its
{\em positive cone}, $\{g\in G\mid g\geq 1\}.$
The ordering $\leq^*$ defined above is the left-invariant ordering
having the same positive cone as the given
right-invariant ordering ${\leq}.$
Indeed, writing $P$ for the positive cone of ${\leq},$ we have
$g\leq h$ if and only if $h\in Pg,$ so by~\eqref{d.G*},
$g\leq^* h$ if and only if $g^{-1}\in Ph^{-1},$
which, left-multiplying by $g$ and right-multiplying by $h,$
comes to $h\in gP.)$

Let us write $k((G^*))$ for the space of
formal $\!k\!$-linear combinations of elements of $G$
having well-ordered supports under $\leq^*;$
this clearly has a natural structure of {\em left} $\!kG\!$-module.
I claim that we can define a $\!k\!$-bilinear
map $\lang\,,\,\rang:k((G))\times k((G^*))\to k$ by
\begin{equation}\begin{minipage}[c]{35pc}\label{d.kGkG*}
$\lang \sum \alpha_g g,\ \sum \beta_h h\rang\ =
\ \sum_{g\in G}\,\alpha_g \beta_{g^{-1}}$\quad for\quad
$\sum \alpha_g g\in k((G))$ and $\sum \beta_h h\in k((G^*)).$
\end{minipage}\end{equation}
To see that the right-hand side of the equation
of~\eqref{d.kGkG*} makes sense, let $A$ be the set of
$g\in G$ such that both $\alpha_g$ and $\beta_{g^{-1}}$ are nonzero.
The condition $\sum \alpha_g g\in k((G))$
shows that $A$ is well-ordered under ${\leq}.$
Similarly, since $A$ is contained in the set of {\em inverses} of
elements of the support of $\sum \beta_h h,$ and the latter support
is well-ordered under $\leq^*,$~\eqref{d.G*} shows that
$A$ is reverse-well-ordered under ${\leq}.$
Being both well-ordered and reverse-well-ordered
under $\leq,$ $A$ is finite; so the sum on the
right-hand side of~\eqref{d.kGkG*} is indeed defined.

The formula~\eqref{d.kGkG*} looks as though it says, ``Multiply
the formal sums $\sum\alpha_g g$ and $\sum\beta_h h$ together,
and take the coefficient of $1$ in the result''.
But though the summation that would give
that coefficient is, as we have just
seen, defined, the same need not be true of the coefficients
of other members of $G.$
For instance, if $G$ contains elements $s,t,$ both $>1,$ such
that $t s = s t^{-1},$ then $\sum_{i\geq 0} t^i\in k((G))$ belongs
to both $k((G))$ and $k((G^*));$ hence by left-invariance
of the order on $G^*,$
$\sum_{j\geq 0} s t^j$ also belongs to $k((G^*)).$
But the formal product of these two elements is
$(\sum_{i\geq 0} t^i)\,(\sum_{j\geq 0} s t^j)=
\sum_{i,j\geq 0} s t^{j-i},$
in which the term $s$ occurs infinitely many times.
(More generally, in this summation, each term $s t^j$
occurs infinitely many times, while terms $t^j,$
in particular, the term $1,$ never occur; which is
consistent with our observation that $1$ can occur only
finitely many times.)

Returning to the map~\eqref{d.kGkG*}, one finds that it
satisfies the analog of~\eqref{d.MrM*}:
\begin{equation}\begin{minipage}[c]{35pc}\label{d.kGrkG*}
$\lang a\,r,\,b\rang =\lang a,\,r\,b\rang$\quad for\quad
$a\in k((G)),$ $r\in kG,$ $b\in k((G^*)).$
\end{minipage}\end{equation}
This is intuitively clear from the ``coefficient of~$1$''
interpretation of $\lang\,,\,\rang.$
To verify it formally, one can first check it for $r\in G,$ then take a
finite $\!k\!$-linear combination of the resulting formulas.

Let us write the common value of the two
sides of~\eqref{d.kGrkG*} as $\lang a\,r\,b\rang.$
Thus, given $a\in k((G))$ and $b\in k((G^*)),$
though one cannot associate to each $g\in G$ the ``coefficient
of $g$ in their product'', one can associate to each such
$g$ the value $\lang a\,g^{-1}\,b\rang.$
It is not hard to check that this is in fact the
coefficient of $g$ in the formal product $ba;$
so the summations giving all coefficients of that
product (unlike the summations that would
give the coefficients in $ab)$
do each involve only finitely many terms.
Thus, the construction sending a pair $(a,b)$ to
the formal sum $\sum (\lang a g^{-1} b\rang)\,g\in k^G,$
equivalently, to the formal product $ba,$ is a
well-defined $\!k\!$-bilinear map $k((G))\times k((G^*))\to k^G.$
However, the elements of the resulting
subspace $k((G^*))\ k((G))\subseteq k^G$
are not as ``nice'' as those of $k((G))$ and $k((G^*)).$
For instance, for $G$ having positive elements satisfying
$t s = s t^{-1}$ as above, $k((G))$ contains
$(\sum_{i\geq 0} t^i)s=s(\sum_{i\geq 0} t^{-i}),$ and $k((G^*)),$
as we have noted,
contains $s(\sum_{i>0} t^i);$ so $k((G^*))\ k((G))$ will contain
$s(\sum_{i\geq 0} t^{-i})\cdot 1 + 1\cdot s(\sum_{i>0} t^i)
=s(\sum_{-\infty}^\infty t^i).$
(If one wants to see that a product
of a single element of $k((G^*))$ with
a single element of $k((G))$ can misbehave in this way, note that in
the product
$(1+s(\sum_{i\geq 0} t^{-i}))\cdot(1+s(\sum_{i>0} t^i)),$ the terms
homogeneous of degree $1$ in $s$ give the expression just described.)
However (again writing $P$ for the positive cone of the
right-ordered group $G,$ equivalently
of the left-ordered group $G^*)$
we can at least say that each element of
$k((G^*))\ k((G))$ has support which is contained in
$u\,P\,v$ for some $u,v\in G,$
equivalently, which is disjoint from $u\,(P-\{1\})^{-1}\,v.$
Namely, given $\sum_{i=1}^n b_i a_i$
with each $b_i\in k((G^*))$ and each $a_i\in k((G)),$ take $u$
such that the supports of all the $b_i$ are in $uP,$
and $v$ such that the supports of all the $a_i$ are in $Pv.$

Suppose we now let $S$ denote the set of pairs $(s_1,s_2)$
such that $s_1$ is a $\!k\!$-vector-space endomorphism
of $k((G))$ and $s_2$ a $\!k\!$-vector-space endomorphism
of $k((G^*)),$ written on the right and the left respectively,
which satisfy
\begin{equation}\begin{minipage}[c]{35pc}\label{d.kGskG*}
$\lang a\,s_1,\,b\rang =\lang a,\,s_2\,b\rang$\quad for\quad
$a\in k((G)),$ $b\in k((G^*)).$
\end{minipage}\end{equation}
It is easy to see that in such a pair, $s_1$ and $s_2$
each determine the other.
The set $S$ forms a $\!k\!$-algebra under the obvious operations,
and contains a copy of $kG,$ consisting of all
pairs $(r,r),$ where by abuse of notation we let the symbol
for $r\in kG$ denote both the right action of $r$
on $k((G))$ and its left action on $k((G^*)).$
For nonzero $r\in kG,$ we can see from Theorem~\ref{T.Dubrovin}
and its left-right dual that
all such elements are invertible in $S;$ so $S$ contains all
ring-theoretic expressions in ``elements of $kG$'' and their inverses.

But if one has any hope that $S$ might be a division ring (as
I briefly did), that is quickly squelched.
It contains, for instance, a copy of the direct
product $\!k\!$-algebra $k^G.$
Namely, if we let each $(c_g)_{g\in G}$ in that algebra
act on $k((G))$ by $\sum \alpha_g g\mapsto \sum c_g \alpha_g g$
and on $k((G^*))$ by $\sum \beta_g g\mapsto \sum c_{g^{-1}} \beta_g g,$
these actions are easily seen to satisfy~\eqref{d.kGskG*},
and to have the ring structure of the direct product of fields $k^G.$

In conclusion, I do not know whether the interaction of the right
$\!kG\!$-module $k((G)),$ the left $\!kG\!$-module $k((G^*)),$
and the operator $\lang\,,\,\rang$ may, in some
way, be useful in tackling the question of whether
$kG$ can be embedded in a division ring.

\section{Further ideas -- also having difficulties}\label{S.variants}

\subsection{A different sort of $\!kG\!$-module?}\label{SS.prod_G_mod_?}
We noted in the preceding section that a
right ordered group $G$ can have elements $s$ and $t$
satisfying $ts = st^{-1}.$
Indeed, that relation gives a presentation of
the simplest example of a group admitting
a right invariant ordering but not
a two-sided invariant ordering:
\begin{equation}\begin{minipage}[c]{35pc}\label{d.ts=}
$G\ =\ \lang s,t\mid t s = s t^{-1}\rang.$
\end{minipage}\end{equation}

If we write elements of this group in the normal
form $t^i s^j$ $(i,j\in\Z),$ it is straightforward
to verify that a right ordering is given by lexicographic
ordering of the pairs $(j,i):$
\begin{equation}\begin{minipage}[c]{35pc}\label{d.st}
$t^i s^j \leq t^{i'} s^{j'}$\quad $\iff$ \quad
$j<j',$ or $j=j'$ and $i\leq i'.$
\end{minipage}\end{equation}
(Of course, elements of $G$ also have the normal form $s^j t^i;$
but if we used that, we would have to describe our right ordering
as lexicographic ordering by the pairs $(j,\,(-1)^j i).)$

In fact, it is easy to check that~\eqref{d.st}, its conjugate
by $s,$ and their opposites,
are the only right-invariant orderings on $G.$

Now consider the element $1-t\in kG.$
We know by Theorem~\ref{T.Dubrovin} that it acts bijectively on
the right on $k((G));$
let us write the inverse of this action on $k((G))$ as $(1-t)^{-1}.$
Where does that operation send $1\in k((G))$?
Not surprisingly, to $1+t+t^2+\dots+t^n+\cdots.$
Where does it send $s$?
We might expect that to go to $s+st+st^2+\dots+st^n+\cdots;$
but rewriting the terms of this expression in our normal form,
it becomes $s+t^{-1}s+t^{-2}s+\dots+t^{-n}s+\cdots,$ so
under~\eqref{d.st}, these terms
form a {\em descending} chain; so that expression
does not describe a member of $k((G)).$
Rather, we find that $(1-t)^{-1}$ sends $s$ to the
element $-s t^{-1} -s t^{-2} -\dots -s t^{-n} -\cdots=
-ts -t^2s -\dots -t^ns -\cdots.$
(This is an example of the phenomenon noted in \S\ref{S.bij},
that given $x\in kG-\{0\},$ in this case $1-t,$
if we want to compute $g\,x^{-1}$ for some $g\in G,$
the member of $\supp(x)$
which behaves like the ``leading term'' of $x$ can depend on $g.)$
So in its action on $1\in k((G)),$ the operator
$(1-t)^{-1}$ ``looks like'' $\sum_{i\geq 0} t^i,$
while in its action on $s,$ it ``looks like'' $\sum_{i<0} -t^i.$

What if we ignore the ordering of $G$ that has allowed us to define
$k((G)),$ and simply calculate in the right $\!kG\!$-module $k^G$?
Then we find that right multiplication by $1-t$ takes
both $\sum_{i\geq 0} t^i$ and $\sum_{i<0} -t^i$ to $1.$
That these both can be true follows from the fact that $1-t$ annihilates
$(\sum_{i\geq 0} t^i) - (\sum_{i<0} -t^i)= \sum_{-\infty}^\infty t^i.$

This suggests that we look at a factor module of
the $\!kG\!$-module $k^G$ by a submodule
containing $\sum_{-\infty}^\infty t^i.$
Then the question ``$\sum_{i\geq 0} t^i$ or $\sum_{i<0} -t^i$?''
disappears -- these expressions represent the same element.
So perhaps, for a general right ordered group $G,$ we should,
in place of $k((G)),$ look at a module obtained
by dividing $k^G$ by some submodule of ``degenerate'' elements.
But it is not clear how to find such a factor module with
good properties; in particular,
how the right orderability of $G$ would be used.

\subsection{Partitioning $G\!$}\label{SS.partition_by_action}
Suppose $G$ is a right ordered group and $S$ a finite subset of $G.$
We have seen that for an element $g\in G,$ the ordering on
$gS\subseteq G$ need not be the one induced by the ordering
of $S\subseteq G;$ or to put it another way, the ordering $\leq_g$
on $S$ defined by $s\leq_g t \iff gs\leq gt$ can depend on $g.$
(We remark that since our ordering on $G$ is
right-invariant, the relation $gs\leq gt$ is
equivalent to $gsg^{-1}\leq gtg^{-1},$ i.e., to the
result of conjugating the given ordering on $G$ by $g.)$

However, since $S$ is finite, there are only finitely many
orderings on $S;$ so suppose we classify the elements $g\in G$
according to the restriction to $S$ of the ordering $\leq_g.$
If, for each total ordering $\preceq$ on $S,$ we define the subset
$G_\preceq=\{g\in G\mid ({\leq_g}|_S)=\,\preceq\},$
and write $k((G))$ as $\bigoplus_\preceq\,k((G_\preceq)),$
i.e., as the direct sum of the subspaces of elements
having supports in the various subsets $G_\preceq,$ then
we might expect multiplication by an element of $kG$ with
support in $S$ to be ``well-behaved'' on each summand of
this decomposition.

Unfortunately, some things we might hope for do not hold.
For instance, the subset $G_\preceq$ of $G$ containing $1,$ namely
the one for which $\preceq$ is the ordering of $S$ induced
by its inclusion in $G,$ need not be closed under multiplication.

To get an example of this, let us start with a free abelian group
on two generators $y$ and $z,$ and formally write $z$
as $y^\omega$ where $\omega$ denotes a primitive cube root
of unity, so that we can write the general element $y^i z^j$
of this groups as $y^{i+\omega j}$ $(i,j\in\Z).$
Now let $G$ be the extension of that abelian group by an element $x$
which acts by
\begin{equation}\begin{minipage}[c]{35pc}\label{d.y^*wh}
$y^h x\ =\ x\,y^{\omega h}$ \quad for $h\in\Z[\omega]$
\end{minipage}\end{equation}
(compare~\eqref{d.ts=}).
Since the group generated by the commuting
elements $y$ and $y^\omega$ is right orderable,
as is the group generated by $x,$ the same is true of the
extension group $G$ \cite[statement 3.7]{Conrad}.
(On the other hand, the fact that the group generated by $y$ and
$y^\omega$ has no ordering invariant under the action of $x$ implies
that $G$ has no two-sided invariant ordering.)
Let us fix a right-invariant ordering $\leq$ on $G.$
Note that the three elements $y,$ $y^\omega,$
$y^{\omega^2}$ of the orbit of $y$ under conjugation
by $\lang x\rang$ have product $1.$
From the fact that the positive cone of $\leq$ is closed
under multiplication, it follows that these three elements
are not all on the same side of $1$ with respect to $\leq;$ so two of
them must be on one side and the third on the other.
Thus, one of these three elements must have the
property that it stays on the same side of $1$ under
conjugation by $x,$ but moves to the opposite side
under conjugation by $x^2.$
Calling the member of $\{y,y^\omega,y^{\omega^2}\}$
which has this property $s,$ letting $S=\{1,s\},$
and letting $\preceq$ be the ordering on $S$ induced by
its inclusion in $G,$ we see that $1,x\in G_\preceq,$
but $x^2\notin G_\preceq.$

So it does not look easy to put this decomposition of $G$ to use.

Let us note a common generalization of the right-ordered
groups described by~\eqref{d.ts=} and~\eqref{d.y^*wh}.
Let $\Z[c,c^{-1}]$
be the subring of the complex numbers generated by a fixed nonzero
complex number $c$ and its inverse, and let us write the additive group
of $\Z[c,c^{-1}]$ multiplicatively as $y^{\Z[c,c^{-1}]}.$
Let $G$ be the extension of this group by the infinite cyclic
group generated by an elements $x,$ with the action
\begin{equation}\begin{minipage}[c]{35pc}\label{d.y^hx}
$y^h x\ =\ x\,y^{ch}$\quad for $h\in \Z[c,c^{-1}].$
\end{minipage}\end{equation}
We may order $G$ by letting
\begin{equation}\begin{minipage}[c]{35pc}\label{d.x^ny^h>}
$y^h x^n\ \geq\ y^{h'} x^{n'} \iff
\left \{ \begin{array}{cl}
\mbox{either} & n>n',\\[.2 em]
\mbox{or} & n=n'\ \ \mbox{and}\ \r{Re}(h)>\r{Re}(h'),\\[.2 em]
\mbox{or} & n=n',\ \, \r{Re}(h)=\r{Re}(h'),
\ \, \mbox{and}\ \, \r{Im}(h)\geq\r{Im}(h')\,.
\end{array} \right.$
\end{minipage}\end{equation}
(Cf.~\eqref{d.st}.)
In this situation, if $c$ has the form $e^{\alpha \pi i}$
$(\alpha\in\mathbb{R}),$ then for any $S$ with more than one element,
and any ordering $\preceq$ of $S$ such that $G_\preceq$ is
nonempty, it is not hard to show that $\{n\in\Z\mid x^n\in G_\preceq\}$
is periodic (invariant under some nonzero additive translation
on $\Z)$ if and only if $\alpha$ is rational.
So in the irrational case, the sets $G_\preceq$
are particularly messy.

\subsection{One case that would imply the general result we want}\label{SS.ord_aut_R}
Yves de~Cornulier (personal communication)
has pointed out that to prove embeddability
of $kG$ in a division ring for every right-orderable group $G,$
we `merely' need to prove this for $G$
the group of order-automorphisms of the ordered set of real numbers,
or, alternatively, for $G$ the order-automorphisms of the ordered
set of rationals.
For it is known \cite[Proposition~2.5]{Linnell} that any {\em countable}
right orderable group can be embedded in each of those groups; hence
if one of those two group algebras were embeddable in a division ring,
then for any right-orderable group $G,$ all of its finitely generated
subgroups $G_0$ would have group algebras $kG_0$ embeddable in
division rings, and
from this, a quick ultraproduct argument would give the embeddability
of $kG$ itself in a division ring.

\subsection{Can we use lattice-orderability?}\label{SS.lat_ord}
Recall the fact mentioned at the end of~\S\ref{S.intro},
that the one-sided-orderable groups are the groups embeddable,
group-theoretically, in lattice-ordered groups.
So what we want is equivalent to saying that
group algebra $kG$ of every lattice-ordered group $G$ is embeddable in
a division ring.
The partial ordering of a lattice-ordered group
is required to be invariant under both right and left translations,
and it is tempting to hope that we should be able to
construct a division ring of formal infinite sums whose
supports in $G$ have some nice property with respect to such
a lattice ordering.

However, note that any lattice-ordered group $G$ can be embedded
group-theoretically, by the diagonal map, in the lattice-ordered
group $G\times G^\r{\,op},$ where $G^\r{\,op}$
is the group $G$ with its partial order relation reversed.
Since the subgroup of $G\times G^\r{\,op}$ given by the
image of this embedding is an antichain, it is hard
to see how the order structure can be used to pick out a
class of infinite sums that would form a division ring and
contain that diagonal subring.

But one might be able to go somewhere with this idea -- perhaps
defining a permissible infinite sum not just in terms of
order relations among the elements of its support,
but using the sublattice generated by that support.
(Incidentally, the lattice structure of a lattice-ordered group is
always distributive \cite[Corollary~3.17]{Darnel}.)

\section{Appendix on prime matrix ideals}\label{S.PMC_versions}

Let us recall P.\,M.\,Cohn's approach to
maps of rings into division rings, which we sketched in \S\ref{S.intro}.
It is based on

\begin{definition}[\cite{FRR}, \cite{SF}, \cite{FRR+}]\label{D.PMC_sing_ker}
Let $f:R\to D$ be a homomorphism from a ring into a division ring.
Then the {\em singular kernel} $\Pm$ of $f$ is the
set of square matrices
over $R$ whose images under $f$ are singular matrices over $D.$
\end{definition}

Cohn shows that in the above situation, the structure of the division
subring of $D$ generated by $f(R)$ is determined by $\Pm$
(\cite{FRR}, \cite{SF}, \cite{FRR+}; see also \cite{Malcolmson1}),
and he notes that $\Pm$ has
properties~\eqref{d.PMC_nonfull}-\eqref{d.PMC_prime} below.

Let me explain in advance the notation
of~\eqref{d.PMC_nabla_col} and~\eqref{d.PMC_nabla_row}.
If $A$ and $B$ are square matrices of the same
size, which agree except in their
$\!r\!$-th row, or agree except in their
$\!r\!$-th column, then $A\nabla B$ is defined to be the matrix
which agrees with $A$ and $B$ in all rows or columns but the $\!r\!$-th,
and has for $\!r\!$-th row or column the sum of those
rows or columns of $A$ and $B.$
The specification of whether rows or columns are involved, and of
the $r$ in question, is understood to be determined by context.
Cohn calls $A\nabla B$
the {\em determinantal sum} of $A$ and $B,$ in view
of the expression, when $R$ is commutative,
for the determinant of that matrix.

Here, now, are the properties of the singular kernel $\Pm$ of
a homomorphism of $R$ into a division ring used by Cohn:
\begin{equation}\begin{minipage}[c]{35pc}\label{d.PMC_nonfull}
$\Pm$ contains every square $n\times n$ matrix that can be
written as the product of an $n\times n{-}1$ matrix and
an $n{-}1\times n$ matrix over $R.$
(Cohn calls such products {\em non-full} matrices.)
\end{minipage}\end{equation}
\begin{equation}\begin{minipage}[c]{35pc}\label{d.PMC_(+)}
If $A$ is a matrix lying in $\Pm,$ and $B$ is {\em any}
square matrix over $R,$ then $\Pm$ contains the matrix
$\left(\begin{matrix} A\ 0 \\
0\ B \end{matrix}\right),$ denoted $A\oplus B.$
\end{minipage}\end{equation}
\begin{equation}\begin{minipage}[c]{35pc}\label{d.PMC_nabla_col}
If $\Pm$ contains square $n\times n$ matrices $A$ and $B$ which agree
except in the $\!r\!$-th column for some $r,$ then it contains their
determinantal sum $A\nabla B$ with respect to that column.
\end{minipage}\end{equation}
\begin{equation}\begin{minipage}[c]{35pc}\label{d.PMC_nabla_row}
If $\Pm$ contains square $n\times n$ matrices $A$ and $B$ which agree
except in the $\!r\!$-th row for some $r,$ then it contains their
determinantal sum $A\nabla B$ with respect to that row.
\end{minipage}\end{equation}
\begin{equation}\begin{minipage}[c]{35pc}\label{d.PMC_(+)1}
If $\Pm$ contains a matrix of the form $A\oplus 1,$ where
$1$ denotes the $1\times 1$ matrix with entry $1,$ and where
$\oplus$ is defined as in~\eqref{d.PMC_(+)}, then $A\in\Pm.$
\end{minipage}\end{equation}
\begin{equation}\begin{minipage}[c]{35pc}\label{d.PMC_1}
The $1\times 1$ matrix $1$ is not in $\Pm.$
\end{minipage}\end{equation}
\begin{equation}\begin{minipage}[c]{35pc}\label{d.PMC_prime}
If $A\oplus B\in\Pm,$ then $A\in\Pm$ or $B\in\Pm.$
\end{minipage}\end{equation}

In~\cite{FRR},~\cite{FRR+}, and many other works,
Cohn calls a set of square matrices over a ring $R$ which
satisfies~\eqref{d.PMC_nonfull}-\eqref{d.PMC_(+)1} a
{\em matrix ideal}, and calls a matrix ideal which
also satisfies~\eqref{d.PMC_1} and~\eqref{d.PMC_prime} {\em prime}.
He proves that for every prime matrix ideal $\Pm$ of
$R,$ the ring gotten by universally adjoining to $R$ inverses
to all matrices not in $\Pm$ is a local ring, whose residue
ring is a division ring $D$ such that the singular kernel
of the induced map $R\to D$ is precisely $\Pm$
\cite[Theorem~7.4.3]{FRR+}.
Thus since, as mentioned, the singular kernel of a
map $f:R\to D$ determines the division subring generated by the
image of $R,$ it follows
that homomorphisms from $R$ into division rings generated
by the images of $R$ are, up to isomorphisms making
commuting triangles with those homomorphisms,
in bijective correspondence with prime matrix ideals of $R.$
We see from Definition~\ref{D.PMC_sing_ker}
that the homomorphism $R\to D$ corresponding to $\Pm$ is
one-to-one if and only if $\Pm$ contains no nonzero $1\times 1$ matrix.

However, in~\cite[\S4.4]{SF}, Cohn defines matrix ideals by
conditions~\eqref{d.PMC_nonfull}-\eqref{d.PMC_nabla_col}
and~\eqref{d.PMC_(+)1}, omitting~\eqref{d.PMC_nabla_row},
again calling such a matrix ideal prime
if~\eqref{d.PMC_1} and~\eqref{d.PMC_prime} hold.
(He notes at \cite[p.\,164, two lines after display~(30)]{SF}
that one can similarly define determinantal sums with respect
to rows, ``but this will not be needed''.)
He claims to prove, under this definition, the same result
cited above, that the prime matrix ideals
are precisely the singular kernels of homomorphisms to division rings.
This, together with the corresponding result proved
using the stronger definition, would imply that the two
definitions of prime matrix ideal are equivalent.

Now the shortened definition of prime matrix ideal
would lend itself to an approach similar to the one we took
in~\S\ref{S.M}.
Namely, given a right $\!R\!$-module $M,$ we could
for each $n\geq 0$ consider the $n\times n$ matrices over $R$
which act non-injectively on $M^n,$ verify that these together
satisfy {\em most} of the conditions to form a prime matrix ideal
(details below), and examine
when they satisfy the remaining conditions.
But this would be more difficult if we used the
definition appearing in most of Cohn's work on this
subject, containing condition~\eqref{d.PMC_nabla_row}.

Unfortunately, I have difficulty verifying
one of the steps in the proof in~\cite{SF} that prime
matrix ideals, defined without condition~\eqref{d.PMC_nabla_row},
yield homomorphisms to division rings.
Fortunately, Peter Malcolmson has been able to supply an
argument, which with his permission I give
below, showing that in the stronger definition of prime
matrix ideal, condition~\eqref{d.PMC_nabla_row} can be
replaced by a condition that {\em is} easily verifiable
for the set of matrices that act non-injectively on product
modules $M^n$ for a right $\!R\!$-module $M.$

Let me first sketch, for the reader with~\cite{SF} in
hand, my problem with the development given there.
It concerns the assertion in the middle of p.\,164 that the operation
$\odot$ on square matrices
introduced on that page respects equivalence classes under
the equivalence relation $\sim$ defined on p.\,163.
That equivalence relation is generated by three sorts
of operations on matrices:
certain operations of left multiplication by elementary matrices,
certain operations of right multiplication by elementary matrices,
and certain operations of deleting rows and columns.
If we have $a_1\sim a_2$ via a left multiplication operation,
or via the deletion operation, it is indeed straightforward
that $a_1\odot b\sim a_2\odot b$ via the same operation;
but if $a_1\sim a_2$ via a right multiplication operation,
I don't see why $a_1\odot b\sim a_2\odot b$ should hold.
Similarly, if $b_1\sim b_2$ via a right multiplication operation
or a deletion operation, I have no problem,
but if they are related via a left multiplication operation,
I don't see that $a\odot b_1\sim a\odot b_2.$

Here, however, is Malcolmson's result.

\begin{lemma}[P.\,Malcolmson, personal communication]\label{L.PM}
Let $R$ be a ring, and $\Pm$ a set of square matrices over $R$
satisfying~\eqref{d.PMC_nonfull}-\eqref{d.PMC_nabla_col}
and \eqref{d.PMC_(+)1}.
Then $\Pm$ also satisfies~\eqref{d.PMC_nabla_row} if and only
if it satisfies
\begin{equation}\begin{minipage}[c]{35pc}\label{d.I+eij}
For each $n>0,$ the set of $n\times n$ matrices in $\Pm$
is closed under left multiplication by matrices $I_n\pm e_{ij}$
$(i\neq j).$
\end{minipage}\end{equation}
\end{lemma}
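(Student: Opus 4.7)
I would split the lemma into its two implications.

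\emph{Forward direction.} Assuming \eqref{d.PMC_nabla_row}, take $A\in\Pm$ and $i\neq j$, and form $A'$ by replacing row $i$ of $A$ with $\pm$(row $j$ of $A$). I claim $A'$ is non-full: writing $F$ for the $(n-1)\times n$ matrix obtained by deleting row $i$ of $A'$, and $E$ for the $n\times(n-1)$ matrix whose $k$-th row is the standard basis vector $e_{k'}$ for $k\neq i$ and $\pm e_{j'}$ for $k=i$ (with $k'$ the index after the deletion), a direct check gives $A'=EF$. Hence $A'\in\Pm$ by \eqref{d.PMC_nonfull}. Since $A$ and $A'$ agree in every row except row $i$, the row-$i$ determinantal sum $A\nabla_i A'$ equals $(I_n\pm e_{ij})A$, which then lies in $\Pm$ by \eqref{d.PMC_nabla_row}.

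\emph{Converse.} Assuming \eqref{d.I+eij}, I first observe that \eqref{d.PMC_nabla_col} together with \eqref{d.PMC_nonfull} yields closure of $\Pm$ under arbitrary column operations ``column $i \mapsto $ column $i +$ (column $j$)$\cdot r$'' for $r\in R$, $i\neq j$: the auxiliary matrix agreeing with the original except with column $i$ replaced by (column $j$)$\cdot r$ has one column a right-multiple of another, hence is non-full. Given $A,B\in\Pm$ agreeing except in row $r$, signed row permutations built from $(I+e_{ij})(I-e_{ji})(I+e_{ij})$ reduce matters to the case $r=1$; write $A=\begin{pmatrix} a^T \\ C \end{pmatrix}$, $B=\begin{pmatrix} b^T \\ C \end{pmatrix}$. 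Starting from $A\oplus 1\in\Pm$ and applying ``column $j \mapsto $ column $j +$ (column $n+1$)$\cdot b_j$'' for $j=1,\dots,n$ yields $\begin{pmatrix} A & 0 \\ b^T & 1 \end{pmatrix} \in\Pm$. Adjoining another $1$, applying $I+e_{1,n+1}$ (row $1\mathrel{+}{=}$ row $n+1$) and then $I-e_{n+1,1}$ (row $n+1\mathrel{-}{=}$ row $1$), and peeling off the decoupled trailing $\oplus 1$ via \eqref{d.PMC_(+)1} produces $M = \begin{pmatrix} A\nabla_1 B & e_1 \\ -a^T & 0 \end{pmatrix} \in \Pm$; the symmetric construction starting from $B$ gives $\widetilde M = \begin{pmatrix} A\nabla_1 B & e_1 \\ -b^T & 0 \end{pmatrix} \in \Pm$.

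\emph{Extracting the conclusion and the main obstacle.} The matrices $M$ and $\widetilde M$ agree in rows $1$ through $n$ but differ only in row $n+1$. I would form $M\oplus \widetilde M\in\Pm$ and use a combination of column operations interleaving the two $(n+1)\times(n+1)$ blocks with signed row operations from \eqref{d.I+eij} to reach a matrix of the form $(A\nabla_1 B)\oplus N$ for some auxiliary $N$ whose triangular/unit structure allows repeated application of \eqref{d.PMC_(+)1} to peel it off. The hard part is precisely this last step: the naive move of using column $n+1$ of $M$ to cancel $a^T+b^T$ in its first row also undoes the row operation that installed that vector, leaving only a signed permutation of $A$; the essential reason to bring in \emph{both} $M$ and $\widetilde M$ must be that the row-$(n+1)$ discrepancy between them, $-a^T$ versus $-b^T$, equals the negative of the row-$1$ difference between $A$ and $B$, and this coincidence should allow the $\pm 1$-coefficient row operations supplied by \eqref{d.I+eij} to implement the separation that would otherwise require row operations with arbitrary coefficients from $R$. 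I expect Malcolmson's argument to turn on exactly this cancellation being achievable via $\pm 1$ combinations once the two auxiliary matrices are stacked into a common direct sum.
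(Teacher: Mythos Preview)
Your forward direction is correct, and in fact more self-contained than the paper's, which simply cites the standard fact that a matrix ideal (with both \eqref{d.PMC_nabla_col} and \eqref{d.PMC_nabla_row}) is closed under left and right multiplication by arbitrary square matrices.

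Your converse, however, is not finished, and the place where you stop is a genuine obstruction rather than a bookkeeping detail. Your matrices $M$ and $\widetilde M$ agree except in their last \emph{row}; that is exactly an instance of the condition \eqref{d.PMC_nabla_row} you are trying to prove, so you have reduced the problem to another copy of itself. Passing to $M\oplus\widetilde M$ does not break this circularity: any sequence of column operations and $\pm 1$ row operations that would isolate $A\nabla_1 B$ from the remaining block is, after unwinding, tantamount to performing a row-determinantal sum on the pair $(M,\widetilde M)$. There is no visible ``unit/triangular'' auxiliary block $N$ waiting to be peeled off.

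The idea you are missing, and which the paper exploits, is to arrange the two auxiliary $(n{+}1)\times(n{+}1)$ matrices to differ in a single \emph{column}, so that \eqref{d.PMC_nabla_col} can be applied directly. Working with the differing row placed last (so $A=\left(\begin{smallmatrix}X\\a\end{smallmatrix}\right)$, $B=\left(\begin{smallmatrix}X\\b\end{smallmatrix}\right)$), the paper uses your column-operation observation (packaged as the equivalence $\left(\begin{smallmatrix}P&0\\0&Q\end{smallmatrix}\right)\in\Pm \Leftrightarrow \left(\begin{smallmatrix}P&0\\C&Q\end{smallmatrix}\right)\in\Pm$) to produce the \emph{asymmetric} pair
\[
\begin{pmatrix} X & 0\\ a & 0\\ b & 1\end{pmatrix},\qquad
\begin{pmatrix} X & 0\\ b & 0\\ -a{-}b & 1\end{pmatrix}\ \in\ \Pm.
\]
One application of $I+e_{n,n+1}$ to the first, and a signed transposition of the last two rows in the second, yield
\[
\begin{pmatrix} X & 0\\ a{+}b & 1\\ b & 1\end{pmatrix},\qquad
\begin{pmatrix} X & 0\\ a{+}b & -1\\ b & 0\end{pmatrix},
\]
which now agree in every column but the last. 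A single use of \eqref{d.PMC_nabla_col} gives $\left(\begin{smallmatrix}X&0\\a+b&0\\b&1\end{smallmatrix}\right)\in\Pm$, and then your column-operation step removes the $b$ and \eqref{d.PMC_(+)1} peels off the trailing $1$. The crucial choice is the lower-left entry $-a{-}b$ (rather than the symmetric $-a$ that your construction produces): it is precisely what makes the $(n,j)$-entries of the two matrices match after the row moves, pushing all discrepancy into the final column. Your symmetric construction cannot achieve this, which is why you end up with a row discrepancy instead.
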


\begin{proof}
``Only if'' follows from \cite[2nd ed., point (f) on p.\,398]{FRR},
which shows that a set $\Pm$ of square matrices
satisfying conditions~\eqref{d.PMC_nonfull}-\eqref{d.PMC_(+)1}
(there called M.1-M.4, with M.3 being the conjunction
of~\eqref{d.PMC_nabla_col} and~\eqref{d.PMC_nabla_row})
is closed under right and left multiplication by
arbitrary square matrices.
Below, we shall prove ``if\,''; so assume~\eqref{d.I+eij} holds.

By a familiar calculation, the group generated by the
elementary matrices $I+e_{ij}$ and their inverses
$I-e_{ij}$ contains the matrices whose
left actions transpose an arbitrary pair of rows, changing
the sign of one of them.
(The essence of that calculation is the $2\times 2$ case,
$\left(\begin{matrix} 1 & 0 \\
1 & 1 \end{matrix}\right)
\left(\begin{matrix} 1 & \!{-}1 \\
0 & 1 \end{matrix}\right)
\left(\begin{matrix} 1 & 0 \\
1 & 1 \end{matrix}\right)=
\left(\begin{matrix} 0 & \!{-}1 \\
1 & 0 \end{matrix}\right).)$
This will be a key tool later on, but let us first
use it in a trivial way: it allows us to reduce to the case where
the row with respect to which we want to show closure
under determinantal sums is the last row of our matrices.
(That reduction also uses the observation that the operation of
determinantal sum with respect to any row respects the operation of
reversing the sign of a particular row in all matrices.)

Another fact we shall use is that if $\Pm$ is a set of square
matrices satisfying~\eqref{d.PMC_nonfull} and~\eqref{d.PMC_nabla_col},
$A$ an $n\times n$ matrix,
$B$ an $n'\times n'$ matrix, and
$C$ an $n'\times n$ matrix, then
\begin{equation}\begin{minipage}[c]{35pc}\label{d.low_left}
$\Pm$ contains
$\left(\begin{matrix} A & 0 \\
0 & B \end{matrix}\right)$
if and only if it contains
$\left(\begin{matrix} A & 0 \\
C & B \end{matrix}\right).$
\end{minipage}\end{equation}
This can be seen from point~(e) on p.\,397
of \cite[2nd edition]{FRR}.
(Although both~\eqref{d.PMC_nabla_col} and~\eqref{d.PMC_nabla_row} are
assumed there, only the former is used in the calculation.)

Now to prove the ``if'' direction of our lemma, let $H$ be an
$n{-}1\times n$ matrix over $R,$ and $a,$ $b$ length-$\!n\!$
rows such that
\begin{equation}\begin{minipage}[c]{35pc}\label{d.XaXb}
$\left(\begin{matrix} X \\
a \end{matrix}\right),
\ \left(\begin{matrix} X \\
b \end{matrix}\right)\in\Pm.$
\end{minipage}\end{equation}
Applying~\eqref{d.PMC_(+)}, we get
$\left(\begin{matrix} X & 0 \\
a & 0 \\
0 & 1\end{matrix}\right),
\ \left(\begin{matrix} X & 0 \\
b & 0 \\
0 & 1\end{matrix}\right)\in\Pm.$
Applying~\eqref{d.low_left} to these matrices, we get
$\left(\begin{matrix} X & 0 \\
a & 0 \\
b & 1\end{matrix}\right),
\ \left(\begin{matrix} X & 0 \\
b & 0 \\
{-}a{-}b & 1\end{matrix}\right)\in\Pm.$
If we left-multiply the first of those
two matrices by $I_n+e_{n-1,n},$ we get
$\left(\begin{matrix} X & 0 \\
a{+}b & 1 \\
b & 1\end{matrix}\right)\in\Pm,$ while if we left multiply the
second by a product of elementary matrices that
transposes the last two rows and changes the sign of one of
them, we get $\left(\begin{matrix} X & 0 \\
a{+}b & -1 \\
b & 0 \end{matrix}\right)\in\Pm.$

These two matrices differ only in their last column, and
applying~\eqref{d.PMC_nabla_col} to their
determinantal sum with respect to that column
gives $\left(\begin{matrix} X & 0 \\
a{+}b & 0 \\
b & 1 \end{matrix}\right)\in\Pm.$
Applying~\eqref{d.low_left} again, this gives
$\left(\begin{matrix} X & 0 \\
a{+}b & 0 \\
0 & 1 \end{matrix}\right)\in\Pm,$
hence by~\eqref{d.PMC_(+)1},
$\left(\begin{matrix} X \\
a{+}b \end{matrix}\right)\in\Pm.$
Having gotten this from~\eqref{d.XaXb}, we have
proved the case of~\eqref{d.PMC_nabla_row} where $r=n,$
which we have seen is equivalent to the general case.
\end{proof}

We can now obtain a result parallel to Lemma~\ref{L.cl_fr_M}.
As in the context of that lemma, elements of $M^n$ will be regarded as
row vectors, on which $n\times n'$ matrices over $R$ act on the right.
(Thus, the kernel $K$ referred to in~\eqref{d.PMC_nabla_col_if} below
is not, in general, an
$\!R\!$-submodule of $M^n,$ but merely an additive subgroup.)

\begin{lemma}\label{L.non-f+row-sum}
Let $M$ be a nonzero right module over a ring $R,$ and $\Pm$ the set of
square matrices $A$ over $R$ such that, if $A$ is $n\times n,$
$A$ gives a non-injective map $M^n\to M^n.$
Then\vspace{.5em}

\textup{(i)} $\Pm$ satisfies~\eqref{d.PMC_(+)}, \eqref{d.PMC_(+)1},
\eqref{d.PMC_1}, \eqref{d.PMC_prime}, and~\eqref{d.I+eij}.\vspace{.5em}

\textup{(ii)} A necessary and sufficient condition for
$\Pm$ to satisfy~\eqref{d.PMC_nonfull} is
\begin{equation}\begin{minipage}[c]{35pc}\label{d.PMC_nonfull_iff}
No $n\times n{-}1$ matrix over $R$ induces an injection of
abelian groups $M^n\to M^{n-1}$ $(n>0).$
\end{minipage}\end{equation}
\vspace{-1em} 

\textup{(iii)}
A sufficient condition for $\Pm$ to satisfy~\eqref{d.PMC_nabla_col}
is
\begin{equation}\begin{minipage}[c]{35pc}\label{d.PMC_nabla_col_if}
If $K\subseteq M^n$ is the kernel of the
action on $M^n$ of an $n\times n{-}1$ matrix over $R,$ then
either \textup{(a)}~every
map $M^n\to M$ which is induced by a height-$\!n\!$ column vector
over $R,$ and is nonzero on $K,$ is one-to-one on $K,$
or \textup{(b)}~no such map is one-to-one on $K.$
\end{minipage}\end{equation}

Thus, if both~\eqref{d.PMC_nonfull_iff} and~\eqref{d.PMC_nabla_col_if}
hold, then $\Pm$ is a prime matrix ideal of $R.$
Hence if, further, the right $\!R\!$-module $M$ is faithful, then $R$
is embeddable in a division ring.
\end{lemma}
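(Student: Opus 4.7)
The plan is to verify each claim of the lemma directly from the module-theoretic definition of $\Pm,$ and then invoke Malcolmson's Lemma~\ref{L.PM} to promote the result to a prime matrix ideal in Cohn's stronger sense, after which Cohn's correspondence supplies the division ring.

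For part~(i), every condition translates routinely. For \eqref{d.PMC_(+)}, a nonzero $a\in M^n$ annihilating $A$ gives the nonzero annihilator $(a,0)\in M^{n+n'}$ of $A\oplus B.$ For \eqref{d.PMC_(+)1}, a nonzero annihilator $(a,m)$ of $A\oplus 1$ has $m=0$ (because $m\cdot 1=m$), so $a$ is a nonzero annihilator of $A.$ Statement \eqref{d.PMC_1} holds because $M\neq 0.$ Condition \eqref{d.PMC_prime} is the contrapositive of the obvious fact that a direct sum of injective actions is injective. For \eqref{d.I+eij}, the matrix $I_n\pm e_{ij}$ acts invertibly on $M^n$ (with inverse $I_n\mp e_{ij}$), so $(I_n\pm e_{ij})A$ has the same kernel on $M^n$ as $A.$

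For part~(ii), sufficiency of \eqref{d.PMC_nonfull_iff} is immediate: any non-full product $PQ$ is non-injective because $P$ already is. For necessity, suppose some $n\times n{-}1$ matrix $P$ acted injectively on $M^n;$ taking $Q=(I_{n-1}\ 0),$ one has $m\cdot PQ=(m\cdot P,\,0),$ so $PQ$ would be injective as well, yet is non-full, contradicting \eqref{d.PMC_nonfull}. For part~(iii), after a column-permutation (which does not change the kernel on $M^n$) assume the column being varied is the last. Writing $A=(C\ a),$ $B=(C\ b),$ and $A\nabla B=(C\ a+b)$ with $C$ of shape $n\times n{-}1,$ and letting $K\subseteq M^n$ be the kernel of the action of $C,$ one checks that membership of $A,$ $B,$ $A\nabla B$ in $\Pm$ is equivalent, respectively, to each of $a,$ $b,$ $a+b$ being non-injective as a map $K\to M.$ Assuming $A,B\in\Pm$ one has $K\neq 0$ (else $C,$ hence $A,$ would already be injective). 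Under case~(a) of \eqref{d.PMC_nabla_col_if}, both $a$ and $b$ must vanish on $K$ (else one would be injective there), so $a+b$ vanishes on the nonzero $K,$ giving $A\nabla B\in\Pm;$ under case~(b), every column-map is already non-injective on $K,$ so certainly $a+b$ is.

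Combining these, under the two hypotheses $\Pm$ satisfies \eqref{d.PMC_nonfull}--\eqref{d.PMC_nabla_col}, \eqref{d.PMC_(+)1}, and \eqref{d.I+eij}; Lemma~\ref{L.PM} then delivers \eqref{d.PMC_nabla_row}, and together with \eqref{d.PMC_1} and \eqref{d.PMC_prime} this makes $\Pm$ a prime matrix ideal in Cohn's stronger sense, so \cite[Theorem~7.4.3]{FRR+} yields a homomorphism $f:R\to D$ into a division ring with singular kernel $\Pm.$ If $M$ is faithful, consider the $n=1$ case of \eqref{d.PMC_nabla_col_if}: here $K=M$ since the $1\times 0$ matrix is the zero map, and case~(b) is excluded because $1\in R$ acts as the identity; so case~(a) forces every $r\in R$ not acting as zero on $M$---equivalently by faithfulness, every nonzero $r$---to act injectively on $M.$ Thus $\Pm$ contains no nonzero $1\times 1$ matrix and $f$ is an embedding. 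The main obstacle is part~(iii), where the two-branched nature of \eqref{d.PMC_nabla_col_if} must be carefully exploited and the degenerate case $K=0$ disposed of; everything else is essentially bookkeeping together with the appeal to Lemma~\ref{L.PM}.
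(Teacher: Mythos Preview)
Your proof is correct and follows essentially the same approach as the paper's: straightforward verification of~(i), the same ``append a zero column'' trick for~(ii), the same case split on~\eqref{d.PMC_nabla_col_if} for~(iii), and the same appeal to Lemma~\ref{L.PM} and Cohn's correspondence at the end. Your treatment of the faithfulness step is in fact more explicit than the paper's---you correctly note that faithfulness alone does not force nonzero $r$ to act injectively, and you extract that from the $n=1$ case of~\eqref{d.PMC_nabla_col_if}, whereas the paper simply cites the results of~\cite{FRR} without spelling this out.
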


\begin{proof}
All parts of~(i) are straightforward.
(Condition \eqref{d.I+eij} is a special case of the observation that
$\Pm$ is closed under left and right multiplication by arbitrary
invertible matrices.)

(ii) is also easy:
Assume first that $\Pm$ satisfies~\eqref{d.PMC_nonfull}.
If $A$ is an $n\times n{-}1$ matrix over $R,$ then extending $A$
by a zero column, we get an $n\times n$ matrix $A'$
which is non-full in the sense stated in~\eqref{d.PMC_nonfull},
hence by~\eqref{d.PMC_nonfull} lies in $\Pm,$ hence, by our choice of
$\Pm,$ is not one-to-one on $M^n.$
Hence $A$ is not one-to-one there, proving~\eqref{d.PMC_nonfull_iff}.
Conversely, if $A$ is a non-full $n\times n$ matrix,
say $A=BC$ where $B$ is $n\times n{-}1$ and $C$ is $n{-}1\times n,$
then assuming~\eqref{d.PMC_nonfull_iff}, $B$ acts on $M^n$ with
nonzero kernel, hence so does $A,$ so $A\in\Pm.$

To prove (iii), let $A,B\in\Pm$ be as in~\eqref{d.PMC_nabla_col},
$C$ the common $n\times n{-}1$ submatrix obtained by deleting
the $\!r\!$-th columns from these, and $K$
the kernel of the action of $C$ on $M^n.$
From the fact that $A,B\in\Pm,$ we see that $K\neq\{0\}.$
Now if, as in the first alternative of~\eqref{d.PMC_nabla_col_if},
every map $M^n\to M$ induced by a height-$\!n\!$ column
vector restricts to either the zero map or a one-to-one map on $K,$
then for $A$ and $B$ to lie in $\Pm,$
their $\!r\!$-th columns must both induce the zero map
on $K,$ hence so will the sum of those columns,
showing (since $K\neq\{0\})$ that $A\nabla B$ lies in $\Pm.$
On the other hand, if {\em no} height-$\!n\!$ column vector induces a
one-to-one map on $K,$ then in particular, the
$\!r\!$-th column of $A\nabla B$ does not, giving the same conclusion.

To see the first sentence of the last paragraph of the lemma,
note that~(i),~(ii) and~(iii) give us all of
\eqref{d.PMC_nonfull}-\eqref{d.PMC_prime}
except~\eqref{d.PMC_nabla_row}, and that is given to us by
Lemma~\ref{L.PM}, since~(i) includes~\eqref{d.I+eij}.
The final sentence follows by the results of \cite{FRR} cited earlier.
\end{proof}

Remark: The converse of~(iii) above is not true;
i.e., $\Pm$ can satisfy~\eqref{d.PMC_nabla_col}
without satisfying~\eqref{d.PMC_nabla_col_if}.
For example, suppose $R=\Z$ and $M$ is the module $\Z/p^2\Z$
for some prime $p.$
It is not hard to see that the $\Pm$ of Lemma~\ref{L.non-f+row-sum}
will consist of the square matrices over $\Z$ whose
determinants are divisible by $p.$
This is the prime
matrix ideal corresponding to the homomorphism
of $\Z$ into the field $\Z/p\Z,$ so
in particular, it satisfies~\eqref{d.PMC_nabla_col}.
On the other hand, for any $n\geq 1,$ the subgroup
$K=\{0\}^{n-1}\times M\subseteq M^n$ is easily seen to be
the kernel of the action
of an $n\times n{-}1$ matrix; but if we take a height-$\!n\!$
column vector with $1$ in the $\!n\!$-th position, and
another with $p$ in that position, then both
are nonzero on $K,$ but the former is one-to-one while the
latter is not; so~\eqref{d.PMC_nabla_col_if} fails.\vspace{.4em}

On a general note,
the above approach to obtaining homomorphisms into division
rings from modules may be thought of as less convenient than the one
developed in \S\ref{S.M}, in that it leaves us the two
conditions~\eqref{d.PMC_nonfull_iff} and~\eqref{d.PMC_nabla_col_if}
to verify, in contrast to the one condition~\eqref{d.cl_exch_iff}
(with equivalent
forms~\eqref{d.cl_exch_iff_mx-},~\eqref{d.cl_exch_iff_mx}).
But it is, in another way, more robust,
in that the concept of prime matrix ideal is left-right symmetric,
and this allows us to produce a version of the same result based on
surjectivity rather than injectivity {\em without} switching
from right to left modules as we did in that section.
Rather, the switch between injectivity and
surjectivity can be made independently of whether we
use right or left modules.
The next lemma is the result based on right modules and surjectivity;
the two left-module results are obtained from the two right-module
results in the obvious way.
We leave to the reader the proof of the lemma, which
exactly parallels that of Lemma~\ref{L.non-f+row-sum}.

\begin{lemma}\label{L.non-f+col-sum}
Let $M$ be a nonzero right module over a ring $R,$ and $\Pm$ the set of
square matrices $A$ over $R$ such that, if $A$ is $n\times n,$
$A$ gives a {\em non-surjective} map $M^n\to M^n.$
Then\vspace{.5em}

\textup{(i)} $\Pm$ satisfies~\eqref{d.PMC_(+)}, \eqref{d.PMC_(+)1},
\eqref{d.PMC_1}, \eqref{d.PMC_prime}, and
the left-right dual of~\eqref{d.I+eij} \textup{(}closure under
{\em right} multiplication by matrices $I_n\pm e_{ij}).$\vspace{.5em}

\textup{(ii)} A necessary and sufficient condition for
$\Pm$ to satisfy~\eqref{d.PMC_nonfull} is
\begin{equation}\begin{minipage}[c]{35pc}\label{d.PMC_nonfull_iff'}
No $n{-}1\times n$ matrix over $R$ induces a surjection of
abelian groups $M^{n-1}\to M^n$ $(n>0).$
\end{minipage}\end{equation}
\vspace{-1em}

\textup{(iii)}
A sufficient condition for $\Pm$ to satisfy~\eqref{d.PMC_nabla_row}
is
\begin{equation}\begin{minipage}[c]{35pc}\label{d.PMC_nabla_row_if}
If $I\subseteq M^n$ is the image of $M^{n-1}$ under the
action of an $n{-}1\times n$ matrix over $R,$ then
either \textup{(a)}~every
map $M\to M^n$ which is determined by a length-$\!n\!$ row vector
and has image not contained in $I$ has image which, with $I,$
spans the additive group of $M^n,$ or \textup{(b)}~no
such map has image which, with $I,$ spans that additive group.
\end{minipage}\end{equation}

Thus, if both~\eqref{d.PMC_nonfull_iff'} and~\eqref{d.PMC_nabla_row_if}
hold, then $\Pm$ is a prime matrix ideal of $R.$
Hence if, further, every nonzero element of $R$ carries $M$ surjectively
to itself, then $R$ is embeddable in a division ring.\qed
\end{lemma}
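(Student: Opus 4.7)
The plan is to mirror the proof of Lemma~\ref{L.non-f+row-sum} step for step, exchanging the roles of kernels and images (and correspondingly of injectivity and surjectivity). For part~(i): the image of $A\oplus B$ acting on $M^n\oplus M^{n'}$ is the direct sum of the images of $A$ and of $B,$ so $A\oplus B$ is surjective iff both $A$ and $B$ are; this yields \eqref{d.PMC_(+)}, \eqref{d.PMC_(+)1}, and \eqref{d.PMC_prime} simultaneously. The $1\times 1$ identity is clearly surjective, giving \eqref{d.PMC_1}. Finally, the elementary matrices $I_n\pm e_{ij}$ are units of $M_n(R),$ so right multiplication by them is a bijection on $n\times n$ matrices that preserves the surjectivity of the induced map on $M^n.$

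For part~(ii), regard elements of $M^n$ as row vectors, with matrices acting on the right. A non-full $n\times n$ matrix $A=BC$ (with $B$ of shape $n\times(n{-}1)$ and $C$ of shape $(n{-}1)\times n$) acts on $M^n$ as the composite $M^n\to M^{n-1}\to M^n$ induced by $B$ then $C,$ so the image of $A$ lies in the image of the map $M^{n-1}\to M^n$ induced by $C;$ this gives the ``if'' direction. For the converse, given $C$ of shape $(n{-}1)\times n,$ set $B=\bigl(\begin{smallmatrix}I_{n-1}\\0\end{smallmatrix}\bigr);$ then $B$ induces the coordinate-projection $M^n\twoheadrightarrow M^{n-1},$ which is surjective, so $BC$ has the same image on $M^n$ as $C$ does on $M^{n-1},$ and \eqref{d.PMC_nonfull} applied to the non-full matrix $BC$ forces \eqref{d.PMC_nonfull_iff'}. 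For part~(iii), suppose $A,B\in\Pm$ agree outside row $r,$ with $r$-th rows $\alpha$ and $\beta$ respectively; let $D$ be their common $(n{-}1)\times n$ submatrix and $J=M^{n-1}D\subseteq M^n.$ Then the images of $A,$ $B,$ $A\nabla B$ on $M^n$ are $J+M\alpha,$ $J+M\beta,$ and $J+M(\alpha+\beta).$ Apply~\eqref{d.PMC_nabla_row_if} to $J:$ under alternative~(a), neither $M\alpha$ nor $M\beta$ spans with $J$ (lest $A$ or $B$ be surjective), so both lie in $J;$ hence $M(\alpha+\beta)\subseteq J$ and the image of $A\nabla B$ is just $J\subsetneq M^n.$ Under alternative~(b), no row vector produces an image spanning with $J,$ so in particular $J+M(\alpha+\beta)\neq M^n.$ Either way $A\nabla B\in\Pm.$

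At this stage $\Pm$ satisfies every axiom of a prime matrix ideal except possibly~\eqref{d.PMC_nabla_col}. To obtain that final condition, I would invoke the left-right transpose of Lemma~\ref{L.PM}, whose proof is the given one with every matrix operation transposed (``row'' swapped with ``column'', ``left multiplication'' swapped with ``right multiplication''); its hypothesis of closure under right multiplication by $I_n\pm e_{ij}$ was just verified in part~(i). With $\Pm$ now a bona fide prime matrix ideal, Cohn's theorem produces a ring map $R\to D$ whose kernel is $\{r\in R:(r)\in\Pm\},$ and this is trivial under the hypothesis that every nonzero $r$ acts surjectively on $M.$ The one step that demands mild care is checking that the proof of Lemma~\ref{L.PM} really does transpose cleanly; beyond that I expect no substantive obstacle, the whole argument being a routine dualisation of the proof of Lemma~\ref{L.non-f+row-sum}.
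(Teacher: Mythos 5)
Your proposal is correct and is essentially the paper's own proof: the paper leaves this lemma to the reader as "exactly paralleling" the proof of Lemma~\ref{L.non-f+row-sum}, and your dualization of parts (i)--(iii) (images in place of kernels, surjectivity in place of injectivity) followed by an appeal to the left--right dual of Lemma~\ref{L.PM} to recover~\eqref{d.PMC_nabla_col} is precisely that parallel argument. The one point you flag -- that Lemma~\ref{L.PM} transposes cleanly -- is indeed harmless: since transposition over a noncommutative ring is not multiplicative, one either reruns its proof with rows/columns and left/right multiplication interchanged, or passes to $R^{\mathrm{op}},$ under which transposition identifies $n\times n$ matrices over $R^{\mathrm{op}}$ with the opposite of the matrix ring over $R$ and interchanges all the row/column conditions, consistent with the paper's remark that the prime matrix ideal axioms are left--right symmetric.
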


\section{Acknowledgements}\label{S.Ackn}
I am indebted to Pace Nielsen for many invaluable comments and
corrections to earlier drafts of this note,
to A.\,M.\,W.\,Glass
for a helpful discussion of what is known about right-orderable groups,
to Peter Malcolmson for showing the way to Lemma~\ref{L.PM},
and to Yves de~Cornulier for the observation of~\S\ref{SS.ord_aut_R}.

\Needspace{4\baselineskip}

\end{document}